\newtheorem{thm}{Theorem}[section]
\newtheorem{cor}[thm]{Corollary}
\newtheorem{lem}[thm]{Lemma} 
\theoremstyle{definition} 
\theoremstyle{remark}
\newtheorem{rem}[thm]{Remark}
\numberwithin{equation}{section}
\newcommand{\ov}{\overline}
\newcommand{\Om}{\Omega}
\newcommand{\al}{\alpha}
\begin{document}
\author{G. P. Balakumar}
\address{Department of Mathematics,
Indian Institute of Science, Bangalore 560 012, India}
\email{gpbalakumar@math.iisc.ernet.in}
\title{Model domains 
in $\mathbb{C}^3$ with abelian automorphism group}
\keywords{Rigid polynomial domains, Noncompact automorphism group}
\subjclass{32M05, 32M17}
\begin{abstract}
It is shown that every hyperbolic rigid polynomial domain in $\mathbb{C}^3$ of finite type, with abelian automorphism group is equivalent to a domain that is balanced with respect to some weight. 
\end{abstract}
\maketitle

\section{Introduction}
\noindent Let $D$ be a Kobayashi hyperbolic domain in $\mathbb{C}^n$. Then it is known that the group of holomorphic automorphisms of $D$, denoted ${\rm Aut}(D)$ is a real Lie group in the compact open topology of dimension at most $n^2+2n$, the maximum value occurring only when $D$ is biholomorphically equivalent to the ball and that ${\rm Aut}(D)$ is generically compact and furthermore that the space of such domains that are holomorphically distinct is infinite dimensional; this is true even when the domains are constrained to have circular symmetries (see the survey \cite{GK2}, \cite{IK} and the references therein). On the other hand, the classification of domains $D$ whose group of symmetries is topologically large, i.e., with non-compact automorphism group is more tractable. For instance the well-known theorem of Wong \cite{W} says that if $D$ is a smoothly bounded strongly pseudoconvex domain in $\mathbb{C}^n$ with non-compact automorphism group then $D$ must be equivalent to the unit ball $\mathbb{B}^n$. It is a classical result of H. Cartan that the non-compactness of ${\rm Aut}(D)$ is equivalent to the non-compactness of just one orbit (of some point under the natural action of ${\rm Aut}(D)$ on $D$) when $D$ is bounded, i.e., the existence of a point on $\partial D$ at which an orbit accumulates. Rosay observed \cite{R} shortly after Wong's result that it continues to be true even when $\partial D$ is known to be strongly pseudoconvex only near a boundary orbit accumulation point. Subsequent works on this line have only lent credence to this phenomenon of local data regarding $\partial D$ near a boundary orbit accumulation point providing global information about $D$. A progressive series of analogous extensions (\cite{KK}, \cite{BGK}, \cite{KM}) of the Wong-Rosay characterization of the ball leading to its ultimate version as in the finite dimensional situation (\cite{KM}) has been attained in the setting of a separable Hilbert space as well, where Cartan's theorem need no longer hold. A common technique here has been the scaling method which facilitates the construction of a biholomorphism from a smooth pseudoconvex finite type domain in $\mathbb{C}^n$ with non-compact automorphism group to a non-degenerate rigid polynomial domain -- defined below -- at the boundary orbit accumulation point. For instance, using this and an analysis of holomorphic tangent vector fields, it has been shown (see \cite{BP1}, \cite{BER}) that a bounded domain in $\mathbb{C}^2$, which is smooth weakly pseudoconvex and of finite type $2m$, near a boundary orbit accumulation point must be equivalent to its model domain of the form 
\[
\Big\{ (z_1,z_2) \in \mathbb{C}^2 : 2 \Re z_2 + P(z_1, \ov{z}_1) <0 \Big\}
\]
where $P(z_1, \ov{z}_1)$ is a homogeneous subharmonic polynomial of degree $2m$ without harmonic terms. The pseudoconvexity hypothesis on $\partial D$ near the boundary orbit accumulation point was dropped in \cite{BP2} and more recently in \cite{V}. The Greene-Krantz conjecture, very well-known in this area states that a boundary orbit accumulation point must be of finite type. The classification of non-degenerate rigid polynomial domains being model domains for finite type domains thus gains an added special interest.\\  

\noindent A domain of the form
\[
\Big \{ z \in \mathbb{C}^n \; : \; 2 \Re z_n + P('z, '\ov{z}) < 0 \Big\}
\]
where $P$ is a real valued polynomial in $'z=(z_1, \ldots ,z_{n-1})$ and $'\ov{z}=({}' \ov{z}_1, \ldots, ' \ov{z}_{n-1})$, is called a {\it rigid polynomial domain}. It is called {\it non-degenerate} if there is no germ of a complex analytic variety sitting inside its boundary which is equivalent to the domain being of finite type (see \cite{DA2}). Note that a rigid polynomial domain is simply connected as it admits a deformation retract to the graph of $P$, 
\[
 {\rm Gr}(P)= \Big\{ ('z,x_n) \in \mathbb{R}^{2n-1} \; : \; x_n=-P('z,'\ov{z}) \Big\},
\] 
which is homeomorphic to $\mathbb{C}_{'z} = \mathbb{C}^{n-1}$. \\

We begin by investigating the elements $g$ outside the connected component $G^c$ of the identity in $G={\rm Aut}(\Omega)$ when ${\rm dim}(G)=1$, where 
\[
\Omega = \Big\{ z \in \mathbb{C}^3 \; : \; \rho(z)= 2\Re z_3 + P(z_1,z_2, \ov{z}_1, \ov{z}_2) <0 \Big\}
\]
is a model in $\mathbb{C}^3$, i.e., a Kobayashi hyperbolic, non-degenerate, rigid polynomial domain. Rigidity entails $G^c$ to contain the one-parameter group 
\[
T_t(z_1,z_2,z_3) = (z_1,z_2,z_3 +it)
\]
thereby making $G^c$ non-compact. We shall refer to $T_t$, present in ${\rm Aut}(\Omega)$ for all model domains $\Omega$ as the canonical subgroup. The normality of $G^c$ in $G$ entails that for each $t \in \mathbb{R}$ there exists $t'=f(t)\in \mathbb{R}$ such that $g \circ T_t = T_{t'} \circ g$ -- since ${\rm dim}(G)=1$, $G^c=\{T_t\}$. This expands as
\begin{align}
g_1(z_1,z_2,z_3 + it) &= g_1(z_1,z_2,z_3), \nonumber \\
g_2(z_1,z_2,z_3 + it) &= g_2(z_1,z_2,z_3), \text{ and} \nonumber \\
g_3(z_1,z_2,z_3 + it) &= g_3(z_1,z_2,z_3)+if(t). \label{1.1}
\end{align}
The first two of these equations show that $g_1,g_2$ are independent of $z_3$, so that $'g = (g_1,g_2)$ is a function of $z_1,z_2$ alone. It can be seen from the form of $\Omega$ that it surjects onto $\mathbb{C}^2$ under the natural projection $\pi: \mathbb{C}^3 \to \mathbb{C}_{z_1} \times \mathbb{C}_{z_2}$. This implies that $g_1,g_2$ are entire. Now it follows from the third equation in the system \ref{1.1} that $g_3$ must be of the form $a z_3 + \phi(z_1,z_2)$ so that ${\rm Jac}(g(z))=a {\rm Jac}('g({}'z))$ which is invariant under translations in the $z_3$-direction and if non-empty will intersect $\Omega$. Thus, we conclude that ${\rm Jac}(g)$ is nowhere vanishing, hence constant and so $'g \in {\rm Aut}(\mathbb{C}^2)$. The fact that $\Omega$ is a non-degenerate polynomial domain forces $g$ to be algebraic (see for instance theorem 1.2 in \cite{CP}) whose main tools are a reflection principle and Webster's theorem \cite{We}; so $g_1,g_2$ are entire functions of algebraic growth and consequently $'g \in GA_2(\mathbb{C})$, the group of all polynomial automorphisms of $\mathbb{C}^2$. The classification of elements upto conjugacy in $GA_2(\mathbb{C})$ was done by S. Friedlander and J. Milnor in \cite{FrMi} and this is then used to derive information about the form of $g$ and $P$ -- in case where the components of $'g$ are free from constant terms and $P$ from pluriharmonic terms, it follows that
\[
P \circ {}'g =P.
\] 
Even when ${\rm dim}(G)>1$, all the aforementioned arguments, go through for all $g \in G$ for which (\ref{1.1}) holds, i.e., for all those $g \in G$ that belong to the normalizer in $G$ of the canonical subgroup $\{ T_t \}$; in particular for all $g \in G$ when $G$ is abelian (in which case (\ref{1.1}) holds with $f(t)=t$).\\

\noindent Though our results are valid in a somewhat greater generality, we prefer to focus on the case when $G$ is abelian which is the simplest algebraic condition that can be imposed on $G$. It is known \cite{FM} that if the automorphism group of a hyperbolic domain in $\mathbb{C}^n$ is abelian, then its dimension cannot exceed $n$. Thus ${\rm dim}(G)=1$, $2$ or $3$ and it follows that $G^c$ can be realized as a product of $\mathbb{R}$'s and $\mathbb{S}^1$'s. If $S_s$ is a one-parameter subgroup commuting with $T_t$ then 
\begin{align}
S_s^j(z_1,z_2,z_3+it) &= S_s^j(z_1,z_2,z_3) \; \; \text{for} \; j=1,2, \text{ and} \label{1.2}\\
S_s^3(z_1,z_2,z_3+it) &= S_s^3(z_1,z_2,z_3) + it \nonumber
\end{align} 
for all $s,t \in \mathbb{R}$. (1.2) implies that the first two components of $S_s$ are functions of $z_1,z_2$ alone and as before we can also conclude that $S_s \in GA_2(\mathbb{C})$ for each $s \in \mathbb{R}$, indeed that $S_s$ is a one-parameter subgroup of $GA_2(\mathbb{C})$ which has been a well studied group; in particular a classification upto conjugacy of its one-parameter subgroups is available -- determined by H. Bass and G. Meisters in \cite{BM} -- and we work out the consequences of commutativity of the one-parameter subgroups that are factors of $G$, on both the form of $\Omega$ and $G$ as well. The  normal forms of the commuting subgroups derived in this course, is valid even when $G$ itself is not abelian but for any two commuting one parameter subgroups `different' from $T_t$ that lie in the normalizer of the canonical subgroup $T_t$. The characterization of the model domains below, also remain valid.\\

Let us now state the main results -- all the terminology involved is described in the next section. The first one shows the extent to which even the knowledge of elements $ g \in G \setminus G^c$ is enough to place strong restrictions on $P$ and $G$ as well. For instance, if the group generated by $g$ is a copy of $\mathbb{Z}$ then after a change of variables $P=P(\Im z_1,z_2)$ or $P=P(\vert z_1 \vert^2, z_2)$ and correspondingly, $G^c$ must contain another copy of $\mathbb{R}$ or $\mathbb{R} / \mathbb{Z}$. This provides another consequence of the noncompactness of a group of automorphisms of $\Omega$:  allowing the discrete group $G/G^c$ to contain a copy of (the simplest noncompact discrete group) $\mathbb{Z}$, forces the dimension of $G^c$ to be at least  $2$. All the normal forms of the model domains in the thereoms below are arranged to contain the origin in their boundaries.  
\noindent \begin{thm} \label{dim1}
\item[(a)] Suppose $G$ is abelian and $g \in G$. Then after a change of variables, $'g$ is one of the following:
\begin{itemize}
\item[(i)] A unitary map $U$ with eigenvalues $\alpha_1= e^{ i\theta_1}$, $\alpha_2=e^{ i \theta_2}$. In this case, $P$ is balanced with respect to $(\Theta, \mathbb{Z})$. Let $j=1$ or $2$. If $\alpha_j$ is not a root of unity then $P_j$ must be balanced; if $\alpha_j$ is an $N$-th root of unity then $P_j$ must be balanced in $z_j$ with respect to $\mathbb{Z}_N$.
\item[(ii)] An affine transform $(z_1,z_2) \to (z_1+1, \alpha z_2)$ for some $\alpha \in \mathbb{S}^1$ and $P$ is of the form 
\[
P(z_1,z_2) = P_1(\Im z_1) + M(\Im z_1,z_2) + P_2(z_2)
\]
where $M$ and $P_2$ are balanced in $z_2$ with respect to $\mathbb{Z}_N$, in case $\alpha$ is an $N$-th root of unity and ${\rm dim}(G) \geq 2$; in case $\alpha$ is not a root of unity, they are balanced in $z_2$ and ${\rm dim}(G) \geq 3$. \\

\noindent Furthermore, the third component of $g$ is of the form $z_3 + i \gamma$ for some $\gamma\in \mathbb{R}$ with $\gamma=0$ in case(ii). 
\item[(b)] Suppose ${\rm dim} (G)=1$. Then $G$ must be abelian, only case (i) in (a) can occur and a dichotomy holds: either both $\alpha_1,\alpha_2$ are roots of unity or both of them are not. In the latter case, $P$ is of the form
\[
P_1(\vert z_1 \vert^2) + M(z_1,z_2) + P_2(\vert z_2 \vert^2)
\]
with at least one monomial in $M$ which is not balanced in either variable and with every other mixed monomial being either balanced both in $z_1$ and $z_2$ or neither.
\end{itemize}
\end{thm}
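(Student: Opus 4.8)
\emph{Proof strategy for Theorem~\ref{dim1}(b).}

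The plan is to push the hypothesis $\dim G=1$ through the normal–form analysis set up in the introduction, where it turns into a rigidity statement for the symmetry group of $P$. Since $\{T_t\}\subseteq G^{c}$ and $\{T_t\}$ is a noncompact one–parameter group, $\dim G=1$ forces $G^{c}=\{T_t\}$, and, $G^{c}$ being the identity component, it is normal in $G$, so the relations (\ref{1.1}) hold for \emph{every} $g\in G$. After the polynomial change of coordinates clearing $P$ of pluriharmonic terms and sending a fixed point of $'g$ to the origin, each $g\in G$ has the shape $g=('g,\,z_3+i\gamma_g)$ with $'g\in GA_{2}(\mathbb{C})$, $'g(0)=0$. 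Because $g_3=az_3+\phi$ with $a$ real (forced by $f_g(t)=at\in\mathbb{R}$) and $\partial\Omega=\{\rho=0\}$ is everywhere smooth, comparing $\partial/\partial x_3$ of $\rho\circ g$ and of $\rho$ along $\partial\Omega$ shows $\rho\circ g$ and $\rho$ are proportional with a positive constant $a_g$, i.e.\ $P\circ{}'g=a_gP$; moreover $g\mapsto{}'g$ is a homomorphism $G\to GA_{2}(\mathbb{C})$ with kernel $G^{c}$, so $G/G^{c}\hookrightarrow GA_{2}(\mathbb{C})$.

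The crucial step is to exploit that $\dim G=1$ means $\mathrm{Aut}(\Omega)$ admits no one–parameter subgroup transverse to $\{T_t\}$. For any $A$ in the linear symmetry group $K=\{A\in GL_2(\mathbb{C}):P\circ A=P\}$ the map $(A,\mathrm{id})$ lies in $\mathrm{Aut}(\Omega)$ and meets $\{T_t\}$ only in the identity; the same holds for any one–parameter polynomial symmetry of $P$ (a shear, a partial rotation, a weighted dilation). Hyperbolicity of $\Omega$ makes $K$ bounded (an unbounded sequence in $K$ would make $P$ constant in some direction, putting a complex line in $\partial\Omega$), hence compact, hence — having no one–parameter subgroup — finite; the same rules out a nonlinear one–parameter symmetry, and, combined with the Bass–Meisters classification of one–parameter subgroups of $GA_2(\mathbb{C})$ and the Friedlander–Milnor list of conjugacy classes, together with the fact that an element of type (ii) of part~(a) already forces $\dim G\ge2$, this shows that each $'g$ is conjugate to a finite–order diagonal unitary $\mathrm{diag}(\alpha_1,\alpha_2)$. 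Then $'g^{m}=\mathrm{id}$ gives $P=a_g^{m}P$, so $a_g=1$; thus $G^{c}$ is central, $'g\in K$, and $G/G^{c}\hookrightarrow K$ is finite; using the finite–type/non–degeneracy of $\Omega$ to preclude an irreducible non-abelian finite symmetry group one gets $G/G^{c}$ abelian, whence $G=G^{c}\times(G/G^{c})$ is abelian. In particular only case (i) of part~(a) occurs.

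Finally, the dichotomy and the form of $P$. Diagonalising, $'g=\mathrm{diag}(\alpha_1,\alpha_2)$, $\alpha_j=e^{i\theta_j}$, and $P(\alpha_1z_1,\alpha_2z_2)=P(z_1,z_2)$. If $\alpha_1$ were a root of unity but $\alpha_2$ not, a suitable power of $'g$ would produce the symmetry $(z_1,z_2)\mapsto(z_1,\alpha_2^{N}z_2)$ with $\alpha_2^{N}$ of infinite order, whose closure in $GL_2(\mathbb{C})$ is the full circle of $z_2$–rotations — a one–parameter symmetry of $P$, contradicting $\dim G=1$; symmetrically for the reverse, so either both $\alpha_j$ are roots of unity or both are not. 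In the latter case irrationality of $\theta_j/2\pi$ forces every pure monomial $z_j^{a}\bar z_j^{b}$ of $P$ to have $a=b$, giving $P=P_1(|z_1|^2)+M({}'z)+P_2(|z_2|^2)$, while each mixed monomial $z_1^{a_1}z_2^{a_2}\bar z_1^{b_1}\bar z_2^{b_2}$ of $M$ must satisfy $\theta_1(a_1-b_1)+\theta_2(a_2-b_2)\in2\pi\mathbb{Z}$; that relation makes $a_1=b_1$ equivalent to $a_2=b_2$, so every mixed monomial is balanced in both variables or in neither, and the absence of a pure $z_1$– or $z_2$–rotation symmetry (again from $\dim G=1$) guarantees that $M$ has at least one monomial balanced in neither, i.e.\ with $a_1\ne b_1$ and $a_2\ne b_2$.

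I expect the main obstacle to be the middle paragraph: converting "$\dim G=1$" into "$G/G^{c}$ is a finite abelian group with every $'g$ conjugate to a finite–order unitary". This requires the compactness of $K$ from hyperbolicity, the exclusion of hyperbolic, unipotent and dilation–type symmetries of $P$, the structure theory of $GA_2(\mathbb{C})$ (linearisation of finite subgroups, bounded–degree iterates), and a genuine use of finite type to kill irreducible non-abelian symmetry groups; the identity $a_g=1$ then drops out from finite order. Once this is in place, the dichotomy and the monomial bookkeeping describing $P$ are a routine unwinding of $P\circ{}'g=P$ against the non–existence of continuous symmetries.
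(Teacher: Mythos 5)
Your skeleton---normality of $G^c$ giving (\ref{1.1}) for every $g$, a functional equation tying $P\circ{}'g$ to $P$, and the principle that $\dim G=1$ forbids any one-parameter symmetry of $P$ transverse to $T_t$---matches the paper's. But the middle paragraph, which you yourself flag as the crux, is where the proof actually lives, and there it is asserted rather than carried out. Your normalization ``sending a fixed point of $'g$ to the origin'' is unavailable in exactly the hard cases: the Friedland--Milnor classes include the fixed-point-free affine map $(z_1,z_2)\mapsto(z_1+1,\alpha z_2)$, the elementary forms $(\beta^d(z_1+z_2^d),\beta z_2)$ and $(\beta^{\nu}(z_1+z_2^{\nu}q(z_2^r)),\beta z_2)$, and compositions of generalized H\'enon maps. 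The paper spends most of Section 3 disposing of these: H\'enon compositions die by comparing $\deg(H^m)=(\deg H)^m$ against the fixed degree of the right side of (\ref{2.4}); forms (b) and (c) die by a finite-type argument (the sequence $R(x_1^0-n)$ is unbounded yet must subconverge to a value of $P$, with Lemma \ref{useful} supplying the nondegeneracy); and the fixed-point-free affine case requires the analysis of the mixed second derivatives $D^2P$, which manufactures a translation symmetry in $\Re z_1$ and only thereby forces $\dim G\ge2$. Appealing to Bass--Meisters is misplaced here (that classifies one-parameter subgroups, and a single $g\in G\setminus G^c$ need not lie on one), and citing ``type (ii) of part (a)'' is circular until this case analysis has been done in its normalizer-of-$T_t$ form, since abelianness of $G$ is itself a conclusion of (b). Likewise $\mu=1$ (your $a_g=1$) is obtained in the paper not from finite order but by restricting to the slice domain $\Omega_0$ through a fixed point of $g_2$ and invoking Proposition 2.7 of \cite{V}; that is what yields normalizer $=$ centralizer and hence abelianness.

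Separately, your claim that $K$ is finite, so that every $'g$ is conjugate to a \emph{finite-order} unitary, is inconsistent with the statement you are proving: part (b) retains the branch where $\alpha_1,\alpha_2$ are not roots of unity, and your own final paragraph then analyzes that branch. You cannot have both. Your closure argument (an infinite subgroup of the torus closes up to a circle of linear symmetries of $P$, contradicting $\dim G=1$) is a legitimate device---the paper uses the same idea one variable at a time to get the dichotomy---but if you use it to make $K$ finite you have \emph{eliminated} the second branch rather than described it, and the proposal should say so instead of leaving the two paragraphs at odds. The paper never asserts finiteness of the linear symmetry group; it derives the dichotomy and the form of $P$ purely from the monomial relations $\alpha^{n(j_1-k_1)}\beta^{n(j_2-k_2)}=1$. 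Finally, ``finite type precludes an irreducible non-abelian finite symmetry group'' is neither justified nor needed: abelianness follows once every $g$ is shown to centralize $T_t$.
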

\begin{rem}
Contrast (b) with the case when ${\rm dim}(G)$ is maximal, in which case $\Omega = \mathbb{B}^3$ and $G=PSU(3,1)$ a simple Lie group, so in particular its commutator subgroup is the whole group $G$. By the last statement in (a), $'g$ cannot be the identity map if $g \in G \setminus \{ T_t \}$. In (b), we may also arrange $M$ to be devoid of pluriharmonic terms.\\
It will be seen during the course of the proof that the normalizer of the canonical subgroup $T_t$ coincides with its centralizer and (a) remains valid if we drop the abelianness assumption on $G$ and assume instead that $g$ lies in the normalizer $N$, of $T_t$ in $G$. So, for instance if the group $N/T_t$ contains a copy of $\mathbb{Z}$ then its dimension must be at least $1$ and as soon as $N/T_t$ is non-trivial, we gain information about $P$.
\end{rem}

\begin{thm}\label{dim2}
\begin{itemize}
\item[(a)] Suppose $S_s$ is a one parameter subgroup of $G$ whose infinitesimal generator lies in the normalizer of that of the canonical subgroup $T_t$ and is linearly independent from it. Then $'S_s$ is conjugate to exactly one of the following:
\begin{itemize}
\item[(i)] $(z_1,z_2) \to (z_1,z_2+s)$ in which case 
\[
\Omega \simeq \Big\{ z \in \mathbb{C}^3 \; : \; 2 \Re z_3 + P(z_1, \Im z_2)<0 \Big\}.
\]
\item[(ii)] $(z_1,z_2) \to (z_1, e^{  i \alpha s} z_2)$ where $\alpha\in \mathbb{R}^*$ and in which case 
\[ 
\Omega \simeq  \Big\{ z \in \mathbb{C}^3 \; : \; 2\Re z_3 + P_1(z_1) + M( z_1 , \vert z_2 \vert^2) + P_2(\vert z_2 \vert^2) < 0 \Big\}. 
\] 
\item[(iii)] $(z_1,z_2) \to (z_1+s, e^{ i\alpha s}z_2)$ with $\alpha \in \mathbb{R}^*$ in which case
\[
\Omega \simeq \Big\{ z \in \mathbb{C}^3 \; : \;  2 \Re z_3 + P(\Im z_1, \vert z_2 \vert^2) <0 \Big\}.
\]
\item[(iv)] $(z_1,z_2) \to (e^{ i \alpha s}z_1, e^{  i \beta s}z_2)$ where $ \alpha \beta \in \mathbb{R}^*$ and in this case
\[
\Omega \simeq \Big\{ z \in \mathbb{C}^3 \; : \; 2 \Re z_3 + P_1(\vert z_1 \vert^2) + M(z_1,z_2) + P_2(\vert z_2 \vert^2)<0 \Big\},
\]
where $M \equiv 0$ or $M \not \equiv 0$ and is balanced with respect $\big( (\alpha, \beta), \mathbb{S}^1 \big)$ with $\beta/\alpha \in \mathbb{Q}^*$ , i.e., every $m=cz_1^{j_1}\ov{z}_1^{k_1}z_2^{j_2}\ov{z}_2^{k_2}$ in $M$ satisfies 
\[
(j_1-k_1)\alpha + (j_2-k_2)\beta =0.
\]
So, in particular, every monomial is either balanced both in $z_1$ and $z_2$ or neither. $M$ also be taken to be devoid of pluriharmonic terms.
\end{itemize}
In all the cases the third component of $S_s$ is of the form $z_3+i\beta s$ for some $\beta \in \mathbb{R}$ with $\beta =0$ in cases (i) and (iii).
\item[(b)] Suppose ${\rm dim(G)}=2$ with $G^c$ abelian. Then, in case {\rm (a)(i)}, $G^c \simeq \mathbb{R} \times \mathbb{R}$ while in case {\rm (a)(ii)}, $G^c \simeq \mathbb{R} \times \mathbb{S}^1$ with $P$ not balanced in $z_1$. The case {\rm (a)(iii)} cannot occur and in case {\rm (a)(iv)}, $M \not \equiv 0$ and not extremely balanced and $G^c \simeq \mathbb{R} \times \mathbb{S}^1$. 
\end{itemize}
\end{thm}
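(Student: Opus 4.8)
\emph{Setup.} Write $X_0$ for the infinitesimal generator of $T_t$ and $Y$ for that of $S_s$. Since the normalizer and centralizer of $\langle T_t\rangle$ in $G$ coincide (a fact I would establish along the way, as flagged in the Remark after Theorem \ref{dim1}), the hypothesis on $Y$ gives $S_s\circ T_t=T_t\circ S_s$, i.e. the system (1.2) holds. As in the discussion of (1.1)--(1.2) this already yields: $S_s^1,S_s^2$ are entire in $z_1,z_2$ alone; the Jacobian argument together with the algebraicity theorem of \cite{CP} (non-degeneracy of $\Omega$) places $'S_s$ in $GA_2(\mathbb{C})$, so $s\mapsto{}'S_s$ is a one-parameter subgroup of $GA_2(\mathbb{C})$; and $S_s^3=z_3+\phi_s(z_1,z_2)$ with $\phi_s$ a holomorphic polynomial obeying $\phi_{s+s'}=\phi_{s'}+\phi_s\circ{}'S_{s'}$. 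From $S_s\in{\rm Aut}(\Omega)$ the defining functions $\rho\circ S_s$ and $\rho$ are proportional near $\partial\Omega$; substituting $S_s^3=z_3+\phi_s$ and matching the coefficients of $\Re z_3$ makes the constant $1$ and leaves
\begin{equation}
P\circ{}'S_s \;=\; P-2\Re\phi_s \qquad\text{for all } s\in\mathbb{R}, \label{feq}
\end{equation}
so $P\circ{}'S_s\equiv P$ modulo pluriharmonic terms. After the rigid-preserving change $z_3\mapsto z_3-h(z_1,z_2)$ removing the pluriharmonic part of $P$, I assume $P$ has none; independence of $Y$ from $X_0$ forces $'S_s\not\equiv{\rm id}$.

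\emph{Part (a).} The plan is to invoke the Bass--Meisters classification \cite{BM} of one-parameter subgroups of $GA_2(\mathbb{C})$ and conjugate $'S_s$ to triangular form $(\sigma_s(z_1),\tau_s(z_1,z_2))$ --- $\sigma_s$ a one-variable affine flow, $\tau_s$ affine in $z_2$ with $z_1$-polynomial coefficients --- lifting the $GA_2(\mathbb{C})$-conjugation to the rigid-preserving biholomorphism $(z_1,z_2,z_3)\mapsto(\psi(z_1,z_2),z_3)$ of $\mathbb{C}^3$ and re-normalizing $P$ by another $z_3\mapsto z_3-h$. Then one expands \eqref{feq} in powers of $s$; the coefficient of each $s^k$ $(k\ge 1)$ on the left is a differential polynomial in $P$ determined by $\sigma_s,\tau_s$ and must be pluriharmonic. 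The key claim to prove is that any genuine nonlinearity of $'S_s$ (a degree-$\ge 2$ term of $\sigma_s$, or a non-constant $z_1$-dependence in the coefficients of $\tau_s$), any eigenvalue of the linear part off the unit circle, or a nontrivial Jordan block either injects into $P\circ{}'S_s-P$ an uncancellable non-pluriharmonic mixed monomial or forces $P$ to lose all dependence on a variable --- both impossible, as $\Omega$ is hyperbolic (no complex lines in $\overline\Omega$, so $P$ genuinely involves both $z_1$ and $z_2$) and of finite type. That leaves $'S_s$ affine, linear part conjugate to ${\rm diag}(e^{i\alpha s},e^{i\beta s})$, any translational part confined to the kernel of the linear part; sorting the cases, and using the $z_1\leftrightarrow z_2$ symmetry, yields precisely (i)--(iv). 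The normal form of $P$ in each case is then read off \eqref{feq} monomial by monomial: a rotation in a variable kills each non-pluriharmonic monomial unbalanced in it; a real translation $z_j\mapsto z_j+s$ removes all $z_j$-dependence save through $\Im z_j$; in (iv) the relation $(j_1-k_1)\alpha+(j_2-k_2)\beta=0$ is exactly the invariance condition. Since in each of (i)--(iv) the normal-form $P$ is $'S_s$-invariant, \eqref{feq} gives $\Re\phi_s\equiv 0$, hence $\phi_s=i\beta s$ for some $\beta\in\mathbb{R}$ (additive by the cocycle identity, continuous in $s$). Finally, in (i) and (iii) the change $z_3\mapsto z_3-i\beta z_j$ ($z_j$ the translated variable) absorbs this constant and preserves the displayed $\Omega$, so $\beta$ may be taken $0$; in (ii) and (iv) a short cocycle computation shows no $z_3$-change achieves this.

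\emph{Part (b).} Assuming $\dim G=2$ with $G^c$ abelian, pick $Y$ in the Lie algebra of $G$ independent of $X_0$, set $S_s=\exp(sY)$; abelianness gives $[X_0,Y]=0$, so $S_s$ centralizes $T_t$, part (a) applies, and $G^c=\langle T_t\rangle\langle S_s\rangle$. If $'S_s$ carries a nontrivial translation (cases (i),(iii)) then $\langle S_s\rangle\cong\mathbb{R}$ with $\langle T_t\rangle\cap\langle S_s\rangle=\{e\}$, so $G^c\cong\mathbb{R}\times\mathbb{R}$; if $'S_s$ is a product of rotations (case (iv), where $M\not\equiv 0$ below forces $\beta/\alpha\in\mathbb{Q}^*$, hence commensurate; and the purely rotational case (ii)) then $\langle T_t\rangle$ is closed in $G^c$ with $G^c/\langle T_t\rangle\cong\mathbb{S}^1$, and a $2$-dimensional abelian Lie group with closed $\mathbb{R}$-subgroup and $\mathbb{S}^1$-quotient is $\cong\mathbb{R}\times\mathbb{S}^1$. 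The remaining assertions --- ruling out (iii), and the conclusions on $P$ and $M$ --- I would obtain by one device: whenever the normal form of $\Omega$ visibly admits a one-parameter automorphism group beyond $\langle T_t\rangle$ and $\langle S_s\rangle$ (and it lies in $G^c$, being connected to the identity), then $\dim G^c\ge 3$, a contradiction. In (iii), $\{2\Re z_3+P(\Im z_1,|z_2|^2)<0\}$ admits both $z_1\mapsto z_1+c$ $(c\in\mathbb{R})$ and $z_2\mapsto e^{i\theta}z_2$, giving two more independent directions --- so (iii) cannot occur. In (iv), if $M\equiv 0$ or $M$ is extremely balanced (a polynomial in $|z_1|^2,|z_2|^2$), then $P$ is separately invariant under $z_1\mapsto e^{i\theta_1}z_1$ and $z_2\mapsto e^{i\theta_2}z_2$, two more directions; hence $M\not\equiv 0$ and is not extremely balanced, whence a monomial of $M$ with $(j_1-k_1,j_2-k_2)\ne(0,0)$ makes $\beta/\alpha$ rational. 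In (ii), if $P$ were balanced in $z_1$, then $z_1\mapsto e^{i\theta}z_1$ would be an extra automorphism; so $P$ is not balanced in $z_1$.

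\emph{Main obstacle.} The hard part will be the elimination step inside (a): deducing from the Bass--Meisters list, hyperbolicity and finite type that $'S_s$ must be affine with a unitary-type linear part. This requires a case-by-case tracking, order by order in $s$, of how each admissible nonlinearity or non-unitary eigenvalue propagates through \eqref{feq} into an uncancellable non-pluriharmonic term, compounded by having to carry the $GA_2(\mathbb{C})$-conjugation up to a rigid-preserving biholomorphism of $\mathbb{C}^3$ and re-derive the normal form of $\Omega$ at each step. Once (a) is in hand, the statements in (b) are immediate dimension counts.
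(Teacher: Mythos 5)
Your overall route coincides with the paper's: commutation with $T_t$ pins down $S_s^3=z_3+h_s(z_1,z_2)$ and places $'S_s$ in $GA_2(\mathbb{C})$ as a one-parameter subgroup; the defining-function comparison yields $P-P\circ{}'S_s=2\Re h_s$; the Bass--Meisters list is invoked; the forms $(z_1,z_2)\mapsto(z_1,z_2+p(z_1)s)$ and $(z_1,z_2)\mapsto(e^{as}z_1,e^{ads}(z_2+sz_1^d))$ are to be eliminated via finite type; unitarity of the linear part is forced by coefficient comparison; and part (b) is the same ``an extra visible one-parameter subgroup raises $\dim G^c$ to $3$'' count. Part (b) as you wrote it is essentially complete and matches the paper.

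The genuine gap is in the elimination step of part (a), which you correctly flag as the hard part but whose sketch would not close as stated. For the subgroup $(z_1,z_2)\mapsto(z_1,z_2+p(z_1)s)$ with $\deg p\ge 1$, the invariance $Q(z_1,z_2+p(z_1)s)=Q(z_1,z_2)$ neither forces $P$ to lose all dependence on a variable nor directly injects an uncancellable non-pluriharmonic monomial: at a root $z_1^0$ of $p$ the invariance is vacuous, and away from the roots it only gives $Q(z_1,p(z_1)z_2)=\sum_j a_j(z_1,\ov z_1)(\Im z_2)^j$. To reach a contradiction one must upgrade this to $Q(z_1,w)=\sum_j b_j(z_1,\ov z_1)\bigl(\Im (w\,\ov{p(z_1)})\bigr)^j$ with \emph{polynomial} coefficients $b_j$, i.e.\ prove that $\vert p(z_1)\vert^{2j}$ divides $a_j$ in $\mathbb{C}[z_1,\ov z_1]$; this rests on the fact that $\vert p\vert^2$ shares no factor with $\Re p$ or $\Im p$ (the paper's Lemma \ref{useful}), and only then does evaluation at $z_1=z_1^0$ produce a complex line inside $\partial\Omega$, contradicting finite type. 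That divisibility lemma is the missing idea; without it your dichotomy (``uncancellable mixed monomial or loss of a variable'') does not cover this case. A secondary, fixable issue: for the translation cases (i) and (iii) your order of operations is circular --- you read the normal form of $P$ off the functional equation $P\circ{}'S_s=P-2\Re\phi_s$ and only afterwards conclude $\Re\phi_s\equiv0$, but when $'S_s(0)\neq 0$ the right-hand side is not yet known to be trivial. One must first integrate the cocycle identity to get $h_s=q\circ{}'S_s-q$ for a single polynomial $q$, conjugate $q$ away by $z_3\mapsto z_3-q(z_1,z_2)$, and only then extract the invariance of the (new) defining polynomial, which may now carry pluriharmonic terms that have to be tracked rather than assumed absent.
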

Thus when ${\rm dim}(G)=2$, $\Omega$ is equivalent to a model whose $P$ is balanced -- strictly or completely diversely -- in exactly one of the variables or to a model that is strictly balanced with respect to some weight but not extremely balanced in both variables jointly. We also note that when $N/T_t$ contains a copy of $\mathbb{R}$, with its corresponding subgroup $'S_s$ acting non-trivially on $\mathbb{C}^2$, i.e., both components of the vector field 
$F^{-1} \circ{}' S_s \circ F$ is non-zero for all $F \in {\rm Aut}(\mathbb{C}^2)$, then ${\rm dim}(N/T_t)$ is at least $2$. The action is trivial in this sense in (i) and (ii). While, (iv) when considered as an action of $\mathbb{R}$ is non-trivial but not faithful as it reduces to an action of $\mathbb{R}/\mathbb{Z}$ when $\beta/ \alpha \in \mathbb{Q}^*$. In the case $\beta/ \alpha \in \mathbb{R} \setminus \mathbb{Q}$, $M \equiv 0$, the action is non-trivial, faithful and ${\rm dim}(N/T_t)$ is at least $2$ as in case (iii).

\begin{thm} \label{dim3}
Suppose $G^c$ is abelian and three dimensional. Then we have precisely three possibilities:
\begin{itemize}
\item[(i)] $G^c \simeq \mathbb{R} \times \mathbb{R} \times \mathbb{R}$ and 
$\Omega \simeq \Big \{ z \in \mathbb{C}^3 \; : \; 2 \Re z_3 + P(\Im z_1, \Im z_2)<0 \Big \}$
\item[(ii)] $G^c \simeq \mathbb{R} \times \mathbb{R} \times \mathbb{S}^1$ and 
$\Omega \simeq \Big\{ z \in \mathbb{C}^3 \; : \; 2 \Re z_3 + P(\Im z_1, \vert z_2 \vert^2) <0 \Big\}$
\item[(iii)] $G^c \simeq \mathbb{R} \times \mathbb{S}^1 \times \mathbb{S}^1$ and
$\Omega \simeq \Big\{ z \in \mathbb{C}^3 \; : \; 2 \Re z_3 + P(\vert z_1 \vert^2, \vert z_2 \vert^2)<0 \Big \}$.
\end{itemize} 
\end{thm}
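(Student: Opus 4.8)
The plan is to reduce the statement to the classification of a pair of commuting one‑parameter subgroups supplied by Theorem~\ref{dim2}, together with a dimension count. Since $G^c$ is connected, abelian and three--dimensional it is isomorphic to $\mathbb{R}^{3-k}\times(\mathbb{S}^1)^{k}$ for some $k$; as it contains the closed, non-compact subgroup $\{T_t\}\cong\mathbb{R}$, the toral case $k=3$ is impossible, so $k\in\{0,1,2\}$, and these three values will turn out to correspond bijectively to (i), (ii), (iii). The map $g\mapsto{}'g$ sends $G^c$ into $GA_2(\mathbb{C})$ with kernel exactly $\{T_t\}$ (an automorphism of $\Omega$ with ${}'g=\mathrm{id}$ is forced to be a $z_3$--translation, exactly as recalled in the excerpt), so its image $A:=\{{}'g:g\in G^c\}$ is a two--dimensional connected abelian subgroup of $GA_2(\mathbb{C})$; moreover $A=\langle\,{}'S^1,{}'S^2\,\rangle$ for any one--parameter subgroups $S^1_s,S^2_u$ of $G^c$ whose infinitesimal generators, together with that of $\{T_t\}$, span $\mathrm{Lie}(G^c)$.

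First I would fix such $S^1,S^2$. As $G^c$ is abelian their generators centralize---hence normalize---that of $\{T_t\}$ and are independent of it, so Theorem~\ref{dim2}(a) applies to $S^1$: after a change of variables (of the type preserving the canonical subgroup, so that the argument may be iterated) ${}'S^1_s$ is put into one of the four normal forms, $\Omega$ takes the corresponding model shape, and the third component of $S^1$ becomes $z_3$ or $z_3+i\beta s$. With ${}'S^1$ now frozen, I would compute the centralizer of ${}'S^1_s$ among admissible one--parameter subgroups directly: expanding the commutation relation coefficientwise shows that ${}'S^2_u$ is triangular in a pattern dictated by the shape of ${}'S^1$, and the requirement that each ${}'S^2_u$ be a polynomial automorphism of $\mathbb{C}^2$ collapses it to an affine map, which---after a harmless conjugation by a translation or rotation leaving ${}'S^1$ in place---is a diagonal linear map, a translation, or a screw. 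Here hyperbolicity of $\Omega$ enters decisively: any dilation component in ${}'S^2$, or a genuine $z_2$--shear by $z_1$, would force $P$ to lose all dependence on one variable and so make $\Omega$ contain a complex line, which is impossible. The constraint $\dim A=2$ then discards the remaining degenerate configurations, namely those in which $\langle\,{}'S^1,{}'S^2\,\rangle$ is only one--dimensional; these are precisely the cases in which ${}'S^2$ coincides with ${}'S^1$ up to reparametrization, i.e. $S^2\in\{T_t\}\cdot S^1$, contradicting the three--dimensionality of $G^c$.

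Carrying this out for each of the four possibilities for ${}'S^1$, one finds that, up to conjugacy, the commuting pair $({}'S^1,{}'S^2)$ generates one of $\{(z_1+s,\,z_2+u)\}$, $\{(z_1+s,\,e^{i\theta}z_2)\}$, or the full two--torus $\{(e^{i\phi}z_1,\,e^{i\psi}z_2)\}$ acting on $\mathbb{C}^2$; in particular the screw and irrational--rotation normal forms of ${}'S^1$ get absorbed into the second and third of these once the commuting partner is adjoined. Imposing invariance of $P$ under $A$ then yields exactly $P=P(\Im z_1,\Im z_2)$, $P=P(\Im z_1,|z_2|^2)$, or $P=P(|z_1|^2,|z_2|^2)$, which are the models in (i), (ii), (iii). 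In each case $\Omega$ is visibly invariant under the corresponding copy of $\mathbb{R}^3$, $\mathbb{R}^2\times\mathbb{S}^1$ or $\mathbb{R}\times(\mathbb{S}^1)^2$, which, being three--dimensional and connected, must then equal $G^c$; and the three possibilities are mutually exclusive since biholomorphic domains have isomorphic automorphism groups while these three Lie groups are pairwise non-isomorphic.

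I expect the main obstacle to lie in the second step, the explicit determination of the commuting partner ${}'S^2$ and the elimination of the spurious cases. The awkward points are the resonant subcases---$\alpha=\pm\beta$ in normal form (iv), or $\beta\mid\alpha$---where a naive reading of the commutation relation a priori permits higher--degree terms in ${}'S^2$ and one must invoke the polynomial--automorphism and hyperbolicity constraints to restore diagonality, together with the bookkeeping of checking that every conjugation used is a legitimate change of variables for the model $\Omega$ that leaves $\{T_t\}$ and the already--normalized ${}'S^1$ undisturbed.
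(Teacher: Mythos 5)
Your overall route is the same as the paper's: pick two one-parameter subgroups $S^1,S^2$ of $G^c$ whose infinitesimal generators complete that of $T_t$ to a basis of ${\rm Lie}(G^c)$, normalize $'S^1$ via Theorem \ref{dim2}, determine the possible commuting partners $'S^2$, and read off the invariance of $P$ under the resulting two-parameter group (the commuting-partner analysis is precisely the paper's Section 5, culminating in Theorem \ref{nrmlfrm2} and Corollary \ref{nrmlfrm3}). However, there is a genuine gap in your second step. The assertion that the commutation relation together with the requirement that each $'S^2_u$ be a polynomial automorphism ``collapses it to an affine map'' is false exactly in the resonant situations you flag, and the fixes you propose do not address them. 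Concretely, if $'S^1_s=(e^{as}z_1,e^{bs}z_2)$ with $b=ja$ for some integer $j\geq 2$, then
\[
R_t(z_1,z_2)=\big(e^{ct}z_1,\;e^{dt}z_2+(e^{jct}-e^{dt})z_1^j\big)
\]
is a genuine non-affine one-parameter group of polynomial automorphisms commuting with $'S^1$; it is conjugate to a diagonal flow, but only by the non-affine shear $H(z_1,z_2)=(z_1,z_2+z_1^j)$. Likewise, when $'S^1_s=(z_1+s,z_2)$ the centralizer contains the non-affine groups $\big(z_1+q(e^{\lambda t}z_2)-q(z_2)+c't,\,e^{\lambda t}z_2\big)$, normalized only by an elementary map $E_q$. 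Neither the polynomial-automorphism property nor hyperbolicity/finite type rules these partners out: finite type only eliminates the forms (3) and (5) of Theorem \ref{nrmlfrm}, while these examples are conjugates of forms (4) and (2). Nor are the resonances irrelevant to model domains, since the weights $(a,b)=(i\alpha,i\beta)$ of a rotation subgroup of a non-extremely-balanced model satisfy $\beta/\alpha\in\mathbb{Q}^*$, which is exactly when resonant indices $(j-1)\alpha+k\beta=0$ can occur.

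What your sketch is missing, and what carries the real weight in the paper, is a proof that a commuting pair can be \emph{simultaneously} conjugated to normal form by a single change of variables preserving $T_t$ and the already-normalized $'S^1$. In the diagonal--diagonal case the paper shows that the conjugating map $H$ with $R_t=H\circ Q_t\circ H^{-1}$ fixes the origin, that its lowest-weight homogeneous component $H^{(L)}$ (weights taken from $'S^1$) already intertwines $Q_t$ and $R_t$, and that $H^{(L)}$ commutes with $'S^1$, so that conjugating by $H^{(L)}$ diagonalizes $'S^2$ without disturbing $'S^1$; the other cases are handled by exhibiting explicit shears or translations that likewise leave $'S^1$ fixed. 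Once this simultaneous normalization (Corollary \ref{nrmlfrm3}) is available, the rest of your argument --- invariance of $P$ under the two-parameter group of translations and rotations, and, in case (iii), the observation that a nonzero non-extremely-balanced $M$ would force the two rotation weights to be proportional and hence the generators to be dependent --- is exactly the paper's conclusion and goes through.
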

Thus for instance in case (i), when $G^c$ is a abelian but torsion free $\Omega$ is extremely balanced but completely diverse. Other cases also illustrate no less, this reflection of the properties of the automorphism group $G$ on the algebraically reduced form of $\Omega$. When the algebraic constraint (abelianness) on $G$ is dropped, the mutual exclusiveness of the above reduced forms, disappears: the ball $\mathbb{B}^n$ can incarnate itself in each of the above three forms and further in the non-extremely balanced form (iv) of theorem \ref{dim2} as well.
\begin{rem} 
The theorem will be seen to be valid when $G^c$ is replaced by the normalizer of $T_t$ as well. Case (i) of this theorem is indeed a simple corollary to the more general characterization of tube domains in theorem 1 of \cite{Y}. We have restricted our attention to model domains in $\mathbb{C}^3$, as the classifications of \cite{FrMi} and \cite{BM} have been used. It would be intersecting to know if analogues of these results hold in higher dimensions as well.
\end{rem}

{\thanks Acknowledgements: I would like to thank my advisor, Kaushal Verma for many helpful discussions. I also acknowledge the support of the Shyama Prasad Mukherjee Fellowship provided by the Council of Scientific and Industrial Research, India.}

\section{Preliminaries}
\noindent We first recall and extend some terminology considered earlier in such a context in \cite{BP4} and \cite{CP}, among others. Let $l=1$ or $2$. We shall say that a monomial $m=cz_1^{j_1}\ov{z}_1^{k_1}z_2^{j_2}\ov{z}_2^{k_2}$ is a mixed monomial if $j_l+k_l>0$ for both values of $l$. Call $m$, {\it pure} if $k_l=0$ for both values of $l$ or $j_l=0$ for both values of $l$; so such a monomial is either holomorphic or anti-holomorphic. We at times assign weights $\theta_l \in \mathbb{R}$ to the variables $z_l$; we will even consider $\theta_l \in \mathbb{C}$ in section $5$ and weights with $\theta_2/\theta_1<0$ will be of importance -- for us these parameters will be provided by the group $G$. Given this assignment, the weight of $m$ is 
\[
{\rm wt}(m)=(j_1 + k_1) \theta_1 + (j_2 + k_2) \theta_2
\]
while its signature is 
\[
{\rm sgn}(m)=(j_1 -k_1) \theta_1 + (j_2 - k_2)\theta_2,
\]
the difference of the weight of the anti-holomorphic component of $m$ from that of its holomorphic component. A real analytic polynomial $p(z_1,z_2)$ is {\it weighted (resp. signature) homogeneous} if all its monomials have the same weight (resp. signature). So $p$ is weighted homogeneous of weight $\lambda$ if it satisfies 
\[
p(e^{\theta_1t}z_1,e^{\theta_2 t}z_2)=e^{\lambda t}p(z_1,z_2)
\]
while it is signature homogeneous of signature $\lambda$ if 
\[
p(e^{i \theta_1 t}z_1,e^{i \theta_2 t}z_2)=e^{ i \lambda t}p(z_1,z_2)
\]
for all $t \in \mathbb{R}$. Now let $A$ stand for one of the groups $\mathbb{R}$ or $\mathbb{Z}$, $\mathbb{S}^1$ or (one of its discrete subgroups $\simeq$) $\mathbb{Z}_N$. Call $m$ {\it balanced} with respect to 
\[
\Big( \Theta = ( \theta_1, \theta_2 ), A \Big)
\]
if its signature is $0$ when $A=\mathbb{R}$ or $\mathbb{S}^1$, an integer when $A=\mathbb{Z}$. A monomial $m$ is said to be balanced with respect to $\mathbb{Z}_N$, more precisely $(\Theta, \mathbb{Z}_N)$, if it is balanced with respect to $(\Theta, \mathbb{Z})$ for some $\Theta=(\theta_1,\theta_2)$ with $e^{i \theta_1},e^{ i \theta_2}$ being a pair of $N$-th roots of unity. Further, $m$ is said to be balanced in $z_1$, if it is balanced with respect to $\big( (1,0),\mathbb{S}^1 \big)$; it is called balanced in $z_1$ with respect to $\mathbb{Z}_N$, if it is balanced with respect to $\big( (\alpha,0),\mathbb{Z}_N \big)$ where $\alpha$ is an $N$-th root of unity (with a similar understanding for being balanced in the variable $z_2$). We shall mention the group $A$ only when it is $\mathbb{Z}$ or $\mathbb{Z}_N$. Note that if $m$ is balanced with respect to both the extremal weights $(1,0)$ and $(0,1)$, i.e., if it is balanced in each of the variables separately, then it is balanced with respect to every weight $(\alpha, \beta)$ and in this case we say that $m$ is extremely balanced. $p$ is said to be {\it strictly balanced} (with respect to a pair $(\Theta, A)$) if each of its constituent monomials is balanced (with respect to that pair), {\it extremely balanced} if all its monomials are so while the notion of a polynomial of {\it balanced diversity} may be introduced as follows. First define the holomorphic quotient $hq(m)$, as the logarithm of the ratio of the weight of the holomorphic component of $m$ to that of its anti-holomorphic component, i.e.,
\[
hq(m)=\log \Big( (j_1\theta_1 + j_2 \theta_2)/(k_1 \theta_1+k_2\theta_2) \Big)
\]
so that $hq(m)=\infty$ (resp. $-\infty$) precisely when $m$ is holomorphic (resp. anti-holomorphic); we do not define $hq(m)$ when $m$ is just a constant and we shall always assume $m$ nonconstant. Note also that $m$ is balanced precisely when $hq(m)=0$. Call $m$ {\it extremely imbalanced} if $hq(m)=\pm \infty$ which happens precisely when $m$ is pure. Call a polynomial $p$ extremely imbalanced if all its monomials are so; example: $p(z_1,z_2) = \Re q(z_1,z_2)$ where $q$ is any holomorphic polynomial. Next, suppose $p$ is a weighted homogeneous polynomial of weight $W$ with respect to some weight $W$; we say that $p$ is {\it completely diversely balanced} if it contains at least one monomial of every possible value of the holomorphic quotient for a monomial of weight $W$, i.e., the set of holomorphic quotients, of all the monomials in it is equal to 
\[
S_W=\{ hq(m) \; : \; m \text{ is a monomial of weight } W \}
\]
which is a symmetric set: $S_W=-S_W$. For example, consider $p(z_1,z_2) = p(\Re z_1, \Re z_2)$. Any of its monomials is of the form $4a_{lm}(\Re z_1)^l(\Re z_2)^m$ which expands to a weighted homogeneous (with respect to any given weight $\Theta$) polynomial $A_{lm}$ in $'z, '\ov{z}$, any of its monomials being upto a constant of the form $z_1^{j_1}z_2^{j_2}\ov{z}_1^{l-j_1}\ov{z}_2^{m-j_2}$ with constant weight $l \theta_1 + m\theta_2=W$, say. Now, if $m=cz_1^{p_1}z_2^{p_2}\ov{z}_1^{q_1}z_2^{q_2}$ is a impure monomial of the same weight $W$, then it is clearly not necessary that $m$ matches (even modulo its coefficient) with one in $A_{lm}$. However, $hq(m)= \log \big( p_1 \theta_1 + p_2 \theta_2/ (l-p_1)\theta_1 + (m-p_2)\theta_2 \big)$ which is also the holomorphic quotient of the monomial $m'=z_1^{p_1}z_2^{p_2}\ov{z}^{l-p_1}\ov{z}_2^{m-p_2}$ which occurs modulo coefficient in $A_{lm}$. Moreover, $A_{lm}$ contains one monomial with $hq=+ \infty$ and one for $hq =-\infty$, so that in all, every $A_{lm}$ and subsequently the $p$ of this example is completely diversely balanced, indeed with respect to any weight and we shall also call such a polynomial {\it extremely diversely balanced}. Finally, call $p$ {\it diversely balanced} with respect to some given weight, if it has no extremely imbalanced monomials (including constants) and the average of the holomorphic quotients of all the monomials in it is zero. With this, every real valued $p$ without pure terms (and constants) will be diversely balanced; however, this can be reconciled with the fact that every real valued real analytic polynomial admits a holomorphic decomposition (introduced by D'Angelo in \cite{DA1} and also discussed in \cite{DA2})
\[
p(z,\ov{z}) = 2 \Re q(z) + \vert p_1(z) \vert^2 - \vert p_2(z) \vert^2 
\]
for some uniquely determined holomorphic polynomial $q$ and some holomorphic maps $p_1,p_2$ with $p_1(0)=0=p_2(0)$ (this ensures that $2 \Re q(z)$ does not decompose as $\vert f \vert^2 - \vert g \vert^2$ for some holomorphic maps, $f$ and $g$), thus rendering a decomposition of $p$ into (diversely) balanced and (extremely) imbalanced parts.\\

We call a model domain $\Omega$ as above, completely diversely/strictly/extremely balanced model if the corresponding $P$ is so; the non-degeneracy assumption of $\Omega$ rules out the possibility of $P$ being totally imbalanced. Non-degeneracy of $\Omega$ forces all the level sets $L$ of $P$ in $\mathbb{C}^2$ to be free from any germ of a non-trivial complex analytic variety. We shall drop without mention, the emphasis that $P$ is real analytic and write $P('z)$ for $P('z, '\ov{z})$ and often split $P$ as a sum of three parts
\[
P(z_1,z_2) = P_1(z_1) + M(z_1,z_2) + P_2(z_2)
\]
where $P_l$ is the sum of all those monomials involving $z_l, \ov{z}_l$ alone  and $M$ consists of all the (remaining) mixed monomials in $P$. Another simple consequence of the finite type assumption that will be used often is that $P_l(z_l) \not\equiv 0$ for $l=1,2$; this means that terms involving $z_l,\bar{z}_l$ alone occur in $P$. Also, we assume $P(0)=0$ so that the origin lies in $\partial \Omega$. Indeed, our change of variables $C$ in our reductions of the form of a member or a one-parameter subgroup of $G$ may well reintroduce a constant term in $P$ but their $'C$ will be independent of $z_3$ and $C_3 =z_3$, so for every such change of variables, we may by the change of the $z_3$-variable given by the translation
\[
(z_1,z_2,z_3) \to (z_1,z_2,z_3 + P(0))
\]
ensure that $P$ has no constant terms. The first two components of the automorphism will at all stages of the reduction procedure depend only on $z_1,z_2$, so $C$, clearly does not disturb the reduced form of the first two components of the automorphism; in fact even the possibility of a real constant now getting added to the third component of the reduced form will be seen to be ruled out owing to it being decoupled from the $z_3$ variable in all cases, so that conjugation by the above translation leaves it intact. Similar considerations apply to ensure the passage of an initial assumption about $P$ being free from pluriharmonic terms through all stages of calculations and change of variables involved in reducing the form of the automorphisms so that finally we have both their reduced form and this assumption about $P$ holding. Indeed, the sum of such terms will be of the form $2 \Re \phi(z_1,z_2)$ and we keep making the change of variables $C_\phi$, obtained by replacing $P(0)$ in the aforementioned translation by $\phi(z_1,z_2)$, without disturbing the reduced form of the automorphisms (whose first two components will be independent of $z_3$ and the third component of the form $z_3 + \psi(z_1,z_2)$ at all stages of their reduction process, so that conjugation by $C_\phi$ does not affect the form of the automorphism). The removal of pluriharmonic terms in $P$ aids in a direct transfer of the symmetries of the domain to those of $P$ especially when they are rotational while it is desirable to retain them when dealing with translational symmetries.\\ 

\noindent Finally, let us note again here for clarity that the fact that $'g \in GA_2(\mathbb{C})$ for all $g \in {\rm Aut}(\Omega)$, does not require the pseudoconvexity of $\partial \Omega$. Indeed, recall theorem 1.2 of \cite{CP} that every proper holomorphic mapping between any two non-degenerate rigid polynomial domains is algebraic ($g$ extends across $\omega^c$, the pseudoconcave portion of $\partial \Omega$ which is an open subset thereof and maps $\omega^c$ into itself. Non-degeneracy of $\partial \Omega$ ensures the same for its Levi form on an open dense subset and puts us in the situation of Webster's theorem \cite{W}; it is argued in \cite{CP} that it is possible pass to the situation of Webster's theorem even when $\omega^c =\phi$). Now as noted earlier, $'g \in {\rm Aut}(\mathbb{C}^2)$ so that the components $g_j$ ($j=1,2$) of $'g$ are entire functions satisfying equations of the from
\[
a^j_0('z) \big( g_j('z) \big)^{k_j} + a^j_1('z) \big( g_j('z) \big)^{k_j-1} + \ldots + a^j_{k_j}('z) =0
\]
where the $a^j_l$'s are holomorphic polynomials. Recalling the elementary estimate on the location of the roots $\zeta$ of a holomorphic polynomial $z^k + a_1 z^{k-1} + \ldots + a_k$ that $ \vert \zeta \vert  \leq 2 \; {\rm max}_j \vert a_j \vert^{1/j}$ we have that $g_j('z)$ is an entire function of algebraic growth. More precisely,  
\[
\vert a^j_0('z)\vert \vert g_j('z) \vert \leq 2 \; {\underset {l} {\rm max }} \big\vert a^j_l('z) \big\vert
\]
for all $'z$ outside the zero variety of $a^j_0('z)$ which is a thin set. It follows that $a^j_0g_j$ must be a polynomial. Since $g_j$ is entire we now have that $g_j$ is itself a polynomial. \\  

The group of all polynomial automorphisms of $\mathbb{C}^n$ will be denoted by $GA_n(\mathbb{C})$. All change of variables will be through polynomial automorphisms. $Q,R,a_j,b_j$ etc. will stand for polynomials whose definitions will keep varying (but remain fixed between successive definitions). All sums occurring below are finite.

\section{Automorphisms Not Connected to the Identity -- Proof of theorem \ref{dim1}}
The first step in the proof of theorem \ref{dim1} will consist of translating the fact that $g$ preserves $\Omega$, to an simple as equation as possible. This is equation (\ref{2.4}) which is easily obtained when $G$ is abelian (in which case equation (\ref{2.3}) coincides with (\ref{2.4})). To obtain the same when ${\rm dim}(G)=1$ -- indeed, that $G$ must be abelian -- and also to unravel further, the constraints imposed on $P$ and $g$ by that equation to reach certain definite conclusions about their form, we break the proof into two cases depending on whether $'g$ is conjugate to an elementary polynomial automorphism of $\mathbb{C}^2$ or not. We conclude by ruling out the latter possibility.\\
We shall not assume $G$ abelian and only work with $g \in N$, the normalizer of $T_t$. The case when $G$ is abelian will be simple corollary to this discussion. We begin with a differentiated form of (\ref{1.1}) for $g$,  
\begin{equation}\label{2.1}
\partial g_3/ \partial z_j (z_1,z_2,z_3+it) = \partial g_3/ \partial z_j (z_1,z_2,z_3) 
\end{equation}
which shows that $\partial g_3/ \partial z_j$ is independent of $z_3$ for $j=1,2$ and so $g_3$ is of the form
\[
g_3(z_1,z_2,z_3) = \phi(z_1,z_2) + \psi(z_3)
\]
for some holomorphic maps $\phi$ and $\psi$. Using the same constraint on the $z_3$-derivative at (\ref{2.1}), we see that $\psi$ is linear in $z_3$ (absorbing if necessary, the constant in $\psi$ into $\phi$), i.e.,
\[
g_3( z_1,z_2,z_3 ) = \phi( z_1,z_2 ) + az_3
\]
Feeding this back into (\ref{1.1}) we get $f(t)=at$. In the case when $G$ is abelian, $f(t)=t$ and so $a=1$.
Next note that for all $t \in \mathbb{R}$,
\[
\big( {}'z, -P('z, '\ov{z})/2 +it \big) \in \partial \Omega .
\]
Therefore, since $g$ preserves $\partial \Omega$,
\begin{equation}\label{2.2}
2 \Re \big( \phi('z) + a(-P('z,'\ov{z})/2 +it) \big) + P \big( {}'g('z), {} '\ov{g('z)} \big) =0.
\end{equation}
Let $a=\mu +i \nu$. Then,
\[
2 \Re \phi('z) + 2 \big( -\mu P('z,{}'\ov{z})/2 - \nu t \big) + P \big({}'g('z), {}'\ov{g('z)} \big) =0.
\]
Comparing coefficients of $t$ on both sides, we get $\nu=0$, so $a=\mu \in \mathbb{R}$ and (\ref{2.2}) becomes
\begin{equation} \label{2.3}
2 \Re \phi('z) = \mu P('z,'\ov{z}) - P({}'g('z), '\ov{g('z)} ). 
\end{equation}
Consider
\[
F('z, 'w) = \phi('z) + \ov{\phi('\ov{w})} - \mu P('z,'w) + P \big({}'g('z), {}' \ov{g(' \ov{w})} \big)
\]
which is holomorphic in $('z,'w) \in \mathbb{C}^2 \times \mathbb{C}^2$ and vanishes on $\{'\ov{w}= {}'z\}$ which is maximally totally real and so vanishes identically on $\mathbb{C}^2 \times \mathbb{C}^2$. Putting $'w=0$ and noting that $P('z,0) \equiv 0$ as $P$ has no pure terms, we have
\[
\phi('z) = -\ov{\phi(0)} - P \big( {}'g('z), \ov{\omega} \big)
\]
where $\omega = {}' g (0) $, so $\phi$ is a polynomial and hence $g \in GA_3(\mathbb{C})$.\\
Now, suppose one of the components of $'g$, say $g_2$, is a function of one of the variables alone, say $g_2(z_1,z_2)=g_2(z_2)$ and moreover has a fixed point $z_2^0$. Then consider the domain
\[
\Omega_0 = \big\{ (z_1,z_3) \in \mathbb{C}^2 : 2 \Re z_3 +P_0(z_1, \ov{z}_1) <0 \big\}
\]
where $P_0(z_1, \ov{z}_1)=P(z_1,z_2^0, \ov{z}_1,\ov{z}_2^0)$. It can be seen that this is also a finite type domain. Since $g \in G$ we have
\[
2 \Re \big(g_3(z_1,z_2,z_3) \big) + P \big( g_1(z_1,z_2), g_2(z_1,z_2) \big) <0
\]
for all $(z_1,z_2,z_3) \in \mathbb{C}^3$ with 
\[
2 \Re z_3 + P(z_1,z_2) <0.
\]
Since 
\[
P \big(g_1(z_1,z_2^0),g_2(z_1,z_2^0) \big) = P \big( g_1(z_1,z_2^0), z_2^0 \big) = P_0 \big( g_1(z_1,z_2^0), \overline{g_1(z_1,z_2^0)} \big)
\]
we have for points of the form $(z_1,z_2^0,z_3) \in \Omega$
\[
2 \Re(g_3(z_1,z_2^0,z_3)) + P_0 \big( g_1(z_1,z_2^0), \overline{g_1(z_1,z_2^0)} \big) <0,
\]
i.e., $g^0(z_1,z_3) = \big( g_1(z_1,z_2^0), g_3(z_1,z_2^0,z_3) \big)$ is an automorphism of $\Omega_0$. Keeping this observation aside, let us now consider two cases depending on the conjugacy class in $GA_2(\mathbb{C})$ to which $g$ belongs.

\noindent \textbf{Case (A):} First we deal with the case when $'g$ is conjugate to an elementary map, i.e., after a change of variables, $'g$ is given by 
\[
'g(z_1,z_2)=(\gamma z_1 + \delta, q(z_1) + \tau z_2)
\]
where $\gamma,\tau \in \mathbb{C}^*, \delta  \in \mathbb{C}$ and $q(z_1) \in \mathbb{C}[z_1]$. Before passing, we note that an affine map can be conjugated to an elementary-affine map by conjugating its linear component	to its Jordan normal form.
Now Friedlander and Milnor have shown (see \cite{FrMi}) that we can by a further change of variables if necessary, reduce $'g$ further to one of the following forms
\begin{itemize}
\item[(a)]
\begin{itemize}
\item[(i)] A diagonal linear map $(z_1,z_2) \to (\alpha z_1, \beta z_2)$ with $\alpha \beta \in \mathbb{C}^*$,
\item[(ii)] An aperiodic affine transform $(z_1,z_2) \to (z_1+1, \alpha z_2)$ with $\alpha \in \mathbb{C}^*$,
\end{itemize} 
\item[(b)] $(z_1,z_2) \to \big( \beta^d(z_1+z_2^d), \beta z_2 \big)$ with $d \in \mathbb{N}$, $\beta \in \mathbb{C}^*$
\item[(c)] $(z_1,z_2) \to \big( \beta^\nu (z_1 + z_2^\nu q(z_2^r)), \beta z_2 \big)$ for $\nu \geq 0$ and in this case $\beta$ is a primitive $r$-th root of unity and $q(z)$ is a non-constant polynomial of the form
\[
z^k+ q_{k-1}z^{k-1}+ \ldots + q_1z +1
\]
with $q_{k-1}=0$ when $\beta=r=1$.
\end{itemize}
Now, note that in each of these cases, at least one of the components (indeed, the second component) is a function of one of the variables only, so by the foregoing observation we have that the automorphism $g^0$ of $\Omega^0$ as above, is of the form
\[
g^0(z_1,z_3) = (\gamma z_1 +\delta, \phi(z_1,z_2^0) + \mu z_3 \big).
\]
For an automorphism $g^0$ of the polynomial domain $\Omega_0 \subset \mathbb{C}^2$ of this form, we have by the proof of proposition 2.7 of \cite{V} that $ \vert \gamma \vert =1 = \vert \mu \vert$. The fact that 
\[
g= ( {}'g('z), \phi('z) + \mu z_3)
\]
preserves $\Omega$ gives for all $z \in \Omega$ that 
\[
2 \Re(\mu z_3) + 2 \Re\phi('z) + P \big({}'g('z), ' \ov{g('z)} \big) <0
\]
which simplifies by (\ref{2.3}) to
\[
\mu \big( 2 \Re z_3 + P('z, ' \ov{z}) \big) <0
\]
for $z \in \Omega$. So $\mu$ must be positive and hence $\mu=1$. So $f(t)=t$ and subsequently $g$ commutes with $T_t$. Thus, with the hindsight that $'g$ must necessarily be conjugate to an elementary map, we see that the normalizer of the canonical subgroup coincides with its centralizer; as the normalizer of $T_t$ is $G^c$ when ${\rm dim}(G)=1$, $G$ must be abelian in this case.\\
Next, (\ref{2.3}) now reads
\begin{equation} \label{2.4}
P('z, '\ov{z}) - P({}'g('z),{}'\ov{g('z)}) = 2 \Re \phi('z).
\end{equation}
Note that a constant term in $P$ if any cancels out on the left. When $P$ has no pluriharmonic terms, the same is true of $P \circ {}'g $ as well provided $'g(0)=0$, in which case we have by the above equation that $2 \Re \phi('z)=0$, i.e., $\phi$ is an imaginary constant and subsequently,
\[
P \circ {}' g=P
\]
We shall presently work out the consequences of this or (\ref{2.4}) on the form of $P$. Before that let us record a simple fact that will used many times. 
\begin{lem}\label{useful}
Suppose $Q(z_1,z_2)$ is a real analytic polynomial such that for some $p(z_1) \in \mathbb{C}[z_1,\ov{z}_1]$ we have 
\begin{equation}\label{psym}
Q(z_1,p(z_1)z_2)= \sum\limits_j a_j(z_1, \ov{z}_1) (\Im z_2)^j
\end{equation}
Then each of the polynomials $a_j$ is divisible by $\vert p(z_1) \vert^{2j}$ i.e.,
\[
Q(z_1,z_2) = \sum\limits_j b_j(z_1, \ov{z}_1) \big( \Im z_2 \ov{p(z_1)} \big)^j
\]
for some real analytic polynomials $b_j$.
\end{lem}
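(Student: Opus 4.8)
The plan is to compare the two sides of (\ref{psym}) coefficient by coefficient in $z_2,\ov z_2$ and then undo the substitution $z_2\mapsto p(z_1)z_2$. Write $Q(z_1,z_2)=\sum_{k,l}c_{kl}(z_1,\ov z_1)\,z_2^k\ov z_2^l$ with $c_{kl}\in\mathbb{C}[z_1,\ov z_1]$, and abbreviate $p=p(z_1)$, $\bar p=\ov{p(z_1)}$. Substituting $z_2\mapsto pz_2$, hence $\ov z_2\mapsto\bar p\,\ov z_2$, turns the left side of (\ref{psym}) into $\sum_{k,l}c_{kl}\,p^k\,\bar p^{\,l}\,z_2^k\ov z_2^l$, while expanding $(\Im z_2)^j=(2i)^{-j}(z_2-\ov z_2)^j$ by the binomial theorem shows the coefficient of $z_2^k\ov z_2^l$ on the right side to be $\binom{k+l}{k}(-1)^l(2i)^{-(k+l)}a_{k+l}$. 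Equating gives, for all $k,l\ge 0$,
\begin{equation}\label{eq:coeffcompare}
c_{kl}(z_1,\ov z_1)\,p^k\,\bar p^{\,l}=\binom{k+l}{k}\frac{(-1)^l}{(2i)^{k+l}}\,a_{k+l}(z_1,\ov z_1).
\end{equation}

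From (\ref{eq:coeffcompare}) I would read off the divisibility. Taking $(k,l)=(j,0)$ gives $a_j=(2i)^j c_{j0}\,p^j$, so $p^j\mid a_j$; taking $(k,l)=(0,j)$ gives $a_j=(-2i)^j c_{0j}\,\bar p^{\,j}$, so $\bar p^{\,j}\mid a_j$. Now $p$ and $\bar p$ have no common nonconstant factor in the unique factorisation domain $\mathbb{C}[z_1,\ov z_1]$: since $p$ is a polynomial in $z_1$ and $\bar p$ one in $\ov z_1$, any common divisor lies in $\mathbb{C}[z_1]\cap\mathbb{C}[\ov z_1]=\mathbb{C}$. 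Hence $p^j$ and $\bar p^{\,j}$ are coprime and $|p(z_1)|^{2j}=p^j\bar p^{\,j}$ divides $a_j$. Put $b_j:=a_j/|p|^{2j}\in\mathbb{C}[z_1,\ov z_1]$; the sum in (\ref{psym}) is finite, and $b_j=0$ whenever $a_j=0$.

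To finish I would recover $Q$ itself by inverting the substitution. Feeding $a_j=|p|^{2j}b_j$ back into (\ref{psym}),
\[
Q\big(z_1,\ov z_1,\,p z_2,\,\bar p\,\ov z_2\big)=\sum_j b_j\,(p\bar p)^j(\Im z_2)^j=\sum_j b_j\Big(\frac{\bar p\,(p z_2)-p\,(\bar p\,\ov z_2)}{2i}\Big)^j,
\]
so the polynomials $Q(z_1,\ov z_1,w,\ov w)$ and $\sum_j b_j\big((\bar p\,w-p\,\ov w)/2i\big)^j$ agree after the substitution $w\mapsto p z_2$, $\ov w\mapsto\bar p\,\ov z_2$. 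Since $p\not\equiv 0$, this substitution is a linear isomorphism for generic $z_1$, hence injective on polynomials, so the two polynomials coincide; this yields $Q(z_1,z_2)=\sum_j b_j(z_1,\ov z_1)\big(\Im(\bar p\,z_2)\big)^j$, the asserted form (which is literally $\sum_j b_j\,(\Im z_2\cdot\ov{p(z_1)})^j$ when $p$ is real-valued).

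The computation is largely bookkeeping; the one point needing care is the passage from $p^j\mid a_j$ and $\bar p^{\,j}\mid a_j$ to $|p|^{2j}\mid a_j$, which uses the coprimality of $p$ and $\bar p$ — that is, that $p=p(z_1)$ is holomorphic. Equivalently one may argue invariantly: (\ref{psym}) says $Q(z_1,p z_2,\bar p\,\ov z_2)$ is annihilated by $\partial_{z_2}+\partial_{\ov z_2}$, whence $Q$ is annihilated by the vector field $p\,\partial_{z_2}+\bar p\,\partial_{\ov z_2}$, whose ring of polynomial invariants over $\mathbb{C}(z_1,\ov z_1)$ is generated by $\bar p\,z_2-p\,\ov z_2$; clearing denominators to get polynomial coefficients once more invokes the same coprimality.
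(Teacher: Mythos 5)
Your proof is correct, but it reaches the divisibility $\vert p\vert^{2j}\mid a_j$ by a genuinely different route than the paper. The paper first divides by $\vert p(z_1)\vert^{2j}$, expands $\big(\Im (z_2\ov{p(z_1)})\big)^j=\big(\Im z_2\,\Re p-\Re z_2\,\Im p\big)^j$ by the binomial theorem, reads off the coefficient of $(\Im z_2)^k(\Re z_2)^{j-k}$ in $Q$, and then must show that $\vert p\vert^2$ shares no factor with $\Re p$ or $\Im p$ -- which it does by expanding $p$ about a root $\alpha$ and checking that $z_1-\alpha$ cannot divide $\Re p$ in $\mathbb{C}[z_1,\ov{z}_1]$. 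You instead expand in the monomials $z_2^k\ov{z}_2^l$, extract the two extreme coefficients $(k,l)=(j,0)$ and $(0,j)$ to get $p^j\mid a_j$ and $\ov{p}^{\,j}\mid a_j$ separately, and then invoke the coprimality of $p(z_1)$ and $\ov{p(z_1)}$, which is immediate since they are polynomials in the disjoint variables $z_1$ and $\ov{z}_1$ of the UFD $\mathbb{C}[z_1,\ov{z}_1]$. Both arguments ultimately rest on unique factorization and on $p$ being holomorphic (as the notation $p(z_1)$ and the paper's applications intend, despite the statement's ``$p(z_1)\in\mathbb{C}[z_1,\ov{z}_1]$''), but your coprimality step is essentially free, whereas the paper's requires a small computation; your version also isolates cleanly where holomorphy of $p$ enters. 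Two small remarks: your closing parenthetical about the ``literal'' reading of the conclusion is unnecessary -- the paper's $\big(\Im z_2\ov{p(z_1)}\big)^j$ means $\big(\Im(z_2\ov{p(z_1)})\big)^j$, exactly the form $\big(\Im(\ov{p}\,z_2)\big)^j$ you derive, as its own proof's expansion makes clear; and your final ``invariant'' reformulation is only a sketch (the clearing of denominators there re-imports the same coprimality), so the coefficient comparison should be regarded as the actual proof.
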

\begin{proof}
For $p(z_1) \neq 0$, rewrite (\ref{psym}) as follows
\begin{align*}
Q(z_1,z_2) &= \sum\limits_j a_j(z_1,\ov{z}_1) \big( \Im(z_2/p(z_1) \big)^j \\
&= \sum\limits_j (a_j(z_1, \ov{z}_1)/\vert p(z_1) \vert^{2j}) \big( \Im z_2 \overline{p(z_1)} \big)^j \\
&= \sum\limits_j ( a_j(z_1, \ov{z}_1)/\vert p(z_1) \vert^{2j} ) \big( \Im z_2 \Re p(z_1) - \Re z_2 \Im p(z_1) \big)^j \\
&= \sum\limits_{j} \sum\limits_{k} \Big({}^j C_k (-1)^{j-k} a_j(z_1, \ov{z}_1)/ \vert p(z_1) \vert^{2j} (\Re p(z_1))^k (\Im p(z_1) )^{j-k} \Big) (\Im z_2)^k (\Re z_2)^{j-k}
\end{align*}
Now, the coefficient of $( \Im z_2)^k (\Re z_2)^{j-k}$ in $Q(z_1,z_2)$ is 
\[
{}^j C_k (-1)^{j-k} a_j(z_1, \ov{z}_1) / \vert p(z_1) \vert^{2j} (\Re p(z_1))^k (\Im p(z_1))^{j-k}.
\]
Next, notice that $\vert p(z_1) \vert^2$ has no common factors with $\Re p(z_1)$ or $\Im p(z_1)$ and hence $\vert p(z_1) \vert^{2j}$ must divide $a_j(z_1, \ov{z}_1)$ in $\mathbb{C} [z_1, \ov{z}_1]$. Indeed, pick any prime factor of $p$, which must be of the form $z_1 - \alpha$ where $\alpha$ is one of the zeros of $p$ -- nothing is lost by assuming $p$ to be monic. Expand $p$ about $\alpha$, i.e., $p(z_1)= \sum c_j(z_1 - \alpha)^j$. Now suppose $\Re p(z_1)$ factors as $(z_1 - \alpha)q(z_1, \ov{z}_1)$ with $q(z_1, \ov{z}_1) \in \mathbb{C}[z_1,\ov{z}_1]$. Make the linear change of variables
\[
w=z_1 -\alpha
\]
to obtain 
\[
\sum\limits_j c_j w^j + \sum\limits_j \ov{c}_j \ov{w}^j = wq(w+\alpha, \ov{w+\alpha})
\]
Now, every monomial on the right is divisible by $w$. Noting that this cannot be the case with the left hand side, finishes the verification that neither $p$ nor $\ov{p}$ can share a common factor with its real or its imaginary part and thereby the lemma follows. 
\end{proof}
\begin{rem}
Thus we note that the two basic examples of a real valued extremely imbalanced and a non-extremely diversely balanced polynomial of one variable namely $\Re p(z)$ and $\vert q(z) \vert^2$ for holomorphic polynomials $p,q$ are `independent' in the sense that their greatest common divisor is $1$ upto a unit in $\mathbb{C}[z]$.
\end{rem} 
\noindent Continuing with the proof of theorem (\ref{dim1}), we first argue that $'g$ cannot be conjugate to the maps in (b), (c) and (a)(ii) when ${\rm dim}(G)=1$.\\

{\it The case when $'g$ is conjugate to a map of the form (a)(ii)}: 
After a change of variables if necessary,  
\[
'g^n(z_1,z_2) = (z_1 + n, \alpha^n z_2)
\]
where $'g^n$ denotes the $n$-fold composition of $g$ with itself. Note that since the first component, being a translation has no fixed point, we cannot conclude via the aforementioned arguments that $\vert \alpha \vert=1$. However, (\ref{2.4}) applied to $'g^n ={}'(g^n)$ gives
\begin{equation}\label{Ptransym}
P(z_1,z_2) - P(z_1 + n, \alpha^n z_2) = 2 \Re \phi(z_1 + n-1, \alpha^{n-1}z_2) 
\end{equation}
for all $n \in \mathbb{Z}$. In the present case we will not assume that $P$ is devoid of pluriharmonic terms; in fact we want their presence to enable us to cast $P$ in a form that will make its symmetries apparent. Letting $D^2$ denote any one of the operators $\partial ^2/ \partial z_j \partial \ov{z}_k$ with $(j,k) \in \{1,2\} \times \{1,2\}$ we have
\begin{equation} \label{2.45}
c_\alpha (D^2P)(z_1+n, \alpha^{n}z_2) = (D^2P)(z_1,z_2)
\end{equation}
where $c_\alpha$ is a non-zero constant, the precise value of which is as specified in the table below: 
\begin{table}[ht] \label{table}
\caption{Values of $c_\alpha$}
\centering
\begin{tabular}{c c}
\hline \hline
$(j,k)$ & $c_\alpha$ \\
\hline \\
(1,1) & $1$\\
(2,2) & $\vert \alpha \vert^{2n}$ \\
(1,2) & $\ov{\alpha}^n$ \\
(2,1) & $\alpha^n$\\
\end{tabular}
\end{table}\\
Let
\[
Q(z_1,z_2)=D^2P(z_1,z_2) = \sum\limits_j a_j(z_2, \ov{z}_2, \Re z_1)(\Im z_1)^j.
\]
By (\ref{2.45}),  
\[
c_ \alpha \sum\limits_j a_j(\alpha^{n}z_2,\ov{\alpha}^{n}\ov{z}_2, \Re z_1+n)(\Im z_1)^j = \sum\limits_j a_j(z_2,\ov{z}_2, \Re z_1)(\Im z_1)^j
\]
Equating coefficients of $(\Im z_1)^j$ we have 
\[
c_\alpha a_j(\alpha^{n}z_2,\ov{\alpha}^{n} \ov{z}_2, \Re z_1 + n) = a_j(z_2,\ov{z}_2, \Re z_1)
\]
for all $n \in \mathbb{Z}$. Expand $a_j$ as $\sum b_{kl}^j(\Re z_1) z_2^k \ov{z}_2^l$. We have by comparing coefficients of $z_2^k \ov{z}_2^l$ that
\begin{equation} \label{2.5}
c_\alpha \alpha^{nk} \ov{\alpha}^{nl} b_{kl}^j(x_1 + n) = b_{kl}^j(x_1)
\end{equation}
where $x_1=\Re z_1$ and we know $\alpha \neq 0$. Therefore, $x_1^0 + n$ is a (complex) root of $b_{kl}^j$ whenever $x_1^0$ is a root of the polynomial $b_{kl}^j$ which implies that the $b_{kl}^j$'s are all constants and so the $a_j$'s are independent of $\Re z_1$. Therefore the polynomial $Q$ must be of the form
\begin{equation} \label{2ndderivP}
Q(z_1,z_2) = \sum\limits_j a_j(z_2, \ov{z}_2)(\Im z_1)^j.
\end{equation}
At least one of the constants $b_{kl}^j$'s has got to be non-zero for otherwise all the $a_j$'s and subsequently $\partial^2 P/\partial z_j \partial \ov{z}_k$ for all values of $(j,k)$, will have to be zero, implying that $P$ is pluriharmonic which contradicts the finite type assumption. Now, (\ref{2.5}) gives $\vert \alpha \vert=1$.\\
Let's see what this implies for $P$. Consider first the case that
\[
Q= \partial^2 P/ \partial z_1 \partial \ov{z}_1
\]
Note that the anti-derivative of $(\Im z_1)^j$ with respect to $z_1$ or $\ov{z}_1$ is again a monomial in $\Im z_1$. A term-by-term integration with respect to $z_1,\ov{z}_1$ of the above form of $Q$ therefore leads to the expression of $P$ as (recall that $P$ has no constant term)
\[
P(z_1,z_2)=\sum\limits_j a^{11}_j(z_2)(\Im z_1)^j + C_1(z_1,z_2)
\]
for some real analytic polynomials $a^{11}_j$ and $C_1$, with every monomial in $C_1$ being pure in $z_1$. Putting $z_2=0$, we get that $P(z_1,0)$, which is constituted by precisely all those monomials in $P$ that are independent of $z_2$, is of the form
\[
P_1(z_1)= \sum\limits_j a^{11}_j(0)(\Im z_1)^j + q_1(z_1) + \ov{q_2(z_1)}
\]
for some holomorphic polynomials $q_1,q_2$. Since $P_1(z_1)=P(z_1,0)$ is real valued so is 
\[
P_1(z_1 +t) - P_1(z_1)=q_1(z_1)-q_1(z_1+ t ) +\ov{q_2(z_1)} -\ov{q_2(z_1 + t)}
\] 
for all $t \in \mathbb{R}$ which gives 
\[
(q_1-q_2)(z_1)=(q_1-q_2)(z_1+t)
\]
for all $t \in \mathbb{R}$ showing that  
\[
P_1(z_1)=\sum\limits_j a^{11}_j(0)(\Im z_1)^j + 2 \Re q_1(z_1).
\]
After the change of variables 
\[
(z_1,z_2,z_3) \to \big( z_1,z_2,z_3 + q_1(z_1) \big),
\]
we will have $P_1(z_1) = Q_1(\Im z_1)$ for some real valued, real analytic polynomial $Q_1$. Such a change of variables does not affect $'g$ and therefore allows us another application of (\ref{2ndderivP}). 

Now consider $Q=\partial^2 P/\partial z_2 \partial \ov{z}_2$. The relevant $c_\alpha$ at (\ref{2.5}) is $\vert \alpha \vert^{2n}=1$, so that equation  reads $\alpha^{n(k-l)}=1$ which means that
\[
Q=\partial^2 P/\partial z_2 \partial \ov{z}_2 = \sum\limits_j a_j(z_2, \ov{z}_2) (\Im z_1)^j
\]
with the monomials $cz_2^k \ov{z}_2^l$ in $a_j$ satisfying the condition that $k-l$ is divisible by $m$ if $\alpha$ is an $m$-th root of unity, else $k=l$. In either case, $a_j(\alpha^n z_2, \ov{\alpha}^{n} \ov{z}_2)=a_j(z_2,\ov{z}_2)$. Integrating this as before leads to $P$ being expressed as 
\[
P(z_1,z_2) = \sum\limits_j a_j^{22}(z_2)(\Im z_1)^j + C_2(z_1,z_2)
\]
for some real analytic polynomials $a_j^{22}$ and $C_2$ with every monomial in $C_2$ being pure in $z_2$. Put $z_1=0$ and denote the sum of the pure $z_2$ monomials coming from the above equation namely, $C_2(0,z_2)$ by $q'_1(z_2) + \ov{q'_2(z_2)}$ for holomorphic polynomials $q'_1,q'_2$. We then have by the argument used earlier in this connection that 
\[
(q'_1-q'_2)(\alpha^n z_2)=(q'_1-q'_2)(z_2)
\]
for all $n \in \mathbb{Z}$ which says that $q'_1(z_2)=q'_2(z_2) + \tilde{q}(z_2)$ where $\tilde{q}(z_2)=\sum c_j z_2^j$ with the sum running over indices $j$ that are divisible by $m$ if $\alpha$ is a root of unity, else $\tilde{q} =0$. Thus 
\[
P_2(z_2)=P(0,z_2)=a^{22}_0(z_2, \ov{z}_2) + 2 \Re q'_2(z_2) + \tilde{q}(z_2).
\]
Removing the middle term by the change of variables 
\[
(z_1,z_2,z_3) \to \big( z_1,z_2,z_3 +q'_2(z_2) \big)
\]
and observing that the monomials in $a_j^{22}$ and $a_j$ differ just by a (balanced) factor of $c \vert z_2 \vert^2$ -- so that they share the same properties -- we get in particular that $P_2(\alpha^n z_2)=P_2(z_2)$. The standing state of the equation (\ref{Ptransym}) namely,
\[
M(z_1,z_2) + P_2(z_2) - M(z_1+n, \alpha^n z_2) - P_2(\alpha^n z_2) = 2 \Re \phi(z_1 + {n-1},\alpha^n z_2)
\]
now reduces to one that is free of $P_2$ paving the way to get a better hold on the mixed terms
\begin{equation} \label{Mtransym}
M(z_1,z_2) - M(z_1+n, \alpha^n z_2) = 2 \Re \phi(z_1+n-1, \alpha^{n-1} z_2).
\end{equation}
Now take $Q=\partial^2 P/ \partial z_1 \partial \ov{z}_2 = \partial^2 M/ \partial z_1 \partial \ov{z}_2$ and integrate (\ref{2ndderivP}) once with respect to $\ov{z}_2$ and then $z_1$, to get that $M$ must be of the form
\[
M(z_1,z_2) = \sum\limits_j a^{12}_j(z_2,\ov{z}_2)(\Im z_1)^j + Q_{21} 
\]
where $Q_{21}$ is constituted by those terms in $M$ which are annihilated by $\partial^2/ \partial z_1 \partial \ov{z}_2$. Now, by the version of (\ref{2.5}) for our present $Q$, in which $c_\alpha=\ov{\alpha}^n$ we get, remembering that $b_{kl}^j$'s were constants, that 
\[
\alpha^{nk} \ov{\alpha}^{n(l+1)}=1.
\]
Writing $\alpha=e^{ i \theta}$, we have for every $(k,l)$ for which $b_{kl}^j \neq 0$ that
\[
(e^{ i \theta})^{n(k-l-1)} =1
\]
which shows that $\alpha$ must be a root of unity -- say a primitive $m$-th root -- unless we always have $k=l+1$. So, $b_{kl}^j$ can be non-zero only if $(k-l-1)$ is a multiple of $m$. Since the monomials in $a_j$ (of our present $Q$) and $a_j^{12}$ differ by a factor of $c\ov{z}_2$, the monomials $cz_2^k\ov{z}_2^l$ have the property that $k-l$ is divisible by $m$ in case $\alpha$ is an $m$-th root of unity, else $k=l$; in either case $a_j^{12}(\alpha^n z_2)=a_j^{12}(z_2)$ yielding a further reduced version of (\ref{Mtransym}), namely
\begin{equation}\label{Q12transym}
Q_{21}(z_1,z_2) - Q_{21}(z_1+n, \alpha^n z_2) =2 \Re \phi(z_1+n-1,\alpha^{n-1}z_2)
\end{equation}
We boot-strap this a final time by applying its consequence (\ref{2ndderivP}) with $Q=\partial^2 Q_{21}/\partial \ov{z}_1 \partial z_2$ again. The corresponding polynomials $a_j$ now satisfy 
\[
a_j(z_2,\ov{z}_2) = \alpha^n a_j(\alpha^n z_2,\ov{\alpha}^n \ov{z}_2)
\]
Also observe that if we expand $Q_{21}$ in a similar manner (see \ref{Q12form} below) then the $a_j$'s differ from the $a_j^{21}$ occurring below at (\ref{Q12form}) by a factor of $c z_2$. Therefore, the monomials $cz_2^k\ov{z}_2^l$ in $a_j$ will share the same properties as those of $a_j^{12}$ and $a_j^{21}$. The form of $Q_{21}$ obtained by integration of (\ref{2ndderivP}) as
\begin{equation} \label{Q12form}
Q_{21}(z_1,z_2) = \sum\limits_j a_j^{21}(z_2,\ov{z}_2) (\Im z_1)^j + \text{ terms annhilated by } \partial^2/\partial \ov{z}_1 \partial z_2.
\end{equation}
can this time be quickly reduced to $Q_{12}(z_1,z_2)=\sum a_j^{21}(z_2,\ov{z}_2) (\Im z_1)^j$ by merely recalling that all the terms in $Q_{21}$ were divisible by $\ov{z}_1z_2$ to begin with. Feeding this back into (\ref{Q12transym}) then yields at once that $\phi \equiv 0$. Therefore 
\[
P(z_1,z_2)=P_1(\Im z_1) + \sum\limits_{j \geq 1} a_j(z_2)(\Im z_1)^j +  b(z_2) 
\]
for some real analytic polynomials $a_j$ and $b$ all of whose monomials $cz_2^k\ov{z}_2^l$ share the property that $k-l$ is divisible by $m$ in the case when $\alpha$ is an $m$-th root of unity, else $k=l$. Evidently, we have a one parameter group of translations in the $\Re z_1$-direction namely
\[
(z_1,z_2,z_3) \to (z_1+t,z_2,z_3),
\]
apart from the group $\{T_t\}$. So, if ${\rm dim}(G)=1$ then $g \in G \setminus G^c$ cannot be such that $'g$ is conjugate to a map of the form (a)(ii).\\

{\it $'g$ is not conjugate to a map of the form (b)}: Supposing the contrary, we write down its $n$-th iterate 
\[
'g^n(z_1,z_2) = \big( \beta^{nd}(z_1+nz_2^d), \beta^nz_2 \big)
\]
Note that in this case, $'g^n(0)=0$ so $\mu=1$ and the third component of $'g$ is $z_3 + \phi(z_1,z_2)$ and $P \circ {}'g^n =P$, i.e., 
\begin{equation}\label{2.7}
P \big( \beta^{nd}(z_1 + nz_2^d), \beta^nz_2 \big) = P(z_1,z_2) 
\end{equation}
for all $n \in \mathbb{Z}$. Replacing $z_1$ by $z_1-nz_2^d$ we rewrite this as 
\begin{equation*}
P \big( \beta^{nd}z_1, \beta^nz_2 \big)=P(z_1-nz_2^d, z_2)
\end{equation*}
and then replacing $z_1$ by $z_2^dz_1$ we have
\begin{equation}\label{2.8}
P \big( \beta^{nd}z_2^dz_1, \beta^nz_2 \big) = P \big( z_2^d(z_1-n),z_2 \big)  
\end{equation}
for all $n \in \mathbb{Z}$ and all $(z_1,z_2) \in \mathbb{C}^2$. By the finite type assumption the right hand side does not reduce to constant, by putting $z_2 = \zeta$ for any $\zeta \in \mathbb{C}^*$. This enables us to pick any non-zero complex number $\zeta$ and $z_1^0=x_1^0 + iy_1^0$ such that the polynomial in one real variable $x$ defined by
\[
R(x) =P(\zeta^d(x+iy_1^0), \zeta)
\]
is non-constant. Indeed, if such a $z_1^0$ does not exist, then it means that $P(\zeta^d z_1, \zeta)$ is independent of $\Re z_1$ for all $\zeta \in \mathbb{C}^*$. So $P(\zeta^dz, \zeta) = \sum a_j(\zeta, \ov{\zeta})(\Im z)^j$ and by lemma \ref{useful} we have for some real analytic polynomials $b_j$ that
\[
P(z_1, z_2) = \sum b_j(z_2, \ov{z}_2) (\Im \ov{z}_2^d z_1)^j.
\]
Since $P$ is real valued, $P(0,0)=b_0(0,0)= \alpha \in \mathbb{R}$ and the variety parametrised by $ t \to (t, 0, - \alpha/2)$ for $t \in \mathbb{C}$, lies inside $\partial \Omega$ contradicting its finite type condition. Thus a choice of $z_1^0$ as above to make the polynomial $R(x)$ non-constant is indeed possible. Now notice by (\ref{2.8}) that 
\[
R(x_1^0 - n) =P \big( \zeta^d(x_1^0 - n +i y_1^0), \zeta \big)=P(\zeta^d(z_1^0 - n), \zeta)=P((\beta^n \zeta)^dz_1^0, \beta^n \zeta).
\]
Now, by comparing the highest degree terms in $P$ involving $z_1, \ov{z}_1$ alone at (\ref{2.7}), we have $\vert \beta \vert =1$; so the sequence $(\beta^{nd} \zeta^d z_1^0, \beta^n \zeta)$ is bounded and hence admits a convergent subsequence to $(w_1,w_2) \in \mathbb{C}^2$, say. Correspondingly, a subsequence of $R(x_1^0 - n)$ would then converge to $P(w_1,w_2)$ contradicting the fact that 
\[
\displaystyle\lim_{n \to \infty} \vert R(x_1^0 - n)\vert = \infty.
\]
Thus we conclude that there cannot exist $g \in G \setminus G^c$ with its $'g$ conjugate to a map of the form (b).\\

{\it $'g$ is not conjugate to a map of the form (c)}: If not, after a change of variables  
\[
'g(z_1,z_2)=\big( \beta^\nu (z_1 + z_2^\nu q(z_2^r)), \beta z_2 \big)
\]
where $\beta$ is a primitive $r$-th root of unity and the $n$-th iterate of $'g$ is 
\[
'g^n(z_1,z_2)=\big( \beta^{n \nu}(z_1 + n z_2^\nu q(z_2^r)), \beta^n z_2 \big).  
\]
If $\nu \neq 0$ then $'g(0)=0$ and we have for all $n \in \mathbb{Z}$ that 
\[
P\big(\beta^{n \nu}(z_1 + n z_2^\nu q(z_2^r)), \beta^n z_2 \big) =P(z_1, z_2),
\]
which is equivalent to
\[
P \big( \beta^{ n \nu}z_1, \beta^n z_2 \big) = P \big( z_1 - n z_2^\nu q(z_2^r),z_2 \big).
\]
Replacing $z_1$ by $z_2^\nu q(z_2^r)z_1$, we have
\[
P \big( \beta^{n \nu}z_2^\nu q(z_2^r)z_1, \beta^nz_2 \big) = P \big( z_2^\nu q(z_2^r)(z_1- n),z_2 \big).
\]
The finite type assumption will ensure via an argument as in the previous case, a choice of $\zeta$ and $z_1^0=x_1^0 + iy_1^0$ such that $\tilde{\zeta}=\zeta^\nu q(\zeta^r) \neq 0$ and such that the polynomial $R(x)$ this time defined as
\[
R(x) = P(\tilde{\zeta}(x+iy_1^0), \zeta)
\]
is non-constant. Then $R(x_1^0 - n)= P(\tilde{\zeta}(z_1^0 -n), \zeta)$ and 
\[
P(\beta^{n \nu} \tilde{\zeta} z_1^0, \beta^n \zeta) = R(x_1^0 - n)
\]
and then we have a contradiction as in the previous case.\\
When $\nu=0$ so that $'g^n (z_1,z_2) = \big( z_1 + nq(z_2^r), \beta^n z_2 \big)$, we need no longer have $'g(0)=0$. We then only have that $P$ must satisfy
\[
P(z_1,z_2) - P(z_1 + nq(z_2^r), \beta^n z_2) = 2 \Re \phi \big( z_1 + (n-1)q(z_2^r), \beta^{n-1} z_2 \big)
\]
for all $n \in \mathbb{Z}$ and some holomorphic polynomial $\phi$. Replacing $z_1$ by $r q(z_2^r)z_1$ we have
\[
P(rq(z_2^r)(z_1 +n),z_2) - P(rq(z_2^r)z_1,z_2) = 2 \Re \phi \big( rq(z_2^r)z_1 + (n-1)rq(z_2^r), z_2 \big)
\]
Let $Q(z_1,z_2) = P(rq(z_2^r)z_1,z_2)$ and $\psi(z_1,z_2) =  \phi \big( rq(z_2^r)(z_1 -1),z_2 \big)$ we get
\[
Q(z_1+n,z_2)- Q(z_1,z_2) = 2 \Re \psi(z_1+n,z_2)
\]
Write
\[
Q(z_1,z_2) = \sum a_j(z_2, \ov{z}_2, \Re z_1)(\Im z_1)^j
\]
and 
\[
\Re\psi(z_1,z_2)= \sum b_j(z_2, \ov{z}_2, \Re z_1).
\]
Then
\[
\sum \big( a_j(z_2, \ov{z}_2, \Re z_1+n) - a_j(z_2, \ov{z}_2) \big) (\Im z_1)^j = \sum b_j(z_2, \ov{z}_2, \Re z_1 +n)(\Im z_1)^j
\]
Equating coefficients of $(\Im z_1)^j$ and writing 
\[
p_j=a_j-b_j = \sum\limits_{k,l} c_{k,l}(x_1) z_2^k \ov{z}_2^l
\]
and 
\[
a_j(z_2, \ov{z}_2,x_1)=\sum\limits_{k,l} d_{kl}(x_1) z_2^k \ov{z}_2^l
\]
we get $c_{jk}(x_1 + n)= d_{jk}(x_1)$ for all $n \in \mathbb{Z}$. Now, if $x_1^0$ is a root of $d_{jk}$ then $x_1^0+n$ is a root of $c_{jk}$ for all $n \in \mathbb{Z}$ showing that $d_{jk}$'s are constants and therefore
\[
P(rq(z_2^r)z_1,z_2)= \sum a_j(z_2, \ov{z}_2)(\Im z_1)^j.
\]
Lemma \ref{useful} then gives 
\[
P(z_1,z_2)= \sum a_j(z_2, \ov{z}_2) \big( \Im z_1 \ov{rq(z_2^r)} \big)^j.
\]
The complex line $t \to (t, z_2^0, -\alpha/2)$, where $z_2^0$ is a root of the non-constant polynomial $q(z_2^r)$, lies inside $\partial \Omega$ contradicting its finite type character.\\ 

{\it The case when $g \in G\setminus G^c$ is such that $'g$ is of the form (a)(i)}: 
\[
'g(z_1,z_2) = (\alpha z_1, \beta z_2)
\]
Since $'g(0)=0$ in this case, $P \circ {}'g^n=P$ for all $n \in \mathbb{Z}$, i.e.,
\[
P(\alpha^n z_1, \beta^n z_2) =P(z_1, z_2) 
\]
Consider a monomial $cz_1^{j_1} \ov{z}_1^{k_1} z_2^{j_2} \ov{z}_2^{k_2}$ occurring in $P$. The transformation 
\[
'g^n(z_1,z_2)=(\alpha^n z_1, \beta^n z_2)
\]
changes the coefficient of this monomial on the left hand side by a factor of $\alpha^{nj_1}\ov{\alpha}^{nk_1} \beta^{nj_2} \ov{\beta}^{nk_2}$. Comparing coefficients we then have
\[
\alpha^{nj_1} \ov{\alpha}^{nk_1} \beta^{nj_2} \ov{\beta}^{nk_2} =1.
\]
Now, $\Omega$ being of finite type, $P$ contains monomials involving each of $z_1,z_2$ alone, so we have $\ov{\alpha}=1/ \alpha$, $\ov{\beta}=1/\beta$ and therefore
\[
\alpha^{n(j_1-k_1)} \beta^{n(j_2-k_2)} =1  
\]
for all $n \in \mathbb{Z}$. Writing $\alpha = e^{ i \theta_1}$, $\beta=e^{ i \theta_2}$ for some $\theta_1,\theta_2 \in \mathbb{R}$, this reads
\[
e^{i[(j_1-k_1)\theta_1 + (j_2-k_2)\theta_2]} =1
\]
which implies that $(j_1-k_1) \theta_1 +(j_2-k_2) \theta_2 $ is an integer. Considering monomials in $z_1,\ov{z}_1$ alone i.e., those for which $j_2=k_2=0$ which as we know do occur in $P$, we have $\alpha^{j_1-k_1}=1$ giving rise to 2 cases: either $\alpha$ is a root of unity or else $j_1=k_1$. Write $P$ as
\[
P(z_1,z_2) =P_1(z_1,\ov{z}_1) + M(z_1, \ov{z}_1) +P_2(z_2, \ov{z}_2)
\]
where $P_1$ is the sum of all those monomials not involving $z_2,\ov{z}_2$, $P_2(z_2, \ov{z}_2)$ the sum of those that involve only $z_2, \ov{z}_2$ and $M(z_1,z_2)$ the remaining (mixed terms). Then 
\[
P_1(z_1,\ov{z}_1)=P_1(\vert z_1 \vert^2)
\]
if $\alpha$ is not a root of unity. Similarly $P_2$ will also have to be balanced, if $\beta$ is not a root of unity. In case when $\alpha$ is an $N$-th root of unity every monomial $cz_1^{j}\ov{z}_1^{k}$ in $P_1$ will have the property that $j-k$ is divisible by $N$. A similar argument holds if $\beta$ is a root of unity.\\

\noindent Now consider the case when ${\rm dim} (G)=1$. If $P$ were balanced in $z_1$, i.e., $P(z_1, \ov{z}_1) = P( \vert z_1 \vert^2)$ and $M(z_1,z_2)=M(\vert z_1 \vert^2, z_2)$ then the $1$-parameter group
\[
(z_1,z_2,z_3) \to (e^{i \theta}z_1,z_2,z_3)
\]
increases the dimension of $G$ by $1$. In particular if $\alpha$ is not a root of unity, $M$ -- which has now got to be non-zero -- cannot be balanced in $z_1$ and so has a monomial $cz_1^{j_1} \ov{z}_1^{k_1} z_2^{j_2} \ov{z}_2^{k_2}$ with $j_1 \neq k_1$ and its presence gives rise to the equation $\alpha^{j_1 - k_1} \beta ^{j_2 -k_2}=1$. Since $\alpha$ is not a root of unity $j_2 \neq k_2$. Thus we have a monomial that is neither balanced in $z_1$ nor in $z_2$ and that $\beta$ is also not a root of unity. We can now see that every mixed monomial is either balanced both in $z_1$ and in $z_2$ or neither.\\

\noindent \textbf{Case(B):} We finish by ruling out the other possibility that $'g$ is conjugate to a composition $H$ of generalized H${\rm \acute{e}}$non maps (see theorem 2.6 of \cite{FrMi}). Express $H$ as a reduced word $H=h_1 \circ \ldots \circ h_n$ where each $h_i$ belongs to the affine subgroup $A$ or to the subgroup $E$ consisting of all elementary automorphisms (but not to $S=A \cap E$), $n \geq2$ and no two of the consecutive factors belong to the same subgroup $A$ or $E$. Since we may cyclically permute the factors of the reduced word without changing its conjugacy class, we may assume that the word is cyclically reduced, i.e., the extreme factors $h_1$ and $h_n$ belong to different subgroups (among $A$ and $E$). It is now clear that a reduced word representation for $H^m$ can be obtained by juxtaposing that of $H$, $m$-many times. The degree of a polynomial automorphism is by definition the maximum of the degrees of the component polynomials and by theorem 2.1 of \cite{FrMi},
\[
{\rm deg}(H)={\rm deg}(h_1)\ldots {\rm deg}(h_n).
\]
So ${\rm deg}(H)>1$ and ${\rm deg}(H^m)=({\rm deg}H)^m$. Now (\ref{2.4}) applied to $H^m$ for any $m \in \mathbb{Z}$, reads  
\[
P\circ H^m = \mu P('z, '\ov{z}) - 2 \Re \phi('z).
\]
Suppose that the degree of the right side is $d$. Then, as $P$ has terms involving each of $z_1$ alone and $z_2$ alone, $P \circ H^m$ will have terms of degree at least $({\rm deg}H)^m$ which will be bigger than $d$ for all large $m$, giving a degree mismatch contradiction. 
\qed

\section{Models when ${\rm dim}(G)=2$ -- Proof of theorem \ref{dim2}}
\noindent Suppose first that $\rm{dim}(G)\geq 2$ ($G$ not necessarily abelian) and contains a one parameter subgroup $\{S_s\}$ that lies in the normalizer of the canonical subgroup $\{ T_t\}$ and `different' from it, meaning that their infinitesimal generators are linearly independent. Then, as observed in the previous section, the normalizer of $T_t$ is same as its centralizer and so $S_s$ commutes with $T_t$, i.e.,
\begin{align}
S_s^j(z_1,z_2,z_3 +it) &= S_s^j(z_1,z_2,z_3)  \;\; \text{ for } j=1,2  \nonumber \\
S_s^3(z_1,z_2,z_3+it) &= S_s^3(z_1,z_2) +it.
\end{align}
The first equation shows that the first two components of $S_s$ are independent of $z_3$, so $(S_s^1(z_1,z_2),S_s^2(z_1,z_2)) \in GA_2(\mathbb{C})$, while the one for the last component shows that the flow is decoupled from the $z_3$-direction, i.e.,
\begin{equation}\label{3.0}
S_s^3(z_1,z_2,z_3) = z_3 + h(s,z_1,z_2)
\end{equation}
Now since $S_s$ preserves $\partial \Omega$, we have
\[
2 \Re \big( z_3 + h_s(z_1,z_2) \big) + P \big( S_s^1(z_1,z_2),S_s^2(z_1,z_2) \big) =0
\]
whenever $2 \Re z_3 + P(z_1,z_2)=0$. So we may rewrite this as
\begin{equation} \label{meqn}
P(z_1,z_2) - P \big( S_s^1(z_1,z_2), S_s^2(z_1,z_2) \big) = 2 \Re h_s(z_1,z_2) 
\end{equation}
which shows that $h_s$ must be a polynomial in $(z_1,z_2)$ for all $s \in \mathbb{R}$. To simplify the form of $'S_s$, recall the following classification of 1-parameter subgroups of $GA_2(\mathbb{C})$ from \cite{BM}.
\begin{thm} \label{nrmlfrm}
After a change of variables, every 1-parameter subgroup of $GA_2(\mathbb{C})$ falls into one of the following categories:
\begin{itemize}
\item[(1)] $(z_1,z_2) \to (z_1,e^{bt}z_2)$ where $b \in \mathbb{C}^*$
\item[(2)] 
\begin{itemize}
\item[(a)] $(z_1,z_2) \to (z_1+t, e^{bt}z_2)$ with $b \in \mathbb{C}^*$
\item[(b)] $(z_1,z_2) \to (z_1+t,z_2)$
\end{itemize}
\item[(3)] $(z_1,z_2) \to (z_1,z_2 + p(z_1)t)$ where $p$ is a monic polynomial of degree $\geq 1$.
\item[(4)] $(z_1,z_2) \to (e^{at}z_1,e^{bt}z_2)$ where $a,b \in \mathbb{C}^*$
\item[(5)] $(z_1,z_2) \to \big(e^{at}z_1, e^{adt}(z_2 + tz_1^d) \big)$ where $a \in \mathbb{C}^*$ and $d \in \mathbb{N}$.
\end{itemize}
\end{thm}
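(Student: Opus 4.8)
This is the classification of Bass and Meisters \cite{BM}; since it is being quoted rather than reproved, I only sketch how one would establish it. Write $\{g_t\}$ for the given $1$-parameter subgroup and let $X$ be its infinitesimal generator, a polynomial vector field on $\mathbb{C}^2$ whose global flow is $g_t$. The plan has three parts: show that the flow has bounded degree, deduce from this that the subgroup is triangulable, and then read off the five normal forms from the infinitesimal generator.

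First I would prove the key \emph{bounded degree} statement $\sup_{t \in \mathbb{R}} \deg(g_t) < \infty$. One expands $g_t$ as a polynomial in $'z$ whose coefficients are functions of $t$, substitutes into $\dot g_t = X \circ g_t$, and observes that this equation forces the leading homogeneous part of $g_t$ to be highly constrained; a degree-estimate argument on that leading form then shows that $\deg(g_t)$ cannot grow with $t$. This is the only step that genuinely uses that the ambient dimension is $2$ — in $\mathbb{C}^n$ with $n \geq 3$ the analogue fails, which is why no finite list exists there — and it is the main obstacle.

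Next, using the presentation of $GA_2(\mathbb{C})$ as the amalgamated free product of the affine group $A$ and the elementary (de Jonqui\`eres) group $E$ over $A \cap E$, together with the degree formula $\deg(h_1 \circ \cdots \circ h_n) = \prod_i \deg(h_i)$ for a reduced word — both recalled from \cite{FrMi} in Case~(B) of the preceding section — any automorphism not conjugate into $A$ or into $E$ is a composition of generalised H\'enon maps whose $m$-th power has degree $(\deg)^m \to \infty$. The bounded degree step rules this out for every $g_t$, so each $g_t$ is conjugate into $A$ or into $E$; since $\{g_t\}$ is connected and abelian of bounded degree, the structure of abelian subgroups of $GA_2(\mathbb{C})$ (equivalently, a fixed-point argument for the action on the Bass--Serre tree of the amalgam) shows that one single change of variables carries the whole group into $A$ or into $E$.

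Finally I would list the normal forms. If $\{g_t\} \subset A = GL_2(\mathbb{C}) \ltimes \mathbb{C}^2$, write $g_t('z) = e^{tB}\,'z + v(t)$, put $B$ in Jordan form, absorb $v(t)$ into a translation of the origin where the eigenvalues permit, and retain the residual translational or resonant term otherwise; this yields cases (1), (2), (4), and case (5) with $d = 1$. If $\{g_t\} \subset E$, the first coordinate of $g_t$ is a $1$-parameter subgroup of $\mathrm{Aff}_1(\mathbb{C})$, hence after a change of variables the identity, $z_1 \mapsto z_1 + t$, or $z_1 \mapsto e^{at}z_1$; in each subcase the relation $g_{t+s} = g_t \circ g_s$ turns the equation for the second coordinate into a linear cocycle equation, whose solution is polynomial in $z_1$ by the bounded degree step, the resonant monomial $z_1^{d}$ (occurring precisely when the $z_1$-dilation matches the $z_2$-scaling, or when the first coordinate is trivial) producing cases (3) and (5), while the non-resonant part is removed by a de Jonqui\`eres shear $z_2 \mapsto z_2 - r(z_1)$ and falls back into (1), (2), (4). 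It then remains only to rescale $z_2$ so that the polynomial $p$ in (3) is monic and to discard the overlaps among the five families.
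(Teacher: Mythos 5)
The paper does not prove this statement at all: Theorem \ref{nrmlfrm} is quoted from Bass and Meisters \cite{BM}, so there is no in-paper argument to measure yours against. You correctly treat it as a cited result, and your outline --- bounded degree of the flow, conjugation of the whole one-parameter group into the affine or the elementary subgroup via the amalgamated free product structure of $GA_2(\mathbb{C})$ and the multiplicativity of degree on reduced words (the same facts from \cite{FrMi} that the paper itself invokes in Case (B) of Section 3), then a case analysis on the generator in which resonant monomials account for cases (3) and (5) --- is a faithful summary of the strategy of \cite{BM}. Two inessential quibbles. First, what blocks a finite list in $\mathbb{C}^n$ for $n \geq 3$ is primarily the failure of the Jung--van der Kulk amalgam structure (so your second step has no analogue there), rather than an established failure of degree boundedness for polynomial flows, which you state more confidently than the literature warrants. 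Second, in that second step the quickest route is to observe that if some $g_{t_0}$ were a composition of generalised H\'enon maps then so would be $g_{t_0/n}$ for every $n$, forcing $\deg(g_{t_0}) = (\deg g_{t_0/n})^n \geq 2^n$ --- exactly the degree-mismatch device the paper uses at the end of Section 3 --- so every element is conjugate into $A$ or $E$ and the connected, divisible group $\{g_t\}$ fixes a vertex of the Bass--Serre tree. Neither point affects the adequacy of your sketch as a gloss on a theorem the paper merely imports.
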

\noindent After a change of variables assume that our subgroup $S_s$ is such that $'S_s$ is in one of these forms and consider the cases when $'S_s(0,0)=(0,0)$. These are (1), (3) when $p(0)=0$, (4) and (5). In these cases 
\begin{equation} \label{meqn2}
P \circ {}'S_s =P
\end{equation}
holds since $P \circ {}'S_s$ will have no pluriharmonic terms, just as $P$ has none and so the right hand side of (\ref{meqn}) vanishes identically and $h_s \equiv i \beta s$ for some $\beta \in \mathbb{R}$. We shall now work out the consequences of (\ref{meqn}) and (\ref{meqn2}) on the form of $P$. Many arguments have been outlined in the previous section but there is still some room for reasonable refinement, particularly in the case of rotational symmetries and simpler arguments.

\noindent \textit{Case (i)}: Let us start by ruling out the possibility of a subgroup $S_s$ of the form (5) being contained in $G$, in which case we have for all $s \in \mathbb{R}$ and $a \in \mathbb{C}$ that 
\[
P \big( e^{as}z_1,e^{ads}(z_2 + sz_1^d) \big)=P(z_1,z_2).
\]
Write 
\[
P(z_1,z_2) = \sum a_{jklm} z_1^j\ov{z}_1^kz_2^l\ov{z}_2^m.
\]
and put $z_2=0$. Then the last equation becomes 
\[
\sum (a_{jklm}s^{l+m}) e^{\{(j+ld)a + (k+md)\ov{a}\}s} z_1^{j+ld} \ov{z}_1^{k+md} = \sum a_{jk00}z_1^j\ov{z}_1^k.
\]
Comparing the coefficient of $z_1^p \ov{z}_1^q$ on both sides now gives an equation of the form
\[
\Big( \sum_{ \substack{j+ld=p \\
                       k+md=q }} a_{jklm}s^{l+m} \Big) e^{(pa + q\ov{a})s} =a_{pq00}. 
\]
The left hand side is of the form $r(s)e^{\beta s}$ and is a constant function only if $\beta=0$ and $r(s) \equiv \rm{constant}$; $\beta=0$ means $pa+q \ov{a}=0$, i.e., $(p+q)\Re a + i (p-q) \Im a =0$ giving $\Re a=0$ and $p=q$. So,
\[
r(s)= \sum\limits_{j+ld=k+md=p} a_{jklm}s^{l+m}
\]
which gives $a_{jklm}=0$ whenever $l+m>0$. But the finite type constraint shows that we must have at least one $a_{jklm} \neq 0$ with $l+m>0$, i.e., with $l>0$ or $m>0$ since terms involving $z_2, \ov{z}_2$ alone must occur. Thus $\{ 'S_s \}$ cannot be (after our normalizing change of variables) of the form (5).\\

\noindent \textit{Case (ii)}: Next we tackle the cases when $'S_s$ is of the form (1) or (4). Then
\begin{equation}\label{prsym}
P( e^{as}z_1,e^{bs}z_2 )=P(z_1,z_2)
\end{equation}
for all $s \in \mathbb{R}$ with $a,b \in \mathbb{C}$ and at least one of them non-zero, say $b$. Write 
\[
P(z_1,z_2)=P_1(z_1) + M(z_1,z_2) +P_2(z_2)
\]
as before. Then putting $z_1=0$ in (\ref{prsym}), we have for all $s \in \mathbb{R}$ that 
\[
P_2(e^{bs}z_2, \overline{e^{bs} z_2})=P_2(z_2, \ov{z}_2).
\]
Comparison of the coefficient of every monomial of the kind $cz_2^j\ov{z}_2^k$ occurring in $P_2$ in the above equation gives rise to equations of the form 
\[
(e^{bs})^j(e^{\ov{b}s})^k =1
\]
for all $s \in \mathbb{R}$. Writing $b=x+iy$ we have 
\[
e^{(j+k)xs} e^{i(j-k)ys}=1
\]
for all $s \in \mathbb{R}$. Since $j+k>0$, $x=0$. So $y \neq 0$ and
\[
e^{i(j-k)ys}=1
\]
for all $s \in \mathbb{R}$. Therefore $j=k$, so every monomial that occurs in $P_2$ is balanced i.e., $P_2(z_2, \ov{z}_2)=P_2(\vert z_2 \vert^2)$.
A similar consideration of terms in $P_1$ shows that $a$ is also an imaginary constant and if non-zero, $P_1(z_1,\ov{z}_1)=P_1(\vert z_1 \vert^2)$. Write $a=  i \alpha $, $b= i \beta $ for some $\alpha, \beta \in \mathbb{R}$. Considering next, a monomial of the kind $cz_1^j\ov{z}_1^k z_2^l \ov{z}_2^m$ occurring in $M(z_1,z_2)$ we have
\[
e^{ i(j-k) \alpha s}e^{ i(l-m) \beta s} =1
\]
for all $s \in \mathbb{R}$ which gives
\[
(j-k) \alpha + (l-m) \beta =0.
\]
Suppose $\alpha,\beta$ are both non-zero. Then, $j=k$ implies $l=m$ and conversely, i.e., each term in $M$ is either balanced both in $z_1$ and $z_2$ or balanced neither in $z_1$ nor in $z_2$.
Now suppose $S_s$ is conjugate to a subgroup of the form (1) and assume $\beta =0$ (so $\alpha \neq 0$). Then the foregoing equation is $(j-k) \alpha =0$, i.e., $j=k$. So $P_1=P_1(\vert z_1 \vert^2)$ and the mixed terms must be balanced in $z_1$.\\

\textit{Conclusion:} If $'S_s$ is conjugate to a subgroup of the form (1) then after a change of variables,
\[
P(z_1,z_2) = P(z_1) + M(z_1, \vert z_2 \vert^2) + P_2(\vert z_2 \vert^2)
\]
while for the case when $'S_s$ is conjugate to a subgroup of the form (4), i.e., $\alpha,\beta$ both are non-zero, then $P_1=P_1(\vert z_1 \vert^2)$ and $P_2=P_2(\vert z_2 \vert^2)$. If $M \equiv 0$ or $M=M(\vert z_1 \vert^2, \vert z_2 \vert^2)$ so that 
\[
P(z_1,z_2) = P(\vert z_1 \vert^2) + M(\vert z_1 \vert^2,\vert z_2 \vert^2) +P_2(\vert z_2 \vert^2).
\]
then as is evident, ${\rm dim}(G)$ becomes at least $3$. So when ${\rm dim}(G)=2$, $M$ must be non-zero and must not be balanced. Consequently, we have an equation of the form 
\[
(j-k)\alpha + (l-m) \beta =0
\]
with $j \neq k$ (and so $l \neq m$ as well). Thus all the monomials $cz_1^{j_1} \ov{z}_1^{k_1} z_2^{j_2} \ov{z}_2^{k_2}$ occurring in $P$ are balanced with respect to the weights $(\alpha, \beta)$ for $(z_1,z_2)$. Notice that in this case $\beta/\al$ is rational. 
Needless to say, that $M$ is neither balanced in $z_1$ nor in $z_2$ and $G^c \equiv \mathbb{R} \times \mathbb{S}^1$.\\

\noindent \textit{Case(iii)}: Consider now the case when $'S_s$ is conjugate to a subgroup of the form (2). In this case we no longer have $'S_s(0,0) =(0,0)$ and therefore equation (\ref{meqn2}) no longer holds. We work with (\ref{meqn}). But before that let us examine what happens to the third component of $S_s$ -- whose form we had only pinned down when equation (\ref{meqn2}) was known to hold. To this end, check what $S_t \circ S_s = S_{t+s}$ implies for the third component. We have
\[
S_{t+s}^3(z_1,z_2,z_3) = S_t^3 \big( S_s^1(z),S_s^2(z),S_s^3(z) \big)
\]
By (\ref{3.0}),
\begin{align*}
z_3 + h(t+s,z_1,z_2) &= S_s^3 + h(t,S_s^1,S_s^2)\\
&= z_3 + h(s,z_1,z_2) + h(t,S_s^1,S_s^2).
\end{align*}
Recalling that $h(0,z_1,z_2) \equiv 0$, we have
\begin{align*}
h(t+s,z_1,z_2) -h(s,z_1,z_2) &= h(t,z_1+s,e^{bs}z_2)\\
\big( h(t+s,z_1,z_2) - h(s,z_1,z_2) \big)/t &= \big( h(t,z_1+s,e^{bs}z_2) - h(0,z_1+s,e^{bs}z_2) \big)/t
\end{align*}
and hence
\[
\partial h/ \partial t(s,z_1,z_2) = \partial h / \partial t (0,z_1+s,e^{bs}z_2).
\]
Since the right hand side is a polynomial and the anti-derivatives of $(z_1+s)^n$ and $ (e^{bs}z_2)^n$ as functions of $s$ are polynomials of $(z_1 +s)$ and $e^{bs}z_2$ respectively, we have by integrating with respect to $s$ the above equation that 
\[
h(s,z_1,z_2) = q(z_1+s,e^{bs}z_2) + C(z_1,z_2)
\]
for some $q(z_1,z_2) \in \mathbb{C}[z_1,z_2]$. Put $s=0$. Then $h_0 \equiv 0$ gives $C(z_1,z_2) = -q(z_1,z_2)$ and so 
\[
h(s,z_1,z_2) =q(z_1+s,e^{bs}z_2) - q(z_1,z_2).
\]
Hence $S_s(z_1,z_2,z_3) = \big( z_1+s,e^{bs}z_2,z_3+ q(z_1+s,e^{bs}z_2)-q(z_1,z_2) \big)$.
Now, the automorphism $ (z_1,z_2,z_3)  \to (\tilde{z}_1, \tilde{z}_2, \tilde{z}_3) = \big( z_1,z_2,z_3 -q(z_1,z_2) \big)$ conjugates $S_s$ to the automorphism
\[
(\tilde{z}_1,\tilde{z}_2) \to (\tilde{z}_1+s, e^{bs} \tilde{z}_2, \tilde{z}_3)
\]
so that if 
\[
 \tilde{\Om} = \{ z \in \mathbb{C}^3 \; : \; 2 \Re z_3 + \tilde{Q}(\tilde{z}_1, \tilde{z}_2)<0 \}
\] 
then $\tilde{Q}(\tilde{z}_1, \tilde{z}_2) = \tilde{Q}( \tilde{z}_1+s, e^{bs} \tilde{z}_2)$. Of course, $\tilde{Q}$ may have pluriharmonic terms unlike $P$ but $\tilde{\Omega}$ is of finite type. Dropping the $\tilde{}$'s and writing $Q(z_1,z_2) = \sum a_{jk}(z_1, \ov{z}_1)z_2^j \ov{z}_2^k$ for $\tilde{Q}$ we have by the last equation that
\[
\sum a_{jk} (z_1, \ov{z}_1) z_2^j \ov{z}_2^k = \sum a_{jk} (z_1 +s, \ov{z_1 + s} )e^{(bj+\ov{b}k)s} z_2^j \ov{z}_2^k.
\]
Comparing coefficients, we then have
\[
a_{jk}(z_1,\ov{z}_1) = a_{jk}(z_1+s,\overline{z_1 +s}) e^{(bj +\ov{b}k)s}
\]
for all $(j,k)$. This gives $bj +\ov{b}k =0$.\\
\textit{Sub-case (a)}: $b \neq0 $. Write $b =x+iy $ to get $(x+iy)j = - (x-iy)k$. Then $(j+k)x =0$ and $(j-k)y=0$ which gives $x=0$ and $j=k$. That is, $b$ is an imaginary constant and $a_{jk} \equiv 0$ whenever $j \neq k$ and when $j=k$ we have that the polynomials $a_j=a_{jj}$ satisfy 
\[
a_{j}(z_1, \ov{z}_1) = a_{j}(z_1+s, \overline{z_1+s})
\]
Expanding both sides in powers in $\Im z_1$ coefficients that are polynomials in $\Re z_1$, we readily get by equating coefficients that they are constants,  i.e., the $a_j$'s are free of $\Re z_1$ and hence $Q$ is of the form
\[
Q(z_1,z_2) = \sum a_j(\Im z_1) \vert z_2 \vert^{2j}.
\]
But then we clearly have three one-parameter subgroups in $G$ whose generating vector fields are linearly independent. Thus, the present case can happen only when ${\rm dim}(G) \geq 3$.\\
\textit{Sub-case(b)}: $b=0$, i.e., the subgroup $S_s$ is conjugate to the group 
\[
(z_1,z_2) \to (z_1 + s, z_2).
\]
In this case we can no longer assert that $a_{jk}=0$ for $j \neq k$ and we have
\[
a_{jk}(z_1,\ov{z}_1) = a_{jk}(z_1+s, \overline{z_1+s})
\]
which as before implies that $a_{jk}=a_{jk}(\Im z_1)$ so that $Q$ is of the form
\[
Q(z_1,z_2) = \sum a_{jk}(\Im z_1) z_2^j \ov{z}_2^k
\]
or put differently $Q(z_1,z_2) = \sum a_j(z_2, \ov{z}_2) (\Im z_1)^j$ and $G^c \simeq \mathbb{R} \times \mathbb{R}$.\\

\noindent \textit{Case (iv)}: Finally, we consider the case when there is a one-parameter subgroup of the form (3). In this case again, we do not necessarily have $S_s(0,0) = (0,0)$ as $p(0)$ may be non-zero and we follow the same procedure as in the previous case. First we examine the third component of $S_s^3$ which we know to be of the form $z_3 + h_s(z_1,z_2)$ with $h_s \in \mathbb{C} [z_1,z_2]$. Using the fact that $ S_s $ is a one-parameter group, we have as before for its third component that
\[
\partial h/ \partial t (s,z_1,z_2) =\partial h/ \partial t(0,z_1,z_2 +p(z_1)s).
\]
Since the right hand side is a polynomial and the anti-derivative of $(z_2 + p(z_1)s)^n$ considered as a function of $s$ is again a polynomial function of $(z_2 + p(z_1)s)$, integration of the last equation gives for some $q(z_1,z_2) \in \mathbb{C}[z_1,z_2]$ that
\[
h(s,z_1,z_2) = q(z_1,z_2 + p(z_1)s) + C(z_1,z_2)
\]
Putting $s=0$ and remembering $h(0,z_1,z_2) \equiv 0$ we get $C(z_1,z_2) = -q(z_1,z_2)$ and so
\[
h(s,z_1,z_2) = q(z_1,z_2 + p(z_1)s) - q(z_1,z_2)
\]
which enables to pass as before to an equivalent domain where (\ref{meqn2}) holds. Alternately we may apply (\ref{meqn}), i.e.,
\begin{equation} \label{3.3}
2 \Re \big( q(z_1, z_2+p(z_1)s) - q(z_1,z_2) \big) = P(z_1,z_2) - P(z_1,z_2 + p(z_1)s)
\end{equation}
Let 
\[
Q(z_1,z_2) = P(z_1,z_2) + 2 \Re(q(z_1,z_2)) 
\]
and note that terms involving $z_2, \ov{z}_2$ alone do occur in $Q$. For firstly they occur in $P$ and then because these are all non-pluriharmonic, they cannot be cancelled by any term in the polynomial $2 \Re q(z_1,z_2)$ which will contain only pluriharmonic terms. Next rewrite (\ref{3.3}) as
\[
Q(z_1,z_2 + p(z_1)s) = Q(z_1,z_2).
\]
So if we let $\tilde{Q}(z_1,z_2) = Q(z_1,p(z_1)z_2)$ this can be further rewritten as 
\[
\tilde{Q}(z_1,z_2 +s) = \tilde{Q}(z_1,z_2)
\]
Expand $\tilde{Q}$ as $\tilde{Q}(z_1,z_2) = \sum a_j(z_1, \Re z_2) (\Im z_2)^j$ to get
\[
\sum a_j(z_1, \Re(z_2 + s)) (\Im z_1)^j = \sum a_j(z_1, \Re z_2) (\Im z_2)^j
\]
Comparing coefficients of $(\Im z_1)^j$ on both sides we have
\[
a_j(z_1,\Re z_2 +s) =a_j(z_1, \Re z_2)
\]
for all $s \in \mathbb{R}$ which shows that $a_j$'s are independent of $\Re z_2$ and
\begin{equation}\label{3.4}
\tilde{Q}(z_1,z_2) = \sum a_j(z_1, \ov{z}_1) (\Im z_2)^j
\end{equation}
Lemma (\ref{useful}) then allows us to conclude that $Q$ must be of the form
\[
Q(z_1,z_2) = Q(z_1,p(z_1)z_2) = \sum b_j(z_1, \ov{z}_1) (\Im z_2 \overline{p(z_1)})^j
\]
for some real analytic polynomials $b_j$. Now, $Q$ is real valued, so in particular $Q(z_1,0)=b_0(z_1, \ov{z}_1)$ is also real valued. Since $p$ is a non-constant polynomial, it has at least one root say $z_1^0$. Consider the variety parametrised by $\psi( \zeta) =(z_1^0, \zeta, -\alpha/2)$ where $\alpha =b_0(z_1^0,\ov{z}_1^0) \in \mathbb{R}$ and $\zeta \in \mathbb{C}$; then
\[
\rho( \psi(\zeta)) = 2(-\alpha/2) + \sum b_j(z_1^0, \ov{z}_1^0) \big( \Im  \zeta \ov{p(z_1^0)} \big)^j =0
\]
i.e., the variety parametrised by $\psi$ lies inside $\partial \Omega$, contradicting its finite type condition. Therefore, there cannot be a subgroup in $G^c$ that can be conjugated to a subgroup of the form (3).

\section{Commuting Flows in the plane}
\noindent As a prelude to the next section we work out normal forms for commuting pairs of one parameter subgroups in $GA_2(\mathbb{C})$. A one parameter subgroup will be said to be of type $(j)$, $1 \leq j \leq 5$, if it can be conjugated to the subgroup of case $(j)$ listed in theorem (\ref{nrmlfrm}) while we say that it is in the form $(j)$ when no such change of variables is necessary i.e., it is already in that form; by form (2) we mean one of the forms (2)(a) or (2)(b). For the sake of completeness we determine the form of such subgroups that commute with a type (5) or a type (3) subgroup as well. To start with, we assume a change of variables already made so that one of the subgroups $S_s$, is in its normal form -- one out of the five given by theorem \ref{nrmlfrm} -- and we discuss the possible forms/types of the other one parameter subgroup $R_t$ that commutes with $S_s$. In all the cases, we will see that a single change of variables will suffice to put both the subgroups in their normal form.\\

\noindent \textit{Case(i)}:  Suppose $S_s(z_1,z_2) = (z_1,e^{b s} z_2)$. Then
\begin{align}
R_t^1 (z_1,z_2) &= R_t^1(z_1,e^{b s} z_2) \label{creln11}, \text{ and}\\
e^{b s} R_t^2(z_1,z_2) &= R_t(z_1,e^{b s} z_2) \label{creln12}.
\end{align}
for all $s,t \in \mathbb{R}$. Writing $R_t^1(z_1,z_2)= \sum a_{jk}(t) z_1^jz_2^k$, (\ref{creln11}) gives
\[
\sum a_{jk}(t)z_1^jz_2^k = \sum e^{k b s} a_{jk}(t)z_1^jz_2^k.
\]
Equating coefficients gives $a_{jk}(t)(e^{kbs}-1)=0$ for all $s,t \in \mathbb{R}$. Since $e^{k b s}=1$  only when $k=0$, i.e., $a_{jk} \equiv 0$ for all $k \geq 1$. Therefore, 
\[
R_t^1(z_1,z_2)= \sum a_j(t)z_1^j =p_t(z_1),
\]
say.\\
Similarly writing $R_t(z_1,z_2)= \sum b_{jk}(t)z_1^jz_2^k$ we have by (\ref{creln12}) that  $b_{jk}(t) \equiv 0$ for all $k \neq 1$ and $R_t^2(z_1,z_2)= \sum b_j(t)z_1^jz_2=z_2q_t(z_1)$, say. Now since $R_t(z_1,z_2) \in {\rm Aut}(\mathbb{C}^2)$ its Jacobian which is $q_t(z_1) \partial p_t(z_1)/\partial z_1$ must be a function of $t$ alone, i.e., independent of $z_1$. So $q_t(z_1)=q_0(t)$ and $p_t(z_1)=a(t)z_1 + b(t)$ where $a(t)$ and $q_0(t)$ are nowhere vanishing and $p_0(z_1)=z_1$, $b(0)=0$ and $a(0)=1$. Next, since $p_t$ is a one parameter subgroup we have
\[
a(s)\big( a(t)z_1 + b(t) \big) +b(s) = a(t+s)z_1 + b(t+s)
\]
which shows that $a(t)$ is a one parameter subgroup of $\mathbb{C}^*$ and so is of the form $e^{\lambda t}$ for some $\lambda \in \mathbb{C}$ and $a(s)b(t) + b(s) =b(t+s)$ which may now be recast as
\[
\big( b(t+s)-b(s) \big)/s= e^{\lambda s} \big( b(t)-b(0) \big)/s.
\]
This gives $b'(s)=ce^{\lambda s}$ where $c=b'(0)$ and therefore
\begin{align*}
b(s) &= (c/\lambda) (e^{\lambda s} -1) \text{ when } \lambda \neq 0 \\
b(s)&=cs \text{ when } \lambda=0.
\end{align*}
Since $q_0(t)$ is also a one parameter subgroup of $\mathbb{C}^*$, it is of the form $e^{\delta t}$ for some $\delta \in \mathbb{C}$ and we have that $R_t$ is of the form
\[
R_t(z_1,z_2) = \big( e^{\lambda t} (z_1 + c/\lambda) - c/\lambda, e^{\delta t}z_2 \big)
\]
when $\lambda \neq 0$. Now if we conjugate both the subgroups by the translation 
\[
T(z_1,z_2)=(z_1-c/\lambda,z_2)
\] then $R_t$ becomes $(z_1,z_2) \to (e^{\lambda t}z_1, e^{\delta t}z_2)$ (which is a type (4) subgroup if $\delta \neq0$ or else a type (1) subgroup) while $S_s$ remains as it is. When $\lambda =0$, $R_t(z_1,z_2) = (z_1 +ct,e^{\delta t}z_2)$ which is subgroup of the form (2).\\

\noindent \textit{Case(ii)}: Suppose $S_s(z_1,z_2)=(z_1+s,e^{bs}z_2)$ is a subgroup of the form (2)(a) i.e., $b \neq 0$. Then
\begin{align}
R_t^1(z_1+s, e^{bs} z_2) &= R_t^1(z_1,z_2) + s \label{creln21}, {\text{ and}} \\
R_t^2(z_1+s, e^{bs}z_2) &= e^{bs}R_t^2(z_1,z_2) \label{creln22}
\end{align}
for all $s,t \in \mathbb{R}$. Writing $R_t^1(z_1,z_2) = \sum p_t^k(z_1)z_2^k$ we have from (\ref{creln21}) by comparing coefficients of $z_2^k$ for all $k \geq 1$ that
\[
e^{kbs}p_t^k(z_1+s) = p_t^k(z_1).
\]
Then considering the zeros of $p$, we get that each $p_t^k$ is a function of $t$ alone. Next, considering a value of $t$ for which $p_t^k$ is non-zero, we have $e^{kbs}=1$ for all $s \in \mathbb{R}$, so $k=0$. Thus $p_t^k \equiv 0$ for all $k>0$. Now, note that $p_t^0(z_1+s) = p_t^0(z_1) +s$, so $p_t^0(z_1) =z_1 +f(t)$ and subsequently 
\[
R_t^1(z_1,z_2) = z_1 +f(t).
\]
Since $R_t$ is a one parameter group, we have $f(t+s) =f(t) +f(s)$ so $f(t)=ct$ for some $c \in \mathbb{C}$.\\
Next consider $R_t^2(z_1,z_2) = \sum q_t^k(z_1)z_2^k$, say. Then (\ref{creln22}) gives $e^{(k-1)bs}q_t^k(z_1+s) = q_t^k(z_1)$ which implies as before, since $b \neq 0$, that $q_t^k \equiv 0$ for all $k > 1$ while $q_t^1(z_1 +s)= q_t^1(z_1)$ says $q_t$ must be a function of $t$ alone, say $b(t)$ with $b(0)=1$. Thus $R_t^2(z_1,z_2)=b(t)z_2$ with $b(t)$ being a one parameter subgroup of $\mathbb{C}^*$, so $b(t)=e^{\lambda t}$ for some $\lambda \in \mathbb{C}$
and subsequently
\[
R_t(z_1,z_2)=(z_1+ct,e^{\lambda t}z_2).
\]
Note that when $c=0$, this is a subgroup of the form (1). Thus a subgroup commuting with a subgroup of the form (2)(a) must be necessarily of the form (2) or (1).

Now suppose that $S_s(z_1,z_2)=(z_1+s, z_2)$ then as above we have that the $p_t^k$'s are function of $t$ alone, so that 
\[
R_t^1(z_1,z_2)=z_1 + \sum p_k(t)z_2^k = z_1 +p_t(z_2)
\]
with $p_0(z_2) \equiv 0$ while the same reasoning applied to $R_t^2$ gives $R_t^2(z_1,z_2)=q_t(z_2)$. Considering then the Jacobian of $R_t$ which must be a function of $t$ alone we have $\partial q_t/ \partial z_2=r_1(t)$, say which gives $q_t(z_1,z_2)=r_1(t)z_2 + r_2(t)$. Using the fact that $R_t$ is a one parameter subgroup we may identify $p_t,q_t,r_1(t),r_2(t)$ as follows. Up till now, we have 
\[
R_t(z_1,z_2)= \big( z_1+p_t(z_2), q_t(z_2) \big).
\]
So 
\[
R_{t+s}(z_1,z_2)=\big( z_1+p_{t+s}(z_2),q_{t+s}(z_2) \big)
\]
and 
\[
R_s(R_t(z_1,z_2))= \Big( z_1 + p_t(z_2) +p_s(q_t(z_2)), q_s(q_t(z_2)) \Big)
\]
from which we have for all $s,t \in \mathbb{R}$ that
\begin{align}
q_{t+s}(z) &= q_t \circ q_s(z)  \label{qeqn}, \text{ and} \\
p_t(z) + p_s(q_t(z)) &= p_{t+s}(z) \label{peqn}
\end{align}
From (\ref{creln22}) and (\ref{qeqn}) we have
\begin{align*}
r_1(t+s) &= r_1(s)r_1(t) \\
r_1(s)r_2(t) +r_2(s) &= r_2(t+s) 
\end{align*}
which gives rise to two cases:
\begin{itemize}
\item[(i)]
\begin{align*}
r_1(t) &=e^{\lambda t} \\
r_2(t) &=(c/\lambda)(e^{\lambda t}-1)
\end{align*}
for some $\lambda \in \mathbb{C}^*$ or we have the possibility 
\item[(ii)]
\begin{align*}
r_1(t) &\equiv 1 \\
r_2(t) &=ct
\end{align*}
\end{itemize}
We tackle case (i) first. In this case $q_t(z_2)=e^{\lambda t}(z_2+c/\lambda) - c/\lambda$. Let $\tilde{p}_t(z_2) = p_t(z_2 - c/\lambda)$. Then we have from (\ref{peqn}) that 
\[
p_{t+s}(z_2 - c/\lambda) =p_t(z_2 - c/\lambda) + p_s(q_t(z_2 - c/\lambda))
\]
while $p_s(q_t(z_2 - c/\lambda))= p_s(e^{\lambda t}z_2 - c/\lambda)=\tilde{p}(e^{\lambda t} z_2)$. Therefore
\begin{equation}\label{ptileqn}
\tilde{p}_{t+s}(z_2) - \tilde{p}_t(z_2) = \tilde{p}_s(e^{\lambda t} z_2)
\end{equation}
Now suppose $\tilde{p}_t(z_2) = \sum a_j(t)z_2^j$. Noting that $\tilde{p}_0(z_2)=p_0(z_2 -c/\lambda) \equiv 0$ gives $a_j(0)=0$ while translating (\ref{ptileqn}) about $\tilde{p}$ into equations for its coefficients gives 
\[
 a_j(t+s) -a_j(t) =e^{\lambda jt}a_j(s).
\] 
Dividing by $s$ on both sides and taking limits we have $a_j'(t)=a_j'(0)e^{\lambda jt}$ showing that $a_j(t)$ are of the form $(\mu_j/j\lambda)(e^{j \lambda t} -1)$ for all $j \geq 1$ and $a_0(t) =c't$. Thus, the automorphism $R_t$ is identified to be of the form
\begin{align*}
R_t(z_1,z_2) &= \Big( z_1 + \sum (\mu_j/j \lambda)(e^{j \lambda t} -1)(z_2 + c/\lambda)^j +c't, e^{\lambda t}(z_2 + c/\lambda) -c/\lambda \Big)\\
&=\Big( z_1+\sum \nu_j\big( e^{\lambda t}(z_2+a) \big)^j - \sum \nu_j(z_2+a)^j, e^{\lambda t}(z_2+a)-a \Big)
\end{align*}
where $a=c/\lambda$ and $\nu_j=\mu/j \lambda$. If we denote the polynomial $\sum \nu_j z_2^j$ by $q(z_2)$, then first conjugating by a translation $T_a(z_1,z_2) = (z_1,z_2-a)$ transforms $R_t$ into
\[
(z_1,z_2) \to \Big(z_1 + q(e^{\lambda t}z_2) - q(z_2) + c't, e^{\lambda t}z_2 \Big)
\]
which in turn transforms via the change of variables given by the elementary map 
\[
E_q(z_1,z_2) = (z_1+q(z_2),z_2)
\]
into $(z_1,z_2) \to (z_1+c't, e^{\lambda t}z_2)$ which is of the form (2) or (1) depending on whether $c' \neq 0$ or not. Observing that both the foregoing change of variables leaves $S_s$ intact, we infer that a simultaneous conjugation of $S_s$ and $R_t$ into their normal forms has been achieved.\\

\noindent Next consider the sub-case (ii), in which case $R_t$ is of the form
\[
R_t(z_1,z_2)=(z_1+p_t(z_2), z_2 +ct).
\]
Equation (\ref{peqn}) then reads $p_t(z) + p_s(z+ct) =p_{t+s}(z)$. If $c=0$ then $p_t(z)=p(z)t$ in which case $R_t(z_1,z_2) =(z_1 +p(z_2)t, z_2)$ is already in normal form. Feeding this back into the commutation equation (\ref{creln21}), shows that $p$ must be constant but in that case $R_t$ is no different a subgroup from $S_s$. In the other case $c \neq 0$, we have 
\begin{equation} \label{peqn2}
p_t(z) + p_s(z+ct) =p_{t+s}(z).
\end{equation}
Now rewrite this as 
\[
p(t+s,z) -p(t,z) = p(s,z+ct) -p(0,z+ct).
\]
Dividing by $s$ and taking limits we have 
\[
\partial p/ \partial t(t,z) = \partial p/ \partial t(0,z+ct).
\]
Observe that the right hand side of this is a polynomial in $(z+ct)$ and so will be its anti-derivative. Therefore $p(t,z) =q(z+ct) -C(z)$ and using the fact that at $t=0$, $p(0,z) \equiv 0$ we have $C(z)=q(z)$. So $p(t,z) = q(z+ct) - q(z)$ and feeding this back into  (\ref{peqn2}) this gives
\[
 \big( q(z+ct) - q(z) \big) + \big( q(z+ct+cs) - q(z) \big) =q(z+c(t+s)) - q(z)
\]
which gives $q(z+ct)=q(z)$ i.e., $q$ is constant. Therefore, $R_t(z_1,z_2)=(z_1,z_2+ct)$ which is of the form (2)(b).\\
 
\noindent \textit{Case (iii)}: $S_s(z_1,z_2)=(z_1, z_2 +p(z_1)s)$. The commutation relations now read
\begin{align}
R_t^1(z_1,z_2 +p(z_1)s) &= R_t^1(z_1,z_2) \label{creln31}, \text{ and} \\
R_t^2(z_1,z_2+p(z_1)s) &= R_t^2(z_1,z_2) + p \big( R_t^1(z_1,z_2) \big)s \label{creln32}
\end{align} 
Replacing $z_2$ by $p(z_1)z_2$ in (\ref{creln31}) gives
\[
R_t^1(z_1,p(z_1)(z_2+s))=R_t^1(z_1,p(z_1)z_2),
\]
i.e., 
\[
r_t(z_1,z_2)=R_t^1(z_1,p(z_1)z_2)
\]
satisfies $r_t(z_1,z_2+s)=r_t(z_1,z_2)$ so $r_t$ is independent of $z_2$ and consequently $R_t^1$ is independent of $z_2$ as well. Therefore $R_t^1 \in \mathbb{C}[z_1]$. Suppose $R_t^1(z_1,z_2) = \sum a_j(t)z_1^j$ then a comparison of degrees in $R^1_{t+s}(z_1)=R^1_t(R^1_s(z_1))$ shows that $d=1$. So, $R_t^1(z)=a(t)z_1 + b(t)$ with $a(t),b(t)$ satisfying 
\[
a(t+s)z_1 + b(t+s)=a(s) \big(a(t)z_1 + b(t) \big) +b(s)
\]
which breaks into
\begin{align*}
a(t+s)&=a(t)a(s), \text{ and}\\
b(t+s)&=a(s)b(t) + b(s)
\end{align*}
which as before gives rise to two cases: \\

\noindent \textit{Sub-case(i)}:
\begin{equation}
R_t^1(z_1,z_2)= e^{\lambda t}z_1 + (c_1/\lambda)(e^{\lambda t} -1) \;\; \text{for some } \lambda \in \mathbb{C}^*, c_1 \in \mathbb{C}, \text{ or}
\end{equation}
\textit{Sub-case(ii)}:
\begin{equation}
R_t^1(z_1,z_2) = z_1 +c_1t.
\end{equation}
Now since $\deg p \geq 1$, it has a root, say $z_1^0$. Then (\ref{creln32}) gives 
\[
R_t^2(z_1^0,z_2)=R_t^2(z_1^0,z_2) +  p(R_t^1(z_1^0))s
\]
which implies $p(R_t^1(z_1^0))=0$ for all $t \in \mathbb{R}$. Hence 
\begin{equation} \label{Rconst}
R_t^1(z_1^0) \equiv z_1^0
\end{equation} Let's consider sub-case (i) now: we then have $e^{\lambda t}z_1^0 + (c_1/\lambda)(e^{\lambda t} -1) \equiv z_1^0$ which gives $z_1^0 = -c_1/\lambda$ and subsequently that $p(z_1)=(z_1 +c_1/\lambda)^d$ -- recall that $p$ is monic. A consideration of the derivative of $R_t(z) \in {\rm Aut}(\mathbb{C}^2)$ shows that $\partial R_t^2/\partial z_2$ must be independent of $z_2$ and so $R_t^2$ must be of the form 
\[
R_t^2(z_1,z_2)=c(t)z_2+d(t)+f(t,z_1).
\]
Using (\ref{creln32}) again, we have 
\[
c(t)z_2 + c(t)p(z_1)s + d(t) + f(t,z_1) = c(t)z_2 + d(t) + f(t,z_1) + p(R_t^1(z_1))s
\]
which gives $p(R_t^1(z))=c(t)p(z_1)$, i.e.,
\[
e^{\lambda dt}(z_1+c/\lambda)^d = c(t)(z+c_1/\lambda)^d
\]
so $c(t)=e^{\lambda dt}$. The constraint on these coefficients coming from the group law of $R_t$ is 
\[
c(t)d(s) + d(t)=d(t+s)
\]
giving $d(t)=(c_2/{d \lambda})(e^{d \lambda t} -1)$ for some $c_2 \in \mathbb{C}$. We use once again the group law for $R_t$ which is now in the form
\[
R_t(z_1,z_2) = \Big( e^{\lambda t}(z_1 + a)-a, e^{d \lambda t} (z_2+b) -b +f_t(z_1) \Big)
\]
-- where $b=c_2/\lambda$ and $a=c_1/\lambda$ -- so that 
\[
R_s \circ R_t(z_1,z_2) = \Big( e^{\lambda(t+s)}(z_1+a)-a,e^{d \lambda s}\big(e^{d \lambda t}(z_2+b)-b+f_t(z_1)+b \big)-b +f_s \big( e^{\lambda t}(z_1+a)-a \big) \Big)
\]
which must equal $R_{s+t}$. This equation simplifies to 
\[
f_{t+s}(z_1)=e^{d \lambda s}f_t(z_1) + f_s(e^{\lambda t}(z_1+a)-a)
\]
which gives for $g_t(z_1)=f_t(z_1-a)$ that
\[
g_{t+s}(z_1)=e^{d \lambda s}g_t(z_1) + g_s(e^{\lambda t} z_1)
\]
Expanding $g_t(z_1) =\sum a_j(t)z_1^j$ and converting the above equation to those about the $a_j$'s we have
\begin{equation} \label{aeqn}
a_j(t+s) = e^{d \lambda s}a_j(t) + a_j(s)e^{\lambda jt}.
\end{equation}
Rewriting this as 
\[
\big( a_j(t+s) - a_j(t) \big)/s = a_j(t) (e^{d \lambda s} -1)/s + (a_j(s)/s) e^{\lambda jt}
\]
and noting that $a_j(0)=0$, this gives $a_j'(t) - d \lambda a_j(t) = a_j'(0)e^{\lambda jt}$
whose integrating factor is $e^{-d \lambda t}$. We therefore have 
\begin{align}
a_j(t) &=e^{d \lambda t} \Big( \mu_j/\lambda (j-d) e^{\lambda(j-d)t} + \mu_j/ \lambda(j-d) \Big) \;\; \text{ when } j \neq d, \text{ and} \\
a_d(t) &= \mu_d e^{d \lambda t} t 
\end{align}
Next, plugging in the above expression for $a_j$ into (\ref{aeqn}) we have 
\[
C_je^{j \lambda (t+s)} - C_j = C_j e^{\lambda(ds+jt)} - C_je^{d \lambda s} +C_je^{j \lambda (t+s)} - C_j e^{j \lambda t}
\]
which simplifies to $C_j(e^{\lambda ds}-1)(e^{\lambda jt}-1) =0$ for all $j \neq d$ and then it can be easily seen that $a_j \equiv 0$ for all $0 \leq j <d$. Thus $g_t(z_1)=\mu_d t e^{d \lambda t} z_1^d$ and subsequently $f_t(z_1)=\mu_d t e^{d \lambda t} (z_1+a)^d$ and thereafter 
\[
R_t(z_1,z_2) = \big( e^{\lambda t}(z_1+a) -a, e^{d \lambda t} (z_2 +b + \mu_dt(z_1+a)^d) -b \big)
\]
Now if we conjugate by the translation $T_t(z_1,z_2)=(z_1-a,z_2-b)$ then $R_t$ in the new coordinates looks like 
\[
(z_1,z_2) \to (e^{\lambda t} z_1, e^{d \lambda t}(z_2+t z_1^d) ),
\]
a subgroup of the form (5) while the fact that $S_s$ has admitted such an $R_t$ to commute with it forces $S_s$ to be of the form $(z_1,z_2) \to (z_1, z_2 + (z_1+a)^ds)$ a very special sub-case of the form (3). Moreover as we see in this case too, there is a single change of variables to simultaneously normalize both the subgroups.\\
Now let's take up the pending sub-case (ii): $R_t^1(z_1,z_2)=z_1+ct$. We begin again by looking at (\ref{Rconst}), which gives this time that $c=0$, so $R_t^1(z_1,z_2) \equiv z_1$. Then (\ref{creln32}) reads
\[
R_t^2(z_1,z_2+p(z_1)s) = R_t^2(z_1,z_2) + p(z_1)s
\]
Differentiating with respect to $z_2$ gives
\begin{equation}\label{R2nderiv}
\partial R_t^2/\partial z_2(z_1,z_2+p(z_1)s) = \partial R_t^2/\partial z_2(z_1,z_2), \text{ and}
\end{equation}
differentiating with respect to $s$ gives
\[
\partial R_t^2/\partial z_2(z_1,z_2+p(z_1)s) p(z_1) = p(z_1).
\]
So $\partial R_t^2(z_1,z_2+p(z_1)s) \equiv 1$ and then by (\ref{R2nderiv}), we get $\partial R_t^2/\partial z_2 \equiv 1$. Integrating this gives \[
R_t^2(z_1,z_2)=z_2+\tilde{g}(z_1,t). 
\]
Using the group law for $R_t$ we have
\[
\tilde{g}(z,t+s) = \tilde{g}(z,t) + \tilde{g}(z,s)
\]
from which $\tilde{g}(z_1,t)=tq(z_1)$ for some $q(z_1) \in \mathbb{C}[z_1]$. Thus the change of variables that normalises $S_s$ also serves to put $R_t$ in its normal form (3) or a one parameter family of translations.\\

\noindent \textit{Case (iv)}:
Now let $S_t(z_1,z_2)=\big( e^{at}z_1,e^{adt}(z_2+tz_1^d) \big)$ where $a \neq 0$. Then
\begin{align}
R_s^1(e^{at}z_1,e^{adt}(z_2+tz_1^d)) &= e^{at}R_s^1(z_1,z_2) \label{creln41}, \text{ and}\\
R_s^2(e^{at}z_1,e^{adt}(z_2+tz_1^d)) &= e^{adt} \big( R_s^2(z_1,z_2) + t (R_s^1(z_1,z_2))^d \big) \label{creln42}.
\end{align}
Put $z_2=0$ in (\ref{creln41}) to get $R_s^1(e^{at}z_1,te^{adt}z_1^d) = e^{at}R_s^1(z_1,0)$. Split $R_s^1$ into components depending precisely on the variables as indicated: 
\[
R_s^1(z_1,z_2) = R_s^{11}(z_1) + R_s^{112}(z_1,z_2) + R_s^{12}(z_2) + \phi(s),
\]
-- where, for instance $R_s^{112}$ denotes the sum of all those monomials in $R_s^1$ depending on $z_1$ and $z_2$ while $R_s^{12}$ is the sum of all those monomials in $R_s^1$ depending on $z_2$ alone. Noting that $R_s^1(z_1,0)=R_s^{11}(z_1) + \phi(s)$ we have
\[
R_s^{11}(e^{at}z_1) + R_s^{112}(e^{at}z_1,te^{adt}z_1^d) +R_s^{12}(te^{at}z_1^d) + \phi(s) = e^{at}\big( R_s^{11}(z_1,0) + \phi(s) \big).
\]
Every monomial in the middle two summands on the left has its coefficient divisible by $t$ and so must cancel each other. This gives 
\begin{equation}
R_s^{11}(e^{at}z_1) + \phi(s) = e^{at}R_s^{11}(z_1) + e^{at}\phi(s).
\end{equation}
As $a \neq 0$, $\phi \equiv 0$ and $R_s^{11}$ is linear: $R_s^{11}=a(s)z_1$, say. So
\begin{equation} \label{R1brkup}
R_s^1(z_1,z_2) =a(s)z_1 + R_s^{112}(z_1,z_2) + R_s^{12}(z_2).
\end{equation}
Put $z_1=0$ in (\ref{creln42}) to get
\[
R_s^2(0,e^{adt}z_2) = e^{adt}R_s^2(0,z_2) + te^{adt}(R_s^1(0,z_2))^d
\]
which gives $R_s^1(0,z_2) =0$, so $R_s^{12}(z_2) \equiv 0$ which together with (\ref{R1brkup}) show that $R_s^1$ is now in the form
\[
R_s^1(z_1,z_2) = a(s)z_1+ R_s^{112}(z_1,z_2)
\]
and 
\begin{equation} \label{R2sym}
R_s^2(0,e^{adt}z_2) = e^{adt}R_s^2(0,z_2)
\end{equation} 
which implies that the pure $z_2$ part in $R_s^2$ is linear and $R_s^2$ has no `constant' term (i.e., term depending on $t$ alone) so that we may write $R_s^2$ in a manner similar as before, as 
\[
R_s^2(z_1,z_2)=R_s^{21}(z_1) + R_s^{212}(z_1,z_2) + e^{\lambda_2 s}z_2.
\]
(so for instance, $R_s^{212}$ denotes that part of $R_s^2$ that depends both on $z_1$ and $z_2$). Now using the group law for $R_s$ we have
\[
a(s+t)z_1 + R^{112}_{s+t}(z_1,z_2) = a(t) (a(s)z_1 + R_s^{112}(z_1,z_2) ) + R_t^{112}(R_s^1,R_s^2).
\]
Comparing pure $z_1$ terms of degree $1$ on both sides we have $a(s+t) = a(s)a(t)$ and since $R_0^1(z_1,z_2) =z_1$, $a(0)=1$. So $a(t)=e^{\lambda_1t}$ for some $\lambda_1 \in \mathbb{C}$.  Now note that (\ref{creln41}) takes the form
\begin{equation}\label{Rs112}
R_s^{112}(e^{at}z_1,e^{adt}(z_2+tz_1^d)) + e^{\lambda_1 t}(e^{at}z_1) = e^{at}R_s^{112}(z_1,z_2) + e^{at}(e^{\lambda_1t}z_1).
\end{equation}
Take any monomial $m=cz_1^jz_2^k$ in $R_s^{112}$ of the highest degree in $z_2$. Then on the left this monomial transforms to $e^{jat}e^{kadt}cz_1^j(z_2+tz_1^d)^k$ which contains the monomial $m'=e^{jat}e^{kadt}cz_1^jz_2^k$. Since $S_t(z_1,z_2)$ is linear in $z_2$ and the monomial $m$ is of the highest degree in $z_2$, $m'$ does not get cancelled on the left or combine with any other monomial. So a comparison of coefficient of this monomial on both sides of the last equation (\ref{Rs112}) gives 
\[
e^{(j+kd)at} = e^{at},
\]
so $j+kd=1$ which implies that either $j=1,k=0$ or $k=d=1,j=0$ both of which mean that $m$ is not mixed and this is a contradiction. So $R_s^{112} \equiv 0$. Therefore $R_s^1(z_1,z_2) = e^{\lambda_1 s} z_1$. Next, use the group law for $R_s$ to get
\begin{multline*}
R_s^{21}(R_t^1(z_1)) + R_s^{212}(R_t^1(z),R_t^2(z_1,z_2)) + e^{\lambda_2 s} \big( e^{\lambda_1 t} z_2 + R_t^{21}(z_1) + R_t^{212}(z_1,z_2) \big) =\\ R_{s+t}^{21}(z_1) + R_{s+t}^{212}(z_1,z_2) + e^{\lambda_2(s+t)}z_2
\end{multline*}
which gives on comparing terms involving $z_1$ alone that 
\[
R_s^{21}(e^{\lambda_1 t}z_1) + e^{\lambda_2 s}R_t^{21}(z_1) + R_s^{212}(e^{\lambda_1 t} z_1, R_s^{21}(z_1)\big) = R_{s+t}^{21}(z_1).
\]
Now note that the degree of the terms coming from the third summand on the left are strictly bigger than that of $R_s^{21}$. Therefore $R_s^{212} \equiv 0$  and 
\begin{equation}\label{Rs21eqn}
R_s^{21}(e^{\lambda _1 t} z_1) + e^{\lambda_2 s} R_t^{21}(z_1) = R_{s+t}^{21}(z_1).
\end{equation}
Now, put $z_2 =0$ in (\ref{creln42}) and differentiate with respect to $t$ to get
\begin{multline*}
ae^{at}z_1\partial R_s^2/\partial z_2(e^{at}z_1,e^{adt}tz_1^d) + (e^{adt}z_1^d + adte^{adt}z_1^d)\partial R_s^2/\partial z_2 (e^{at}z_1,e^{adt}z_1^d) = \\ ade^{adt}R_s^2(z_1,0) + (R_s^1(z_1,0))^d(e^{adt}+ ade^{adt}t).
\end{multline*}
Put $t=0$ to get
\begin{equation} \label{Rs21}
az_1 \partial R_s^{21}/\partial z_1 (z) + e^{\lambda_2 s}z_1^d = ad (R_s^{21}(z_1)) + e^{\lambda_1 ds}z_1^d
\end{equation}
where $R_s^{21}$ denotes the part of $R_s^2$ involving $z_1$ terms alone i.e., $R_s^{21}(z_1)=R_s^2(z_1,0)$ giving $\partial R_s^2/\partial z_1 = dR_s^{21}/\partial z_2$. Expand $R_s^{21}(z_1) = \sum c_j(s)z_1^j$ and recall that by (\ref{R2sym}) and $R_s^2(0,z_2)=e^{\lambda_2 s}z_2$ for some $\lambda_2 \in \mathbb{C}$. Then
\[
\partial R_s^{21}/\partial z_1(z_1) =\sum jc_j(s)z_1^{j-1}
\]
So (\ref{Rs21}) reads 
\[
\sum ac_j(s)(j-d)z_1^j = (e^{\lambda_2 s} - e^{\lambda_1 ds}) z_1^d
\]
which gives $c_j(s)=0$ for all $j \neq d$ as $a \neq 0$ and $e^{\lambda_2 s} = e^{\lambda_1 ds}$, so $\lambda_2= d \lambda_1$.
Next, getting back to (\ref{Rs21eqn}) we have for the coefficient $c_d$ that 
 \[
 c_d(s)e^{d \lambda_1 t} + e^{d \lambda_1 s}c_d(t) = c_d(s+t)
\]
Rewriting this as
\[
(c_d(t+s) - c_d(t) )/s = c_d(t)\big( e^{d \lambda_1 s} -1 \big)/s + e^{d \lambda_1 t}(c_d(s)/s)
\]
gives $c_d'(t) - (d\lambda)c_d(t) = c_d'(0)e^{d \lambda t}$. So $c_d(t) = e^{d \lambda t}(\mu_1t + \mu_2)$ where $\mu_1 =c_d'(0)$ and $\mu_2=c_d(0)$. The functional equation for $c_d(t)$ also gives $c_d(0)=0=\mu_2$. So $c_d(t)=\mu_1te^{d \lambda t}$. Thus $R_s^2(z_1,z_2)=e^{d \lambda_1 s}z_2 + \mu_1se^{d \lambda s}z_1^d$, i.e.,
\[
R_s(z_1,z_2)=(e^{\lambda_1 s}z_1, e^{d \lambda_1 s}(z_2 + \mu_1 s z_1^d))
\]
which is of the form (5) and of the same degree $d$ as $S_s$, unless $\lambda_1=0$ in which case it is in the form (3).\\

\noindent \textit{Case(v)}: The remaining case is when $S_s$ is in the form (4). Since we now know the possible `commuting match' for each of the other subgroups, it is enough to consider the case when $R_t$ is also of type (4). The commutation relations read
\begin{align}
e^{as}R_t^1(z_1,z_2) = R_t^1(e^{as}z_1,e^{bs}z_2) \label{creln51}, \text{ and} \\
e^{bs}R_t^2(z_1,z_2) = R_t^2(e^{as}z_1,e^{bs}z_2) \label{creln52}.
\end{align}
which simply mean that $R_t$ is a weighted homogeneous map with respect to the weight $(a,b)$ with the weight of the components $R_t^1,R_t^2$ also being $a$ and $b$ respectively. More explicitly, writing $R_t^1(z_1,z_2)=\sum\limits_{jk} a_{jk}(t)z_1^jz_2^k$ we have by (\ref{creln51}) that 
\[
e^{as}a_{jk}(t)= e^{(ja+kb)s} a_{jk}(t)
\]
for all $t \in \mathbb{R}$.
So, if $(j,k)$ is such that $a_{jk}(t) \neq 0$ we have 
\begin{equation} \label{indexeqn}
(j-1)a + kb=0
\end{equation}
Hence, $k=0$ if and only if $j=1$. Thus if $a_{01}(t)=0$, there is no linear term in the second summand in
\[
R_t^1(z_1,z_2)=a_{10}(t)z_1 + z_2 \Big( \sum\limits_{j,k \geq 0} a_{jk}(t)z_1^jz_2^k \Big)
\]
which is a weighted homogeneous polynomial with respect to the weight $W_S = (a,b)$ of weight $a$. Similarly from (\ref{creln52}) we have 
\[
R_t^2(z_1,z_2) = b_{01}(t)z_2 + z_1 \Big( \sum\limits_{j,k \geq 0} b_{jk}(t)z_1^j z_2^k \Big)
\]
which is also homogeneous with respect to $W_S$ of weight $b$. Note that $R_t(0)=0$. Now suppose 
\begin{equation} \label{Rdefn2}
R_t = H \circ Q_t \circ H^{-1}
\end{equation} 
where $Q_t(z_1,z_2) = (e^{ct}z_1, e^{dt} z_2)$. Noting that $R_t(0)=0$ we have $H \circ Q_t \circ H^{-1}(0)=0$. Differentiating this with respect to $t$ gives
\[
DH \Big( Q_t(\zeta) \Big)\big( ce^{ct}\zeta_1,de^{dt} \zeta_2)=0
\]
where $\zeta=H^{-1}(0)$. Further putting $t=0$ gives $DH(\zeta)(\zeta)=0$ which by the invertibility of $DH$ implies that the origin is fixed by $H$ and by (\ref{Rdefn2}),
\[
DH(0) \circ Q_t(0) \circ DH^{-1}(0) = DR_t(0).
\]
Suppose 
\begin{equation*}
DH(0) =  \Big( \begin{matrix}
A & B \\ C & D
\end{matrix} \Big)  \; \text{ and }\;
DR_t(0)=  \Big( \begin{matrix}
a_{10}(t) & a_{01}(t) \\ b_{01}(t) & b_{01}(t)
\end{matrix} \Big)
\end{equation*}
and $\delta =AD-BC = {\rm \det}DH(0)$. Then,  
$DH^{-1}(0)= 1/\delta \bigl( \begin{smallmatrix}
D & -B \\ -C & A
\end{smallmatrix} \bigr)$ and 
\begin{equation} \label{matrix1}
\Big(
\begin{matrix}
A & B \\ C & D
\end{matrix}
\Big)
\Big(
\begin{matrix}
e^{ct} & {} \\ {} & e^{dt}
\end{matrix}
\Big)
\Big(
\begin{matrix}
D & -B \\ -C & A
\end{matrix}
\Big)
= \delta
\Big(
\begin{matrix}
a_{10}(t) & a_{01}(t) \\ b_{10}(t) & b_{01}(t)
\end{matrix}
\Big)
\end{equation}
which gives
\begin{equation} \label{matrix}
\Big(
\begin{matrix}
AD e^{ct} - BCe^{dt} & AB(e^{ct} - e^{dt}) \\ CD(e^{ct}- e^{dt}) & ADe^{ct} - BC e^{dt}
\end{matrix}
\Big)
= 
\Big(
\begin{matrix}
\delta a_{10}(t) & \delta a_{01}(t) \\ \delta b_{10} & \delta b_{01}(t)
\end{matrix}
\Big).
\end{equation}
{\it Sub-case (a)}: $a \neq b$. Then since $R_t^1$ is weighted homogeneous of weight $a$, $a_{01} \equiv 0$, similarly $b_{10} \equiv 0$ and the matrix on the right of (\ref{matrix}) is diagonal. Looking at the off-diagonal entries in the last matrix equality gives $AB=0=CD$ provided $c \neq d$ and in such a case $AD-BC \neq 0$ tells that the matrix for $DH(0)$ is either diagonal or anti-diagonal. Let us assume then that $c \neq d$ and by conjugation of $H$ with the flip $F(z_1,z_2)=(z_2,z_1)$ if necessary that
\begin{align} \label{Heqn}
H_1(z_1,z_2) &= Az_1 + \text{ higher degree terms}, \text{ and} \nonumber \\
H_2(z_1,z_2) &= Bz_2 + \text{ higher degree terms},
\end{align}  
i.e.,  $B=C=0$ and $\delta = AD$. By (\ref{matrix}), $a_{10}(t)= e^{ct}$, $b_{10}(t)=e^{dt}$ and the similarity of matrices in (\ref{matrix1}) shows that $\delta=1$ and by the same equation, all these three conclusions are valid even when $c=d$ (we look at the diagonal entries in (\ref{matrix}) and so we free ourselves of the assumption $c \neq d$). Notice that the higher degree terms in the expansion of $H$ in (\ref{Heqn}), may well be of weight lower than those of the variables when the weights assigned to the variables are real numbers. However we shall now show that the lowest weights component of $H$ must be a weighted homogeneous mapping either with respect to $W_S=(a,b)$ with the weights of the components being $a$ and $b$ or with respect to $W_R=(c,d)$ with the weights of the components being $c$ and $d$. To this end recall (\ref{Rdefn2}) and rewrite it as
\begin{align}
a_{10}(t)(H_1(z_1,z_2)) + A_t(H_1,H_2) &= H_1(e^{ct}z_1,e^{dt}z_2) \label{Aeqn1}, \text{ and} \\
b_{01}(t)(H_2(z_1,z_2)) + B_t(H_1,H_2) &= H_2(e^{ct}z_1,e^{dt}z_2) \label{Beqn1},
\end{align}
where 
\begin{align*}
A_t(z_1,z_2) &=\sum\limits_{(j,k) \in S_1} a_{jk}(t) z_1^j z_2^k, \text{ and} \\
B_t(z_1,z_2) &= \sum\limits_{(k,j) \in S_2} b_{jk}(t) z_1^j z_2^k.
\end{align*}
for some finite subsets $S_1,S_2$ of $S=\{ (l,m) \; : \; l \geq 0, l\neq1, m\geq1\}$ over which the sums in the definition of $R_t$ run, i.e.,
\begin{align} 
(j,k) &\in S_1 \; \text{ only if }\;(j-1)a + kb=0 \label{S1} \\
(j,k) &\in S_2 \; \text{ only if} \; ja + (k-1)b=0 \nonumber
\end{align}
Since $H$ is an open map, note that $A_t \circ H \equiv 0$ if and only if $A_t \equiv 0$.\\
First we deal with the case $A_t \not \equiv 0$. Then $A_t \circ H \not\equiv 0$; rewrite (\ref{Aeqn1}) as
\begin{equation} \label{wtcmprsn}
A_t(H_1(z), H_2(z)) = H_1(e^{ct}z_1,e^{dt}z_2) - e^{ct}H_1(z_1,z_2)
\end{equation}
Contemplate a weighted homogeneous expansion in (\ref{Aeqn1}) with respect to $(\Im a, \Im b)$ -- recall by (\ref{S1}) that the elements in $S_1$ satisfy the same equation with $a,b$ replaced by their imaginary or real parts. If $H^{(l_1)},H^{(l_2)}$ (resp. $l_1,l_2$) are the lowest weight components (resp. lowest weights) in the weighted homogeneous expansion of $H_1$ and $H_2$ respectively, then note that the right hand side in (\ref{wtcmprsn}) is weighted homogeneous of weight $l_1$ while for the left, the lowest weight component of $A_t \circ H$ comes from $A_t \circ H^{(L)}$ but apriori need not equal it -- note that the lowest weight part in the polynomial $a_{jk}(t)H_1^jH_2^k$ is indeed $a_{jk}(t)(H_1^{(l_1)})^j(H_2^{(l_2)})^k$ and it is (weighted homogeneous) of weight $jl_1 + kl_2$, possibly bigger than $l_1$. Let us denote by 
\[
S^L_1=\{ (j,k) \in S_1 \; : \; jl_1 + kl_2=l_1 \},
\]
the set of indices which give the lowest weight in $A_t \circ H^{(L)}$. Note that this lowest weight must be non-zero by definition. So $S^L_1 \neq \phi$. Pick any $(j_0,k_0) \in S^L_1$. Then by definition $(j_0 -1,k_0)$ gives a non-trivial element in the kernel of the linear map represented by
\begin{equation}\label{matrix3}
\Big(
\begin{matrix}
a_y & b_y \\
l_1 & l_2
\end{matrix} \Big).
\end{equation}
Therefore the coloumns are proportional: $l_2/l_1=b_y/a_y$. Further note that this ratio is $-(j_0-1)/k_0$. Since every index in $S_1$ is in the kernel of the first row, it must now be in that of the second as well. This means that $S_1=S_1^L$. So in particular the ratio $-(j-1)/k$ is the same for all members of $S_1$. We also have that the lowest weight component of $A_t \circ H$ is indeed $A_t \circ H^L$ and it follows by (\ref{Aeqn1}) that 
\begin{equation} \label{HL}
R_t \circ H^{(L)} = H^{(L)} \circ Q_t.
\end{equation} 
Now by the weighted homogeneity of $H^{(L)}$, it commutes with
\begin{equation*}
{}^yS_s=
\Big(
\begin{matrix} 
e^{a_ys} & 0 \\
0 & e^{b_ys}
\end{matrix}
\Big)
\end{equation*}
upto an exponent $\mu=l_2/b_y=l_1/a_y$, i.e., $H \circ {}^y S_s = ({}^y S_s)^\mu \circ H^{(L)}$. However since $H^{(L)}$ is holomorphic, its weight is the same as its signature and so
\begin{equation} \label{Snor}
H^{(L)} \circ S_s^y= \big( S_s^y \big)^{\mu} \circ H^{(L)},
\end{equation}
where
\begin{equation}
S_s^y=
\Big(
\begin{matrix}
e^{ia_ys} & 0 \\
0 & e^{ib_ys}
\end{matrix}
\Big)
\end{equation}
as well. Since $H^{(L)}(0)=0$ we have
\[
DH^{(L)}(0)  \circ S_s^y \circ (DH^{(L)}(0))^{-1} = \big( S_s^y \big)^{\mu}.
\]
Thus $S_s^y$ and $(S_s^y)^{\mu}$ are similar and their eigenvalues must match at least as unordered pairs and two cases arise: Either $\mu=1$ or $\mu =- 1$ and $b_y= -a_y$. When $\mu=1$, $(l_1,l_2)=(a_y,b_y)$ and in the other case $\mu=-1$, $(l_1,l_2)=(-a_y,a_y)$ which is not consistent with the pair of equations at (\ref{Heqn}). Thus $\mu=1$, $H^{(L)}$ commutes with $S_s^y$ and it is now apparent from (\ref{HL}) that we have the simultaneous conjugation of both the subgroups $R_t$ and $S_s^y$ (which is already in normal form) by $H^{(L)}$ which is a weighted homogeneous transformation with respect to $(a_y,b_y)$. Now let
\begin{equation}
S_s^x=
\Big(
\begin{matrix}
e^{a_xs} & 0 \\
0 & e^{b_xs}
\end{matrix}
\Big)
\end{equation}
and recall that $b_x/a_x= b_y/a_y=-(j-1)/k$ for all $(j,k) \in S_1$ and so in particular $b_y/b_x=a_y/a_x =\alpha$, say. Then $S_s^x=({}^yS_s)^{1/\alpha}$ and consequently, $H^{(L)}$ commutes with $S_s^x$ as well. So
\[
H^{(L )} \circ S_s^x S_s^y = S_s^xS_s^y \circ H^{(L)}
\]
Since $S_s^xS_s^y=S_s$, this completes together with (\ref{HL}), the verification of the possibility of simultaneous conjugation of both the type (4) subgroups (and their real and imaginary parts) to their normal forms. Further this case, namely $A_t \not \equiv 0$ or in other words, the occurrence of a non-linear commuting conjugate of the type (4) family for $S_s$ arises only when $b/a \in \mathbb{Q}^*$. \\
We may now finish the sub-case (a) by observing that in the above argument we did not have to bother about $B_t$ being zero or not; now if $A_t$ happens to be zero, we apply the above argument to $B_t$ using (\ref{Beqn1}) if we have $B_t \not  \equiv 0$ else both $A_t, B_t \equiv0$ and it is plain from equations (\ref{Aeqn1} -\ref{Beqn1}) that $H$ is weighted homogeneous with respect to $W_Q=(c,d)$ with the weights of its components being $c$ and $d$ respectively. Owing to the weights of the components of $H$ being the same as that assigned to the variables, it follows that $H$ commutes with $Q_t$ (as in the above arguments) so that by (\ref{Rdefn2}), $Q_t=R_t$ meaning that $R_t$ was already in normal form!\\ 

{\it Sub-case(b)}: $a=b$. In this case, we have by (\ref{indexeqn}) that $j+k=1$, for all indices $(j,k)$ occurring in the definition of both the components of $R_t$, which means that $\{ R_t,S_s \}$ is a family of commuting (diagaonalizable) linear maps, which as we know can be simultaneously diagonalized. \\
\qed
\begin{rem} \label{type 4}
We observe that in the arguments above, we have not made crucial use of the hypothesis that $R_t$ was conjugate to its normal form by a {\it polynomial} automorphism. In particular in case $(v)$, we did not require that $H$ be a polynomial mapping and so it follows that two commuting one parameter subgroups, each of which are conjugate in ${\rm Aut}(\mathbb{C}^2)$ to a one parameter subgroup of $D_2(\mathbb{C})$, the group of all diagonal linear operators on $\mathbb{C}^2$, can indeed be simultaneously normalized. If it is further known that one of them is conjugate to its normal form in $GA_2(\mathbb{C})$, the other already in normal form, then the former can now be normalized via a weighted homogeneous polynomial automorphism -- this can be seen by recalling especially in the case when $H$ was non-linear that the lowest weights of the components of $H$ (while expanding with respect to $(a_y,b_y)$, say) were pinned down in the foregoing argument to be $(l_1,l_2)=(a_y,b_y)$. Noting that nothing more special about the lowest weight was used than the fact that the extremal (either lowest or maximal) weight component of $m \circ H = cH_1^jH_2^k$ for a monomial $m=cz_1^jz_2^k$ is $c( H_1^{(e_1)} )^j( H_2^{(e_2)} )^k$ where $H_1^{(e_1)}, H_2^{(e_2)}$ are the extremal weight components of $H$ we conclude that that the maximal weights $(m_1,m_2) = (a_y,b_y)$ as well and consequently the desired homogeneity of $H$. Finally, it is only a matter of replacing two tuples by $n$-tuples to see (for instance the matrix at (\ref{matrix3}) gets replaced by a $2 \times n$-matrix of rank one) that we may simultaneously conjugate two commuting one parameter groups both in $S_n(\mathbb{C})$, the conjugacy class of $D_n(\mathbb{C})$ in ${\rm Aut}(\mathbb{C}^n)$, to their diagonal forms. Finally, note that by (\ref{indexeqn}) or the pair at (\ref{S1}), that if $b/a>0$ then $a=b$, $R_t$ must be a linear group and therefore $H$ can be taken to be linear. 
\end{rem}
\noindent Let us now record all possible commuting pairs of one parameter subgroups as follows.
\begin{thm} \label{nrmlfrm2}
Suppose that $S_s,R_t$ is a pair of commuting one parameter subgroups of $GA_2(\mathbb{C})$. Then, there is a change of variables to transform both these subgroups simultaneously into one of the normal forms given by the following collection of unordered pairs,
\[
\Big\{ \{1,1\},\{1,2\}, \{1,4\}, \{2,2\},\{4,4\},  \{2b,3 \} , \{3,3\}, \{3,5\}, \{5,5\} \Big\}.
\]
In the case when the pair is conjugate to $\{3,5\}$ or $\{5,5\}$ the subgroups of the forms $(3)$ and $(5)$ herein are of the same degree.
\end{thm}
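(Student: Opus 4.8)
The plan is to read Theorem~\ref{nrmlfrm2} off the case analysis just carried out. By Theorem~\ref{nrmlfrm}, after one change of variables we may assume that $S_s$ is already in one of the five normal forms $(1)$--$(5)$; the preceding discussion, Cases (i) through (v), then consists precisely of determining, for each such $S_s$, every one-parameter subgroup $R_t$ commuting with it and of checking that a \emph{single} further change of variables brings the pair $\{S_s,R_t\}$ simultaneously into normal form. Hence the proof reduces to collecting outcomes.

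First I would record the admissible matchings. When $S_s$ is of type $(1)$, Case (i) shows (after the translation displayed there, which leaves $S_s$ fixed) that $R_t$ is of type $(1)$, $(2)$ or $(4)$, giving $\{1,1\}$, $\{1,2\}$, $\{1,4\}$. When $S_s$ is of type $(2)$, Case (ii) shows $R_t$ is of type $(1)$ or $(2)$, giving $\{1,2\}$ and $\{2,2\}$. When $S_s$ is of type $(3)$, Case (iii) shows $R_t$ is a translation group (the sub-case in which the polynomial $q$ is constant), or of type $(3)$, or---in the sub-case that forces $S_s$ itself into the special form $(z_1,z_2)\to\big(z_1,z_2+(z_1+a)^{d}s\big)$---of type $(5)$ of the same degree $d$; this yields $\{2b,3\}$, $\{3,3\}$ and $\{3,5\}$. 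When $S_s$ is of type $(5)$, Case (iv) shows $R_t$ is of type $(3)$ (precisely when the linear exponent vanishes) or of type $(5)$ of the same degree, giving $\{3,5\}$ and $\{5,5\}$. When $S_s$ is of type $(4)$, Case (v) shows $R_t$ must again be of type $(4)$, giving $\{4,4\}$. The union of these lists is exactly the nine unordered pairs in the statement, and the equality of degrees asserted for $\{3,5\}$ and $\{5,5\}$ is recorded verbatim at the ends of Cases (iii), (iv) and (v).

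Two things need genuine care rather than routine checking. The first is completeness of each matching: one must be sure the commutation relations really do exclude any $R_t$ outside the Bass--Meisters list, in particular that the degenerate possibilities---a constant polynomial $p$, a vanishing exponent, the coincidence $a=b$, or the dichotomy $b/a\in\mathbb{Q}^{*}$ versus $b/a\notin\mathbb{Q}$ in the type-$(4)$ situation---do not create a new kind of subgroup; each of these is disposed of within the corresponding case, frequently by observing that the candidate either coincides with $S_s$ or is conjugate to one already on the list. The second, and the real obstacle, is simultaneity: a priori one change of variables normalises $S_s$ and a \emph{different} one normalises $R_t$, and the latter could spoil the former. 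This is settled by the explicit verifications in Case (ii) that the auxiliary translation and elementary map used there fix $S_s$, and---most substantially---by the argument of Case (v) together with Remark~\ref{type 4}, in which the conjugator $H$ in $R_t=H\circ Q_t\circ H^{-1}$ is shown to be replaceable by its extremal-weight component, a weighted homogeneous polynomial automorphism commuting with $S_s$, so that $H$ itself normalises both subgroups at once. Granting these two points, the theorem follows by assembling the cases; and, as Remark~\ref{type 4} observes, nothing in the type-$(4)$ argument uses that the conjugators are polynomial, which is exactly why the same bookkeeping will transfer to the diagonalizable setting invoked in the next section.
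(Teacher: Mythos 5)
Your proposal is correct and coincides with the paper's own proof: the theorem is established there exactly as you describe, by assembling the outcomes of Cases (i)--(v) of Section 5 (the \qed at the end of Case (v) closes the argument), with the simultaneity of the normalization checked within each case and, for the type-$(4)$ pairs, via the extremal-weight analysis of the conjugator $H$ recorded in Remark \ref{type 4}. The only cosmetic slip is that Case (v) does not \emph{show} $R_t$ must be of type $(4)$; rather, as you note elsewhere, the mixed pairs $\{j,4\}$ with $j\neq 4$ are already accounted for by normalizing the type-$j$ member first, so only the $\{4,4\}$ case remains to be treated there.
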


Combined with theorem \ref{dim2}, this gives the 
\begin{cor} \label{nrmlfrm3}
The admissible normal forms out of those in the above theorem, for a pair of commuting one parameter groups arising as subgroups of ${\rm Aut}(\Omega)$ for some model domain $\Omega \subset \mathbb{C}^3$ and lying in the normalizer of its canonical subgroup are given by the first five pairs therein. 
\end{cor}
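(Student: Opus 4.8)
The plan is to deduce the corollary from Theorem~\ref{nrmlfrm2} together with the two exclusion arguments already carried out inside the proof of Theorem~\ref{dim2}. Let $\{S_s\}$ and $\{R_t\}$ be commuting one-parameter subgroups of $\mathrm{Aut}(\Omega)$, both lying in the normalizer $N$ of the canonical subgroup $\{T_t\}$. We may assume each is distinct from $\{T_t\}$: if, say, $\{S_s\}\subseteq\{T_t\}$, then its projection ${}'S_s$ is the identity of $\mathbb{C}^2$, which is not conjugate to any of the (non-trivial) normal forms of Theorem~\ref{nrmlfrm}, so such a pair falls outside the list of Theorem~\ref{nrmlfrm2} and there is nothing to classify. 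As was observed at the start of the treatment of Theorem~\ref{dim2}, $N$ coincides with the centralizer of $\{T_t\}$, and for $g\in N$ the first two components of $g$ are independent of $z_3$; hence $g\mapsto{}'g$ is a homomorphism of $N$ into $GA_2(\mathbb{C})$, so that ${}'S_s$ and ${}'R_t$ are commuting one-parameter subgroups of $GA_2(\mathbb{C})$. Theorem~\ref{nrmlfrm2} then applies and, after a single change of variables in $\mathbb{C}^2$, simultaneously puts the pair $({}'S_s,{}'R_t)$ into one of the nine listed unordered pairs.

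It remains to discard the four pairs $\{2b,3\}$, $\{3,3\}$, $\{3,5\}$, $\{5,5\}$ -- equivalently, every pair in which at least one member is conjugate to a subgroup of type $(3)$ or type $(5)$ in the sense of Theorem~\ref{nrmlfrm}. This is precisely what the proof of Theorem~\ref{dim2} establishes. Case (i) there shows that if ${}'S_s$ were conjugate to a type $(5)$ subgroup $(z_1,z_2)\mapsto\big(e^{as}z_1,e^{ads}(z_2+sz_1^d)\big)$, then setting $z_2=0$ in $P\circ{}'S_s=P$ and comparing coefficients of the powers of $z_1,\overline z_1$ forces every monomial of $P$ involving $z_2$ or $\overline z_2$ to vanish, contradicting the finite-type assumption on $\Omega$; Case (iv) shows that if ${}'S_s$ were conjugate to a type $(3)$ subgroup $(z_1,z_2)\mapsto\big(z_1,z_2+p(z_1)s\big)$, then after passing to an equivalent domain and invoking Lemma~\ref{useful} one gets $P(z_1,z_2)=\sum_j b_j(z_1,\overline z_1)\big(\Im z_2\,\overline{p(z_1)}\big)^j$, so that the complex line through any root $z_1^0$ of $p$ lies in $\partial\Omega$, again contradicting finite type. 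Both arguments use only that $S_s\in\mathrm{Aut}(\Omega)$ lies in $N$ and is distinct from $\{T_t\}$, and they apply verbatim to ${}'R_t$.

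Consequently neither ${}'S_s$ nor ${}'R_t$ is conjugate to a type $(3)$ or type $(5)$ subgroup, so the simultaneously-normalized pair is none of $\{2b,3\},\{3,3\},\{3,5\},\{5,5\}$; of the nine pairs of Theorem~\ref{nrmlfrm2} exactly $\{1,1\},\{1,2\},\{1,4\},\{2,2\},\{4,4\}$ remain, and these are the first five. I do not expect a genuine obstacle: the substance is already contained in Theorems~\ref{dim2} and~\ref{nrmlfrm2}, and the work here is the bookkeeping above, together with the two minor points of care -- that $g\mapsto{}'g$ is a homomorphism on $N$ so that commutativity descends to $GA_2(\mathbb{C})$, and that a member of the pair equal to $\{T_t\}$ must be set aside since it projects to the identity of $\mathbb{C}^2$ and hence lies outside the scope of the normal-form list.
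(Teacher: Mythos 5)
Your proposal is correct and follows exactly the route the paper intends: the paper gives no separate proof but simply states that Theorem \ref{nrmlfrm2} "combined with theorem \ref{dim2}" yields the corollary, and your write-up makes explicit that cases (i) and (iv) of the proof of Theorem \ref{dim2} exclude types (5) and (3) respectively for any subgroup of the normalizer distinct from $\{T_t\}$, which discards precisely the last four pairs. The two points of care you flag (that $g\mapsto{}'g$ is a homomorphism on the normalizer, and that a factor equal to $\{T_t\}$ projects to the identity and is set aside) are sensible bookkeeping consistent with the paper's setup.
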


\section{Models when ${\rm dim}(G)=3$ -- Proof of theorem \ref{dim3}}
It is now time to refine our reductions on the form of $P$ obtained in section $3$, when only one extra dimension to $\mathsf{N}$, the normalizer of the canonical generator, was given. Suppose now that $S_s$ and $R_t$ are two commuting one parameter subgroups of $G$ with linearly independent infinitesimal generators and lying in $\mathsf{N}$. To obtain optimal refinements, we would like to make a change of coordinates that transforms both the subgroups into their normal forms simultaneously. For instance, we know that when $'S_s,'R_t$ are each conjugate to a one parameter family of translations in the $\Re z_1$ direction say, we can pass to  equivalent domains whose defining polynomials are of the form $2\Re z_3 + P(\Im z_1,z_2)$ but when these are different subgroups, i.e., with linearly independent generators, theorem \ref{dim2} does not guarantee that both the subgroups get transformed to their normal forms; we want to know if we can change variables to make both the symmetries apparent in (the same) $P$ i.e., to recast $P$ in the form $P(z_1,z_2)=P(\Im z_1,\Im z_2)$. Corollary (\ref{nrmlfrm3}) assures this for all the cases and the sought for characterization of model domains by their automorphism groups as in theorem \ref{dim3}, now follows easily -- case (iii) in that theorem being a consequence of case (iv) of theorem \ref{dim2}, according to which $P$ must be balanced with respect to both $\Delta = (\delta_1,\delta)$ and $\Gamma =(\gamma_1,\gamma_2)$, the parameters involved in the rotation subgroups $(z_1,z_2,z_3) \to (e^{i \delta_1 s} z_1, e^{i \delta_2 s}z_2, z_3)$ and $(z_1,z_2,z_3) \to (e^{i \gamma_1 t}z_1,e^{i \gamma_2 t}z_2,z_3)$, conjugate to the two different subgroups $S_s$ and $R_t$ of $G$, corresponding to its two factors of $\mathbb{S}^1$. But then if $M(z_1,z_2)$ were non-zero, it is easily seen that this forces $\Delta$ and $\Gamma$ to be proportional (with the proportionality constant being rational), contradicting the linear independence of the infinitesimal generators of $S_s$ and $R_t$.
\qed\\

We may rephrase the arguments above to draw another 
\begin{cor}
Let $\Omega_j = \big\{ z \in \mathbb{C}^3 \; : \; 2 \Re z_3 + Q_j(z_1,z_2)<0 \big\}$ where $j=1,2$ be model domains and $F: \Omega_1 \to \Omega_2$ is a biholomorphism preserving their canonical subgroups. Then $F$ must be a polynomial automorphism of $\mathbb{C}^3$.\\
Suppose further that $\Omega_1, \Omega_2$ are strictly  but non-extremely balanced domains with respect to the weights $\Delta=(\delta_1,\delta_2)$ and $\Gamma= (\gamma_1,\gamma_2)$ where $\delta_j,\gamma_j\in \mathbb{R}$, so that ${\rm Aut}(\Omega_1)$ and ${\rm Aut}(\Omega_2)$ contain the special rotation groups 
\[
S_s(z)=(e^{i\delta_1 t}z_1, e^{i\delta_2 t}z_2,z_3)
\]
and 
\[
R_t(z)=(e^{i\gamma_1 t}z_1,e^{i\gamma_2 t}z_2,z_3)
\]
respectively and that $F$ pulls back $R_t$ to a subgroup that commutes with $S_s$ and $F('0,-1)=('0,-1)$. Then $\{ \delta_1,\delta_2 \}= \mu \{ \gamma_1, \gamma_2 \}$ for some $\mu \in \mathbb{Q}^*$ and $F$ is a weighted homogeneous polynomial automorphism of $\mathbb{C}^3$ (after composing with the flip $f(z_1,z_2) = (z_2,z_1)$ if necessary) with respect to $\Delta$ and $\Gamma$. Further, the weights of the homogeneous components with respect to either of these weights in $Q_1$ are multiples of the weights of the homogeneous components in $Q_2$ by some fixed number. 
\end{cor}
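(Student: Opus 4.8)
The argument naturally divides along the two assertions of the corollary. For the first, that $F$ is a polynomial automorphism of $\mathbb{C}^3$, I would simply rerun the computation of Sections 2 and 3. The hypothesis that $F$ carries the canonical subgroup of $\Omega_1$ to that of $\Omega_2$ means $F\circ T^1_t=T^2_{f(t)}\circ F$ for some additive $f:\mathbb{R}\to\mathbb{R}$; differentiating in $z_3$ as in (\ref{2.1}) forces $F_1,F_2$ to be independent of $z_3$ and $F_3(z_1,z_2,z_3)=az_3+\phi(z_1,z_2)$ with $f(t)=at$. Because both $\Omega_1$ and $\Omega_2$ surject onto $\mathbb{C}^2_{z_1,z_2}$, the maps $F_1,F_2$ are entire, so the Jacobian argument of the introduction shows ${\rm Jac}(F)$ is nowhere zero and ${}'F\in{\rm Aut}(\mathbb{C}^2)$; then Theorem 1.2 of \cite{CP} (in the form, explained at the end of Section 2, that does not use pseudoconvexity) together with the bound on roots of a monic polynomial gives that $F_1,F_2$ are polynomial and ${}'F\in GA_2(\mathbb{C})$. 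Comparing $2\Re F_3+Q_2({}'F)=\lambda(2\Re z_3+Q_1)$ on $\partial\Omega_1$, as in (\ref{2.2})--(\ref{2.3}), shows $a=\lambda>0$ is real; evaluating the resulting identity $2\Re\phi({}'z)+Q_2({}'F({}'z))=aQ_1({}'z)$ at the origin and using $F('0,-1)=('0,-1)$, hence ${}'F(0)=0$, then pins $a=1$, $\phi(0)=0$, and, in the reduced form of $Q_1,Q_2$, even $F_3=z_3$ and $Q_1=Q_2\circ{}'F$.

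For the second assertion, set $\widetilde R_t:=F^{-1}\circ R_t\circ F$, a one-parameter subgroup of ${\rm Aut}(\Omega_1)$, which by hypothesis commutes with $S_s$. Since $R_t$ acts trivially on $z_3$ and $F$ has the form just determined, its first two components are ${}'\widetilde R_t={}'F^{-1}\circ{}'R_t\circ{}'F$, a one-parameter subgroup of $GA_2(\mathbb{C})$ commuting with ${}'S_s$; being $GA_2(\mathbb{C})$-conjugate to the diagonal rotation ${}'R_t$, it is of type $(4)$ by the uniqueness in Theorem \ref{nrmlfrm}. As ${}'S_s(z_1,z_2)=(e^{i\delta_1 s}z_1,e^{i\delta_2 s}z_2)$ is already in the normal form $(4)$, I would feed the commuting pair $\{{}'S_s,{}'\widetilde R_t\}$ into the analysis of Case (v) of Section 4 with $H={}'F^{-1}$ and $Q_t={}'R_t$. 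That analysis --- which by Remark \ref{type 4} never used polynomiality of the conjugator --- shows $H$ fixes the origin, $DH(0)$ is diagonal or anti-diagonal, and, after composing with the flip $f(z_1,z_2)=(z_2,z_1)$ if necessary, $H$ and hence ${}'F$ is weighted homogeneous with respect to $\Delta$ with the weights of its two components equal to $\delta_1,\delta_2$, the genuinely nonlinear alternative occurring only when $\delta_2/\delta_1\in\mathbb{Q}^*$. Applying the mirror-image argument on the $\Omega_2$-side to $FS_sF^{-1}\in{\rm Aut}(\Omega_2)$, which commutes with $R_t$, gives likewise that ${}'F$ is weighted homogeneous with respect to $\Gamma$ with component weights $\gamma_1,\gamma_2$.

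To obtain $\{\delta_1,\delta_2\}=\mu\{\gamma_1,\gamma_2\}$ I would combine two facts. First, ${}'\widetilde R_t\in{\rm Aut}(\Omega_1)$ is of type $(4)$, so by Theorem \ref{dim2}(a)(iv) (valid, as remarked there, for subgroups of the normalizer of the canonical subgroup) $Q_1$ is balanced with respect to the weight of ${}'\widetilde R_t$; it is also balanced with respect to $\Delta$ by construction. Second, $\Omega_1$ is non-extremely balanced, so $M\not\equiv0$ and some mixed monomial in $M$ is balanced in neither variable; the two linear relations this monomial satisfies force $\Delta$, the weight of ${}'\widetilde R_t$, and (running the same on the $\Omega_2$-side) $\Gamma$ to be pairwise proportional as unordered pairs, with the common ratio $\delta_2/\delta_1=\gamma_2/\gamma_1$ rational, which after the customary normalization of weights reads $\{\delta_1,\delta_2\}=\mu\{\gamma_1,\gamma_2\}$ with $\mu\in\mathbb{Q}^*$. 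Finally, $Q_1=Q_2\circ{}'F$ modulo pluriharmonic terms (which cancel in the reduced forms), and since ${}'F$ is weighted homogeneous with component weights equal to the variable weights, composition with ${}'F$ preserves the weighted-homogeneous grading; hence the weighted-homogeneous decompositions of $Q_1$ and $Q_2$ correspond termwise, their weights with respect to $\Delta$ (resp. $\Gamma$) differing only by the proportionality constant relating the two weights, which is the last claim.

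The step I expect to be the main obstacle is the careful invocation of Case (v) of Section 4: one must check that the conclusions there apply to the \emph{given} conjugator ${}'F$ rather than to some conjugator merely produced by the classification, keep track of the dichotomy between ${}'F$ linear and ${}'F$ nonlinear (and in the latter case that the lowest-weight component of ${}'F$ is pinned to have component weights exactly $(\delta_1,\delta_2)$), and rule out the anti-diagonal possibility for $DH(0)$ whenever it is incompatible with the normal forms of $S_s$ and $R_t$. These are precisely the bookkeeping situations already confronted in Sections 2 and 4, so the remaining work is organizational rather than conceptual; the auxiliary normalizations ($a=1$, the reduced form of $Q_j$, the scaling of $\Delta$ and $\Gamma$, and the flip) must all be shown to be mutually compatible.
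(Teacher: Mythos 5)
Your proposal is correct and follows essentially the same route as the paper: the canonical-subgroup equivariance plus the algebraicity/surjectivity arguments of Sections 2--3 give $F\in GA_3(\mathbb{C})$; the comparison of defining functions together with $F('0,-1)=('0,-1)$ and the absence of pluriharmonic terms pins $F_3=z_3$ and $Q_1=Q_2\circ{}'F$; and the simultaneous-normalization analysis of Case (v) of Section 4 (via Remark \ref{type 4}) combined with the non-extreme balancedness yields $\Delta=\mu\Gamma$, the weighted homogeneity of $'F$, and the scaling of the weights of the homogeneous components. The only differences are organizational (you invoke Theorem \ref{dim2}(a)(iv) explicitly where the paper points back to the proof of Theorem \ref{dim3}), not substantive.
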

\noindent To see this, the components of $F$ satisfy (\ref{1.1}) and as in sections $1$ and $2$, $'F$ is independent of $z_3$ and is a polynomial mapping while the third component is of the form 
\[
F_3(z_1,z_2,z_3) = az_3 + \phi(z_1,z_2).
\]
for some holomorphic function $\phi$. Since $\Omega_1$ surjects onto $\mathbb{C}^2$ and $F$ is algebraic, as mentioned in sections $1,2$, we have by the same arguments that $\phi({}'z) = F_3(z) - az_3$ is an entire algebraic function, hence a polynomial and $F$ a polynomial mapping. Now the jacobian of $F$ is ${\rm Jac}(F)= a{\rm Jac}('F)$ (so, $a \neq 0$) which is a function of $'z$ alone and therefore invariant under translations in the $z_3$-direction; so if ${\rm Jac}(F) \neq \phi$ then it will intersect $\Omega_1$ to contradict that $F$ is a biholomorphism. Thus, $F \in GA_3(\mathbb{C})$.\\

\noindent Moving onto the case when $\Omega_j$'s are balanced as in the assertion, we then have by remark \ref{type 4}, since $'F$ has now been ascertained to be a polynomial automorphism of $\mathbb{C}^2$, that $'F$ (after a composition with $f$ if necessary) must be weighted homogeneous either with respect to $\Delta$ with the weights of its components $F_1,F_2$ being $\delta_1,\delta_2$ or the same holds with $\Delta$ replaced by $\Gamma$.\\

\noindent Next, we have by comparing the two defining functions for $\Omega_1$ we have for some real analytic $\psi$ near the origin,
\begin{equation}
2 \Re F_3 + Q_2(F_1,F_2) = \psi(z) \Big(2 \Re z_3 + Q_1(z_1,z_2) \Big).
\end{equation} 
Comparing terms involving $z_3$ alone, we have that $\psi(z) \equiv a \in \mathbb{R}^*$ and subsequently that 
\[
2 \Re \phi(z_1,z_2) + Q_2(F_1,F_2) = a Q_1(z_1,z_2).
\]
Since $\Omega_1$ is strictly balanced, $Q_1$ cannot have pluriharmonic terms, and the same holds of $Q_2 \circ F$ as well, since $'F('0)={}'0$. Thus, $\phi$ must be an imaginary constant. Then, $-1=F_3('0,-1)=-a + \phi('0)$ shows that $a=1$ and $\phi \equiv 0$. So $F_3(z)=z_3$.  Now, by the proof in section $5$, we know that $'F^{-1} \circ {}'R_t \circ {}'F$ must be in normal form since it commutes with $S_s$ (which is already in normal form), so as in the foregoing proof, $\Delta=\mu \Gamma$ for some $\mu \in \mathbb{Q}^*$. In particular then $F$ is weighted homogeneous with respect to both $\Delta, \Gamma$ and the ratio of the weight of $F_2$ to $F_1$ is $\gamma_2/\gamma_1 = \delta_2/ \delta_1$.\\
Now we have $Q_2 \circ {}'F = Q_1$. If the weighted homogeneous expansion of $Q_2$ with respect to $\Gamma$ say, is
\[
Q_2= Q_2^{(\nu_1)} + Q_2^{(\nu_2)} + \ldots,
\]
where $0 < \nu_1 < \nu_2<\ldots$ and $F_1,F_2$ are of weights $\eta \gamma_1,\eta\gamma_2$ respectively, then note that $Q_2^{(\nu_k)} \circ F$ is weighted homogeneous of weight $ \eta\nu_k$ -- for instance if $m=cz_1^{j_1}z_2^{j_2}\ov{z}_1^{k_1}\ov{z}_2^{k_2}$ is a monomial in $Q_2^{(\nu_k)}$, then
\[
m \circ F= c F_1^{j_1}F_2^{j_2}\ov{F}_1^{k_1}\ov{F}_2^{k_2}
\]
is weighted homogeneous of weight $(j_1+k_1) \eta \gamma_1 + (j_2 + k_2)\eta \gamma_2 = \eta {\rm wt}_{\Gamma}(m) = \eta \nu_k$. Therefore, the set of all possible weights of monomials in $Q_2 \circ {}'F=Q_1$ are multiples of the weights $\nu_k$ occurring the weighted homogeneous expansion of $Q_2$ by $\eta$.\\

\noindent Finally we note by remark \ref{type 4} that the corollary holds in higher dimensions as well.
\qed

\end{document}